\providecommand{\tabularnewline}{\\}
\def\RSsubtxt{section~}\newref{sub}{name = \RSsubtxt}}
\def\RSthmtxt{theorem~}\newref{thm}{name = \RSthmtxt}}
\def\RSlemtxt{lemma~}\newref{lem}{name = \RSlemtxt}}
\numberwithin{equation}{section}
\numberwithin{table}{section}
\numberwithin{figure}{section}
  \theoremstyle{remark}
  \newtheorem*{note*}{\protect\notename}
 \theoremstyle{definition}
 \newtheorem{model}{\protect\modelname}
  \theoremstyle{plain}
  \newtheorem{assumption}{\protect\assumptionname}
  \theoremstyle{remark}
  \newtheorem{rem}{\protect\remarkname}[section]
  \theoremstyle{plain}
  \newtheorem{prop}{\protect\propositionname}[section]
  \theoremstyle{plain}
  \newtheorem{thm}{\protect\theoremname}[section]
  \theoremstyle{plain}
  \newtheorem{lem}{\protect\lemmaname}[section]
  \theoremstyle{plain}
  \newtheorem{cor}{\protect\corollaryname}[section]
\providecommand{\LyX}{L\kern-.0467em\lower.25em\hbox{Y}\kern-.025emX\@}
\newcommand \cellfill {\leavevmode \leaders \hb@xt@ .44em{\hss .\hss }\hfill \kern \z@}
\newcommand\smaller[2][0.85]{{\scalefont{#1}#2}}
  \providecommand{\assumptionname}{Assumption}
  \providecommand{\lemmaname}{Lemma}
  \providecommand{\notename}{Note}
  \providecommand{\propositionname}{Proposition}
  \providecommand{\remarkname}{Remark}
 \providecommand{\modelname}{Model}
\providecommand{\corollaryname}{Corollary}
\providecommand{\theoremname}{Theorem}
\begin{document}
\selectlanguage{english}%
% Macros, Version 2
% Adopted: 2012-06-19

% FORMATTING

\global\long\def\uwrite#1#2{\underset{#2}{\underbrace{#1}} }

\global\long\def\blw#1{\ensuremath{\underline{#1}}}

\global\long\def\abv#1{\ensuremath{\overline{#1}}}

\global\long\def\vect#1{\mathbf{#1}}

% SETS AND SEQUENCES

\global\long\def\smlseq#1{\{#1\} }

\global\long\def\seq#1{\left\{  #1\right\}  }

\global\long\def\smlsetof#1#2{\{#1\mid#2\} }

\global\long\def\setof#1#2{\left\{  #1\mid#2\right\}  }

% LIMITS

\global\long\def\goesto{\ensuremath{\rightarrow}}

\global\long\def\ngoesto{\ensuremath{\nrightarrow}}

\global\long\def\uto{\ensuremath{\uparrow}}

\global\long\def\dto{\ensuremath{\downarrow}}

\global\long\def\uuto{\ensuremath{\upuparrows}}

\global\long\def\ddto{\ensuremath{\downdownarrows}}

\global\long\def\ulrto{\ensuremath{\nearrow}}

\global\long\def\dlrto{\ensuremath{\searrow}}

% FUNCTIONS AND FUNCTION SPACES

\global\long\def\setmap{\ensuremath{\rightarrow}}

\global\long\def\elmap{\ensuremath{\mapsto}}

\global\long\def\compose{\ensuremath{\circ}}

\global\long\def\cont{C}

\global\long\def\cadlag{D}

\global\long\def\Ellp#1{\ensuremath{\mathcal{L}^{#1}}}

% SETS OF NUMBERS

\global\long\def\naturals{\ensuremath{\mathbb{N}}}

\global\long\def\reals{\mathbb{R}}

\global\long\def\complex{\mathbb{C}}

\global\long\def\rationals{\mathbb{Q}}

\global\long\def\integers{\mathbb{Z}}

% NORMS, MODULI, INNER PRODUCTS

\global\long\def\abs#1{\ensuremath{\left|#1\right|}}

\global\long\def\smlabs#1{\ensuremath{\lvert#1\rvert}}
 \global\long\def\bigabs#1{\ensuremath{\bigl|#1\bigr|}}
 \global\long\def\Bigabs#1{\ensuremath{\Bigl|#1\Bigr|}}
 \global\long\def\biggabs#1{\ensuremath{\biggl|#1\biggr|}}

\global\long\def\norm#1{\ensuremath{\left\Vert #1\right\Vert }}

\global\long\def\smlnorm#1{\ensuremath{\lVert#1\rVert}}
 \global\long\def\bignorm#1{\ensuremath{\bigl\|#1\bigr\|}}
 \global\long\def\Bignorm#1{\ensuremath{\Bigl\|#1\Bigr\|}}
 \global\long\def\biggnorm#1{\ensuremath{\biggl\|#1\biggr\|}}

% SET OPERATIONS

\global\long\def\Union{\ensuremath{\bigcup}}

\global\long\def\Intsect{\ensuremath{\bigcap}}

\global\long\def\union{\ensuremath{\cup}}

\global\long\def\intsect{\ensuremath{\cap}}

\global\long\def\pset{\ensuremath{\mathcal{P}}}

\global\long\def\clsr#1{\ensuremath{\overline{#1}}}

\global\long\def\symd{\ensuremath{\Delta}}

\global\long\def\intr{\operatorname{int}}

\global\long\def\cprod{\otimes}

\global\long\def\Cprod{\bigotimes}

% HILBERT SPACES

\global\long\def\smlinprd#1#2{\ensuremath{\langle#1,#2\rangle}}

\global\long\def\inprd#1#2{\ensuremath{\left\langle #1,#2\right\rangle }}

\global\long\def\orthog{\ensuremath{\perp}}

\global\long\def\dirsum{\ensuremath{\oplus}}

% LINEAR ALGEBRA

\global\long\def\spn{\operatorname{sp}}

\global\long\def\rank{\operatorname{rk}}

\global\long\def\proj{\operatorname{proj}}

\global\long\def\tr{\operatorname{tr}}

% PROBABILITY SPACES AND SIGMA-FIELDS

\global\long\def\smpl{\ensuremath{\Omega}}

\global\long\def\elsmp{\ensuremath{\omega}}

\global\long\def\sigf#1{\mathcal{#1}}

\global\long\def\sigfield{\ensuremath{\mathcal{F}}}
\global\long\def\sigfieldg{\ensuremath{\mathcal{G}}}

\global\long\def\flt#1{\mathcal{#1}}

\global\long\def\filt{\mathcal{F}}
\global\long\def\filtg{\mathcal{G}}

\global\long\def\Borel{\ensuremath{\mathcal{B}}}

\global\long\def\cyl{\ensuremath{\mathcal{C}}}

\global\long\def\nulls{\ensuremath{\mathcal{N}}}

\global\long\def\gauss{\mathfrak{g}}

\global\long\def\leb{\mathfrak{m}}

% Probability and expectation

\global\long\def\prob{P}

\global\long\def\Prob{\ensuremath{\mathbb{P}}}

\global\long\def\Probs{\mathcal{P}}

\global\long\def\PROBS{\mathcal{M}}

\global\long\def\expect{\ensuremath{\mathbb{E}}}

\global\long\def\probspc{\ensuremath{(\smpl,\filt,\Prob)}}

% Distributions and stochastic convergence

\global\long\def\iid{\ensuremath{\textnormal{i.i.d.}}}

\global\long\def\as{\ensuremath{\textnormal{a.s.}}}

\global\long\def\asp{\ensuremath{\textnormal{a.s.p.}}}

\global\long\def\io{\ensuremath{\ensuremath{\textnormal{i.o.}}}}

\newcommand\independent{\protect\mathpalette{\protect\independenT}{\perp}}
\def\independenT#1#2{\mathrel{\rlap{$#1#2$}\mkern2mu{#1#2}}}

\global\long\def\indep{\independent}

\global\long\def\distrib{\ensuremath{\sim}}

\global\long\def\distiid{\ensuremath{\sim_{\iid}}}

\global\long\def\asydist{\ensuremath{\overset{a}{\distrib}}}

\global\long\def\inprob{\ensuremath{\overset{p}{\goesto}}}

\global\long\def\inprobu#1{\ensuremath{\overset{#1}{\goesto}}}

\global\long\def\inas{\ensuremath{\overset{\as}{\goesto}}}

\global\long\def\eqas{=_{\as}}

\global\long\def\inLp#1{\ensuremath{\overset{\Ellp{#1}}{\goesto}}}

\global\long\def\indist{\ensuremath{\overset{d}{\goesto}}}

\global\long\def\eqdist{=_{d}}

\global\long\def\wkc{\ensuremath{\rightsquigarrow}}

\global\long\def\wkcu#1{\overset{#1}{\ensuremath{\rightsquigarrow}}}

\global\long\def\plim{\operatorname*{plim}}

% Moments

\global\long\def\var{\operatorname{var}}

\global\long\def\lrvar{\operatorname{lrvar}}

\global\long\def\cov{\operatorname{cov}}

\global\long\def\corr{\operatorname{corr}}

\global\long\def\bias{\operatorname{bias}}

\global\long\def\MSE{\operatorname{MSE}}

\global\long\def\med{\operatorname{med}}

% STOCHASTIC PROCESSES

\global\long\def\simple{\mathcal{R}}

\global\long\def\sring{\mathcal{A}}

\global\long\def\sproc{\mathcal{H}}

\global\long\def\Wiener{\ensuremath{\mathbb{W}}}

\global\long\def\sint{\bullet}

\global\long\def\cv#1{\left\langle #1\right\rangle }

\global\long\def\smlcv#1{\langle#1\rangle}

\global\long\def\qv#1{\left[#1\right]}

\global\long\def\smlqv#1{[#1]}

% MISCELLANEOUS

\global\long\def\trans{\ensuremath{\prime}}

\global\long\def\indic{\ensuremath{\mathbf{1}}}

\global\long\def\Lagr{\mathcal{L}}

\global\long\def\grad{\nabla}

\global\long\def\pmin{\ensuremath{\wedge}}
\global\long\def\Pmin{\ensuremath{\bigwedge}}

\global\long\def\pmax{\ensuremath{\vee}}
\global\long\def\Pmax{\ensuremath{\bigvee}}

\global\long\def\sgn{\operatorname{sgn}}

\global\long\def\argmin{\operatorname*{argmin}}

\global\long\def\argmax{\operatorname*{argmax}}

\global\long\def\Rp{\operatorname{Re}}

\global\long\def\Ip{\operatorname{Im}}

\global\long\def\deriv{\ensuremath{\mathrm{d}}}

\global\long\def\diffnspc{\ensuremath{\deriv}}

\global\long\def\diff{\ensuremath{\,\deriv}}

\global\long\def\i{\ensuremath{\mathrm{i}}}

\global\long\def\e{\mathrm{e}}

\global\long\def\sep{,\ }

\global\long\def\defeq{\coloneqq}

\global\long\def\eqdef{\eqqcolon}

%% CROSS REFERENCING

\begin{comment}
\global\long\def\objlabel#1#2{}

\global\long\def\objref#1{}
\end{comment}
\selectlanguage{american}%

\selectlanguage{english}%
\global\long\def\S{\mathcal{S}}

\global\long\def\M{\mathcal{M}}

\global\long\def\N{\mathcal{N}}

\global\long\def\V{\mathcal{V}}

\global\long\def\U{\mathcal{U}}

\global\long\def\R{\mathcal{R}}

\global\long\def\mg{\xi}

\global\long\def\minidx{k_{0}}

\global\long\def\minidxm{m_{0}}

\global\long\def\smltail#1{\smlnorm{#1}_{\omega}}

\global\long\def\alphnorm#1#2{\norm{#1}_{[#2]}}

\global\long\def\smlalph#1#2{\smlnorm{#1}_{[#2]}}

\global\long\def\smlmom#1#2{\smlnorm{#1}_{#2}}

\newcommandx\RV[1][usedefault, addprefix=\global, 1=]{\mathrm{RV}(#1)}

\global\long\def\BI{\mathrm{BI}}

\global\long\def\BInorm#1{\smlnorm{#1}_{\BI}}

\global\long\def\BIalph#1{\mathrm{BI}_{[#1]}}

\global\long\def\BILalph#1{\mathrm{BIL}_{[#1]}}

\global\long\def\BImom#1{\mathrm{BI}_{#1}}

\global\long\def\BILmom#1{\mathrm{BIL}_{#1}}

\global\long\def\BIz{\mathrm{BI_{0}}}

\global\long\def\BIc{\mathrm{BIC}}

\global\long\def\BIcz{\mathrm{BIC}_{0}}

\global\long\def\BIl{\mathrm{BIL}}

\global\long\def\BIlz{\mathrm{BIL}_{0}}

\global\long\def\Lip{\mathrm{Lip}}

\global\long\def\Reg{\mathrm{BI_{R}}}

\global\long\def\locest{\mathcal{L}_{n}}

\global\long\def\loctime{\mathcal{L}}

\global\long\def\lmest{\mathcal{\mu}_{n}}

\global\long\def\lmeas{\mathcal{\mu}}

\global\long\def\minpr{\mathfrak{m}}

\global\long\def\maxpr{\mathfrak{M}}

\global\long\def\boundpr{\mathfrak{U}}

\global\long\def\major#1{\mathcal{#1}}

\global\long\def\grid{\mathbb{G}}

\global\long\def\ucc{\mathrm{ucc}}

\global\long\def\Hspc{\mathscr{H}}

\global\long\def\Bspc{\mathscr{B}}

\global\long\def\Aspc{\mathscr{A}}

\global\long\def\Cspc{\mathscr{C}}

\global\long\def\gauss{\phi_{\mathfrak{g}}}

\global\long\def\Gauss{\Phi_{\mathfrak{g}}}

\global\long\def\cmpct{\varphi}

\global\long\def\Fset{\mathscr{F}}

\global\long\def\Gset{\mathscr{G}}

\global\long\def\Uset{\mathscr{U}}

\global\long\def\locp{\kappa}

\global\long\def\signal{S}

\global\long\def\cfidx{p_{0}}

\global\long\def\smlfloor#1{\lfloor#1\rfloor}

\global\long\def\smlceil#1{\lceil#1\rceil}

\global\long\def\bkt{[\,]}

\global\long\def\bktpower{\beta}

\global\long\def\ctrproc{\nu_{n}}

\newcommand{\SM}{\textnormal{(SM)}}

\newcommand{\AP}{\textnormal{(AP)}}

\newcommand{\LM}{\textnormal{(LM)}}

\newcommand{\IN}{\textnormal{(IN)}}

\newcommand{\CR}{\textnormal{(CR)}}

\global\long\def\diam{\operatorname{diam}}

\global\long\def\nseq{e}

\global\long\def\etamom{q_{0}}

\global\long\def\bandmax{r_{0}}

\global\long\def\isbigop{\lesssim_{p}}

\global\long\def\derivbnd#1{\abv m_{#1}}

\global\long\def\truncnorm#1{\smlnorm{#1}_{n}}

\global\long\def\intfnal{\iota}

\global\long\def\class#1{\mathcal{#1}}

\global\long\def\emp{\mathbb{G}}

\global\long\def\kernel{k}

\global\long\def\Kernel{K}

\global\long\def\spc#1{\mathscr{#1}}

\global\long\def\avg{\mu}

\global\long\def\Qrob{\mathbb{Q}}

\global\long\def\kprod{\mathbb{K}}

\global\long\def\lprod{\mathbb{L}}

\global\long\def\set#1{\mathcal{#1}}

\global\long\def\Beta{\mathrm{B}}

\global\long\def\vek{\operatorname{vec}}

\global\long\def\vekh{\operatorname{vech}}

\global\long\def\vidx{\nu}

\global\long\def\widx{\omega}

\global\long\def\err{\eta}

\global\long\def\a{\alpha}

\global\long\def\b{\beta}

\global\long\def\t{\theta}

\global\long\def\L{\mathcal{L}}

\global\long\def\l{\lambda}

\global\long\def\bind{\theta}

\global\long\def\Wald{\mathrm{W}}

\global\long\def\LR{\mathrm{LR}}

\global\long\def\LM{\mathrm{LM}}

\global\long\def\stat{T}

\global\long\def\Like{\mathcal{L}}

\global\long\def\like{\ell}

\global\long\def\trans{\mathsf{T}}

\selectlanguage{american}%
\newcommand\BZ{\textnormal{(B$$0$$)}}

\newcommand\GN{\textnormal{\smaller[0.76]{GN}}}

\newcommand\QN{\textnormal{\smaller[0.76]{QN}}}

\newcommand\TR{\textnormal{\smaller[0.76]{TR}}}

\selectlanguage{english}%
\global\long\def\gn{\mathrm{GN}}

\global\long\def\qn{\mathrm{QN}}

\global\long\def\tn{\mathrm{TR}}
\selectlanguage{american}%

\global\long\def\varmat{\mathrm{\Sigma}}

\global\long\def\covmat{\mathrm{R}}

\renewcommand\thepage{\roman{page}}

\thispagestyle{plain}

\title{Generalized Indirect Inference for Discrete Choice Models\thanks{The authors thank Debopam Battacharya, Martin Browning and Liang Chen
for helpful comments. Keane's work on this project has been funded
by the Australian Research Council under grant FL110100247. The manuscript
was prepared with \LyX{}~2.1.3 and JabRef~2.7b.}}

\author{Marianne Bruins\thanks{Nuffield College and Department of Economics, University of Oxford}\\
James A.\ Duffy\footnotemark[2] \thanks{Institute for New Economic Thinking at the Oxford Martin School}\\
Michael P.\ Keane\footnotemark[2]\\
Anthony A.\ Smith, Jr.\thanks{Department of Economics, Yale University and National Bureau of Economic
Research}}

\date{July 2015}
\maketitle
\begin{abstract}
This paper develops and implements a practical simulation-based method
for estimating dynamic discrete choice models. The method, which can
accommodate lagged dependent variables, serially correlated errors,
unobserved variables, and many alternatives, builds on the ideas of
indirect inference. The main difficulty in implementing indirect inference
in discrete choice models is that the objective surface is a step
function, rendering gradient-based optimization methods useless. To
overcome this obstacle, this paper shows how to smooth the objective
surface. The key idea is to use a smoothed function of the latent
utilities as the dependent variable in the auxiliary model. As the
smoothing parameter goes to zero, this function delivers the discrete
choice implied by the latent utilities, thereby guaranteeing consistency.
We establish conditions on the smoothing such that our estimator enjoys
the same limiting distribution as the indirect inference estimator,
while at the same time ensuring that the smoothing facilitates the
convergence of gradient-based optimization methods. A set of Monte
Carlo experiments shows that the method is fast, robust, and nearly
as efficient as maximum likelihood when the auxiliary model is sufficiently
rich. 
\end{abstract}
\vspace{\fill}
\begin{note*}
An earlier version of this paper was circulated as the unpublished
manuscript \citet{KS03mimeo}. That paper proposed the method of generalized
indirect inference (GII), but did not formally analyze its asymptotic
or computational properties. The present work, under the same title
but with two additional authors (Bruins and Duffy), rigorously establishes
the asymptotic and computational properties of GII. It is thus intended
to subsume the 2003 manuscript. Notably, the availability of the 2003
manuscript allowed GII to be used in numerous applied studies (see
\subref{proposal}), even though the statistical foundations of the
method had not been firmly established. The present paper provides
these foundations and fills this gap in the literature.
\end{note*}
\newpage{}

{\singlespacing\tableofcontents{}

}

\newpage{}

\setcounter{page}{1}

\renewcommand\thepage{\arabic{page}}

\section{Introduction}

Many economic models have the features that (i) given knowledge of
the model parameters, it is easy to simulate data from the model,
but (ii) estimation of the model parameters is extremely difficult.
Models with discrete outcomes or mixed discrete/continuous outcomes
commonly fall into this category. A good example is the multinomial
probit (MNP), in which an agent chooses from among several discrete
alternatives the one with the highest utility. Simulation of data
from the model is trivial: simply draw utilities for each alternative,
and assign to each agent the alternative that gives them the greatest
utility. But estimation of the MNP, via either maximum likelihood
(ML) or the method of moments (MOM), is quite difficult.

The source of the difficulty in estimating the MNP, as with many other
discrete choice models, is that, from the perspective of the econometrician,
the probability an agent chooses a particular alternative is a high-dimensional
integral over multiple stochastic terms (unobserved by the econometrician)
that affect utilities the agent assigns to each alternative. These
probability expressions must be evaluated many times in order to estimate
the model by ML or MOM. For many years econometricians worked on developing
fast simulation methods to evaluate choice probabilities in discrete
choice models (see \citealp{LM81ch}). It was only with the development
of fast and accurate smooth probability simulators that ML or MOM-based
estimation in these models became practical (see \citealp{McF89Ecta},
and \citealp{Keane94Ecta}).

A different approach to inference in discrete choice models is the
method of ``indirect inference.'' This approach (see \citealp{Smith90Thesis,Smith93JAE};
\citealp{GMR93JAE}; \citealp{GT96ET}), circumvents the need to construct
the choice probabilities generated by the economic model, because
it is not based on forming the likelihood or forming moments based
on choice frequencies. Rather, the idea of indirect inference (II)
is to choose a statistical model that provides a rich description
of the patterns in the data. This descriptive model is estimated on
both the actual observed data and on simulated data from the economic
model. Letting $\b$ denote the vector of parameters of the structural
economic model, the II estimator is that $\hat{\b}$ which makes the
simulated data ``look like'' the actual data---in the sense (defined
formally below) that the descriptive statistical model estimated on
the simulated data ``looks like'' that same model estimated on the
actual data. (The method of moments is thus a special case of II,
in which the descriptive statistical model corresponds to a vector
of moments.)

Indirect inference holds out the promise that it should be practical
to estimate any economic model from which it is practical to simulate
data, even if construction of the likelihood or population moments
implied by the model is very difficult or impossible. But this promise
has not been fully realized because of limitations in the II procedure
itself. It is very difficult to apply II to models that include discrete
(or discrete/continuous) outcomes for the following reason: small
changes in the structural parameters of such models will, in general,
cause the data simulated from the model to change discretely. Such
a discrete change causes the parameters of a descriptive model fit
to the simulated data to jump discretely, and these discontinuities
are inherited by the criterion function minimized by the II estimator. 

Thus, given discrete (or discrete/continuous) outcomes, the II estimator
cannot be implemented using gradient-based optimization methods. One
instead faces the difficult computational task of optimizing a $d_{\beta}$-dimensional
step function using much slower derivative-free methods. This is very
time-consuming and puts severe constraints on the size of the structural
models that can be feasibly estimated. Furthermore, even if estimates
can be obtained, one does not have derivatives available for calculating
standard errors.

In this paper we propose a ``generalized indirect inference'' (GII)
procedure to address this important problem (Sections~\ref{sec:gii}
and \ref{sec:refinements}). The key idea is to generalize the original
II method by applying two different descriptive statistical models
to the simulated and actual data. As long as the two descriptive models
share the same vector of pseudo-true parameter values (at least asymptotically),
the GII estimator based on minimizing the distance between the two
models is consistent, and will enjoy the same asymptotic distribution
as the II estimator. 

While the GII idea has wider applicability, here we focus on how it
can be used to resolve the problem of non-smooth objective functions
of II estimators in the case of discrete choice models. Specifically,
the model we apply to the simulated data does not fit the discrete
outcomes in that data. Rather, it fits a ``smoothed'' version of
the simulated data, in which discrete choice indicators are replaced
by smooth functions of the underlying continuous latent variables
that determine the model's discrete outcomes. In contrast, the model
we apply to the actual data is fit to observed discrete choices (obviously,
the underlying latent variables that generate actual agents' observed
choices are not seen by the econometrician). 

As the latent variables that enter the descriptive model applied to
the simulated data are smooth functions of the model parameters, the
non-smooth objective function problem is obviously resolved. However,
it remains to show that the GII estimator based on minimizing the
distance between these two models is consistent and asymptotically
normal. We show that, under certain conditions on the parameter regulating
the smoothing, the GII estimator has the same limiting distribution
as the II estimator, permitting inferences to be drawn in the usual
manner (\secref{asymptotics}).

Our theoretical analysis goes well beyond merely deriving the limiting
distribution of the minimizer of the GII criterion function. Rather,
in keeping with computational motivation of this paper, we show that
the proposed smoothing facilitates the convergence of derivative-based
optimizers, in the sense that the smoothing leads to a sample optimization
problem that is no more difficult than the corresponding population
problem, where the latter involves the minimization of a necessarily
smooth criterion (\secref{asymptotics}). We also provide a detailed
analysis of the convergence properties of selected line-search and
trust-region methods. Our results on the convergence of these derivative-based
optimizers seem to be new to the literature. (While our work here
is in some respects related to the theory of $k$-step estimators,
we depart significantly from that literature, for example by dropping
the usual requirement that the optimizations commence from the starting
values provided by some consistent initial estimator.)

Finally, we provide Monte Carlo evidence indicating that the GII procedure
performs well on a set of example models (\secref{monte}). We look
at some cases where simulated maximum likelihood (SML) is also feasible,
and show that efficiency losses relative to SML are small. We also
show how judicious choice of the descriptive (or auxiliary) model
is very important for the efficiency of the estimator. This is true
not only here, but for II more generally.

Proofs of the theoretical results stated in the paper are given in
Appendices~\ref{app:mainproofs}--\ref{app:ullnproof}. An index
of key notation appears in \appref{notation}. All limits are taken
as $n\goesto\infty$.

\section{The model}

\label{sec:model}

We first describe a class of discrete choice models that we shall
use as test cases for the estimation method that we develop in this
paper. As will become clear, however, the ideas underlying the method
could be applied to almost any conceivable model of discrete choice,
including models with mixed discrete/continuous outcomes, and even
models in which individuals' choices solve forward-looking dynamic
programming problems.

We henceforth focus mainly on panel data models with $n$ individuals,
each of whom selects a choice from a set of $J$ discrete alternatives
in each of $T$ time periods. Let $u_{itj}$ be the (latent) utility
that individual $i$ attaches to alternative $j$ in period $t$.
Without loss of generality, set the utility of alternative $J$ in
any period equal to $0$. In each period, each individual chooses
the alternative with the highest utility. Let $y_{itj}$ be equal
to $1$ if individual $i$ chooses alternative $j$ in period $t$
and be equal to $0$ otherwise. Define $u_{it}\defeq(u_{it1},\ldots,u_{it,J-1})$
and $y_{it}\defeq(y_{it1},\ldots,y_{it,J-1})$. The econometrician
observes the choices $\{y_{it}\}$ but not the latent utilities $\{u_{it}\}$.

The vector of latent utilities $u_{it}$ is assumed to follow a stochastic
process 
\begin{equation}
u_{it}=f(x_{it},y_{i,t-1},\ldots,y_{i,t-l},\epsilon_{it};\beta),\qquad t=1,\ldots,T,\label{eq:latentutil}
\end{equation}
where $x_{it}$ is a vector of exogenous variables.\footnote{The estimation method proposed in this paper can also accommodate
models in which the latent utilities in any given period depend on
lagged values of the latent utilities.} For each individual $i$, the vector of disturbances $\epsilon_{it}\defeq(\epsilon_{it1},\ldots,\epsilon_{it,J-1})$
follows a Markov process $\epsilon_{it}=g(\epsilon_{i,t-1},\eta_{it};\beta)$,
where $\{\eta_{it}\}_{t=1}^{T}$ is a sequence of i.i.d.\ random
vectors (of dimension $J-1$) having a specified distribution (which
does \emph{not} depend on $\beta$). The functions $f$ and $g$ depend
on a set of $k$ structural parameters $\b\in\Beta$. The sequences
$\{\eta_{it}\}_{t=1}^{T}$, $i=1,\ldots,n$, are independent across
individuals and independent of $x_{it}$ for all $i$ and $t$. The
initial values $\epsilon_{i0}$ and $y_{it}$, $t=0,-1,\ldots,1-l$,
are fixed exogenously.

Although the estimation method proposed in this paper can (in principle)
be applied to any model of this form, we focus on four special cases
of the general model. Three of these cases (Models \ref{mod:serialprobit},
\ref{mod:dynprobit}, and \ref{mod:trichotomous} below) can be feasibly
estimated using simulated maximum likelihood, allowing us to compare
its performance with that of the proposed method.
\begin{model}
\label{mod:serialprobit}$J=2$, $T>1$, and $u_{it}=bx_{it}+\epsilon_{it}$,
where $x_{it}$ is a scalar, $\epsilon_{it}=r\epsilon_{i,t-1}+\eta_{it}$,
$\eta_{it}\distiid N[0,1]$, and $\epsilon_{i0}=0$. This is a two-alternative
dynamic probit model with serially correlated errors; it has two unknown
parameters $b$ and $r$.
\end{model}
 
\begin{model}
\label{mod:dynprobit}$J=2$, $T>1$, and $u_{it}=b_{1}x_{it}+b_{2}y_{i,t-1}+\epsilon_{it}$,
where $x_{it}$ is a scalar and $\epsilon_{it}$ follows the same
process as in \modref{serialprobit}. The initial value $y_{i0}$
is set equal to 0. This is a two-alternative dynamic probit model
with serially correlated errors and a lagged dependent variable; it
has three unknown parameters $b_{1}$, $b_{2}$, and $r$.
\end{model}
 
\begin{model}
\label{mod:initialprob} Identical to \modref{dynprobit} except that
the econometrician does not observe the first $s<T$ of the individual's
choices. Thus there is an ``initial conditions'' problem (see \citealp{Heck81ch}).
\end{model}
 
\begin{model}
\label{mod:trichotomous}$J=3$, $T=1$, and the latent utilities
obey: 
\begin{align*}
u_{i1} & =b_{10}+b_{11}x_{i1}+b_{12}x_{i2}+\eta_{i1}\\
u_{i2} & =b_{20}+b_{21}x_{i1}+b_{22}x_{i3}+c_{1}\eta_{i1}+c_{2}\eta_{i2},
\end{align*}
where $(\eta_{i1},\eta_{i2})\distiid N[0,I_{2}]$. (Since $T=1$ in
this model, the time subscript has been omitted.) This is a static
three-alternative probit model; it has eight unknown parameters $\{b_{1k}\}_{k=0}^{2}$,
$\{b_{2k}\}_{k=0}^{2}$, $c_{1}$, and $c_{2}$.
\end{model}

The techniques developed in this paper may also be applied to models
with a mixture of discrete and continuous outcomes. A leading example
is the Heckman selection model:
\begin{model}
\label{mod:selection}A selection model with two equations: The first
equation determines an individual's wage and the second determines
his/her latent utility from working: 
\begin{align*}
w_{i} & =b_{10}+b_{11}x_{1i}+c_{1}\eta_{1i}+c_{2}\eta_{i2}\\
u_{i} & =b_{20}+b_{21}x_{2i}+b_{22}w_{i}+\eta_{i2},
\end{align*}
Here $x_{1i}$ and $x_{2i}$ are exogenous regressors and $(\eta_{i1},\eta_{i2})\distiid N[0,I_{2}]$.
The unknown parameters are $\{b_{1k}\}_{k=0}^{1}$, $\{b_{2k}\}_{k=0}^{2}$,
$c_{1}$, and $c_{2}$. Let $y_{i}\defeq I(u_{i}\ge0)$ be an indicator
for employment status. The econometrician observes the outcome $y_{i}$
but not the latent utility $u_{i}$. In addition, the econometrician
observes a person's wage $w_{i}$ if and only if he/she works (i.e.\ if
$y_{i}=1$).
\end{model}

\section{Generalized indirect inference\label{sec:gii}}

We propose to estimate the model in \secref{model} via a generalization
of indirect inference. First, in Section 3.1 we exposit the method
of indirect inference as originally formulated. In Section 3.2 we
explain the difficulty of applying the original approach to discrete
choice models. Then, Section 3.3 presents our generalized indirect
inference estimator that resolves this difficulty.

\subsection{Indirect inference\label{sub:ii}}

Indirect inference exploits the ease and speed with which one can
typically simulate data from even complex structural models. The basic
idea is to view both the observed data and the simulated data through
the ``lens'' of a descriptive statistical (or auxiliary) model characterized
by a set of $d_{\theta}$ auxiliary parameters $\t$. The $d_{\beta}\le d_{\theta}$
structural parameters $\b$ are then chosen so as to make the observed
data and the simulated data look similar when viewed through this
lens.

To formalize these ideas, assume the observed choices $\{y_{it}\}$,
$i=1,\ldots,n$, $t=1,\ldots,T$, are generated by the structural
discrete choice model described in \eqref{latentutil}, for a given
value $\b_{0}$ of the structural parameters. An auxiliary model can
be estimated using the observed data to obtain parameter estimates
$\hat{\t}_{n}$. Formally, $\hat{\t}_{n}$ solves: 
\begin{equation}
\hat{\t}_{n}\defeq\argmax_{\theta\in\Theta}\L_{n}(y,x;\t)=\argmax_{\theta\in\Theta}\frac{1}{n}\sum_{i=1}^{n}\like(y_{i},x_{i};\theta),\label{eq:thetahat}
\end{equation}
where $\L_{n}(y,x;\t)$ is the average log-likelihood function (or
more generally, some statistical criterion function) associated with
the auxiliary model, $y\defeq\{y_{it}\}$ is the set of observed choices,
and $x\defeq\{x_{it}\}$ is the set of observed exogenous variables.

Let $\eta^{m}\defeq\{\eta_{it}^{m}\}$ denote a set of simulated draws
for the values of the unobservable components of the model, with these
draws being independent across $m\in\{1,\ldots,M\}$. Then given $x$
and a parameter vector $\b$, the structural model can be used to
generate $M$ corresponding sets of simulated choices, $y^{m}(\beta)\defeq\{y_{it}^{m}(\b)\}$.
(Note that the same values of $x$ and $\{\eta^{m}\}$ are used for
all $\beta$.) Estimating the auxiliary model on the $m$th simulated
dataset thus yields 
\begin{equation}
\hat{\t}_{n}^{m}(\b)\defeq\argmax_{\theta\in\Theta}\L_{n}(y^{m}(\b),x;\t).\label{eq:thetatilde}
\end{equation}
Let $\abv{\t}_{n}(\b)\defeq\tfrac{1}{M}\sum_{m=1}^{M}\hat{\t}_{n}^{m}(\b)$
denote the average of these estimates. Under appropriate regularity
conditions, as the observed sample size $n$ grows large (holding
$M$ and $T$ fixed), $\abv{\t}_{n}(\b)$ converges uniformly in probability
to a non-stochastic function $\bind(\beta)$, which \citet{GMR93JAE}
term the \emph{binding function}.

Loosely speaking, indirect inference generates an estimate $\hat{\b}_{n}$
of the structural parameters by choosing $\b$ so as to make $\hat{\t}_{n}$
and $\abv{\t}_{n}(\b)$ as close as possible, with consistency following
from $\hat{\t}_{n}$ and $\abv{\t}_{n}(\b_{0})$ both converging to
the same pseudo-true value $\theta_{0}\defeq\bind(\beta_{0})$. To
implement the estimator we require a formal metric of the distance
between $\hat{\t}_{n}$ and $\abv{\t}_{n}(\b)$. There are three approaches
to choosing such a metric, analogous to the three classical approaches
to hypothesis testing: the Wald, likelihood ratio (LR), and Lagrange
multiplier (LM) approaches.\footnote{This nomenclature is due to Eric Renault. The Wald and LR approaches
were first proposed in \citet{Smith90Thesis,Smith93JAE} and later
extended by \citet{GMR93JAE}. The LM approach was first proposed
in \citet{GT96ET}.} 

The Wald approach to indirect inference chooses $\b$ to minimize
the weighted distance between $\abv{\theta}_{n}(\b)$ and $\hat{\t}_{n}$,
\[
Q_{n}^{\Wald}(\beta)\defeq\smlnorm{\abv{\theta}_{n}(\b)-\hat{\t}_{n}}_{W_{n}}^{2},
\]
where $\smlnorm x_{A}^{2}\defeq x^{\trans}Ax$, and $W_{n}$ is a
sequence of positive-definite weight matrices. 

The LR approach forms a metric implicitly by using the average log-likelihood
$\L_{n}(y,x;\theta)$ associated with the auxiliary model. In particular,
it seeks to minimize 
\[
Q_{n}^{\LR}(\beta)\defeq-\L_{n}(y,x;\abv{\theta}_{n}(\b))=-\frac{1}{n}\sum_{i=1}^{n}\like(y_{i},x_{i};\abv{\theta}_{n}(\beta))
\]

Finally, the LM approach does not work directly with the estimated
auxiliary parameters $\abv{\theta}_{n}(\b)$ but instead uses the
score vector associated with the auxiliary model.\footnote{When the LM approach is implemented using an auxiliary model that
is (nearly) correctly specified in the sense that it provides a (nearly)
correct statistical description of the observed data, \citet{GT96ET}
refer to this approach as efficient method of moments (EMM).} Given the estimated auxiliary model parameters $\hat{\t}$ from the
observed data, the score vector is evaluated using each of the $M$
simulated data sets. The LM estimator then minimizes a weighted norm
of the average score vector across these datasets, 
\[
Q_{n}^{\LM}(\beta)\defeq\norm{\frac{1}{M}\sum_{m=1}^{M}\dot{\L}_{n}(y^{m}(\b),x;\hat{\t}_{n})}_{V_{n}}^{2},
\]
where $\dot{\L}_{n}$ denotes the gradient of $\L_{n}$ with respect
to $\t$, and $V_{n}$ is a sequence of positive-definite weight matrices.

All three approaches yield consistent and asymptotically normal estimates
of $\b_{0}$, and are first-order asymptotically equivalent in the
exactly identified case in which $d_{\beta}=d_{\theta}$. In the over-identified
case, when the weight matrices $W_{n}$ and $V_{n}$ are chosen optimally
(in the sense of minimizing asymptotic variance) both the Wald and
LM estimators are more efficient than the LR estimator. However, if
the auxiliary model is correctly specified, all three estimators are
asymptotically equivalent not only to each other but also to maximum
likelihood (provided that $M$ is sufficiently large).

\subsection{Indirect inference for discrete choice models}

Step functions arise naturally when applying indirect inference to
discrete choice models because any simulated choice $y_{it}^{m}(\b)$
is a step function of $\b$ (holding fixed the set of random draws
$\{\eta_{it}^{m}\}$ used to generate simulated data from the structural
model). Consequently, the sample binding function $\abv{\t}_{n}(\b)$
is discontinuous in $\b$. Obviously, this discontinuity is inherited
by the criterion functions minimized by the II estimators in Section
3.1. 

Thus, given discrete outcomes, II cannot be implemented using gradient-based
optimization methods. One must instead rely on derivative-free methods
(such as the Nelder-Mead simplex method); random search algorithms
(such as simulated annealing); or abandon optimization altogether,
and instead implement a Laplace-type estimator, via Markov Chain Monte
Carlo (MCMC; see \citealp{CH03JoE}). But convergence of derivative-free
methods is often very slow; while MCMC, even when it converges, may
produce (in finite samples) an estimator substantially different from
the optimum of the statistical criterion to which it is applied (see
\citealp{KN12mimeo}). Thus, the non-smoothness of the criterion functions
that define II estimators render them very difficult to use in the
case of discrete data.

Despite the difficulties in applying II to discrete choice models,
the appeal of the II approach has led some authors to push ahead and
apply it nonetheless. Some notable papers that apply II by optimizing
non-smooth objective functions are \citet{MRV95JAE}, \citet{AL00RESt},
\citet{Nagypal07RES}, \citet{EHM15IER}, \citet{LZ2015AEJM} and
\citet{Skira2015IER}. Our work aims to make it much easier to apply
II in these and related contexts.

\subsection{A smoothed estimator (GII)\label{sub:proposal}}

Here we propose a generalization of indirect inference that is far
more practical in the context of discrete outcomes. The fundamental
idea is that the estimation procedures applied to the observed and
simulated data sets need not be identical, provided that they both
provide consistent estimates of the same binding function. (\citealp{GR03JASA},
use a similar insight to develop robust estimation procedures in the
context of indirect inference.) We exploit this idea to smooth the
function $\abv{\t}_{n}(\b)$, obviating the need to optimize a step
function when using indirect inference to estimate a discrete choice
model.

Let $u_{itj}^{m}(\b)$ denote the latent utility that individual $i$
attaches to alternative $j\in\{1,\ldots,$$J-1\}$ in period $t$
of the $m$th simulated data set, given structural parameters $\b$
(recall that the utility of the $J$th alternative is normalized to
$0$). Rather than use the simulated choice $y_{itj}^{m}(\b)$ when
computing $\abv{\t}_{n}(\b)$, we propose to replace it by the following
smooth function of the latent utilities,
\[
y_{itj}^{m}(\beta,\lambda)\defeq K_{\lambda}[u_{itj}^{m}(\beta)-u_{it1}^{m}(\beta)\sep\ldots\sep u_{itj}^{m}(\beta)-u_{it,J-1}^{m}(\beta)],
\]
where $K:\reals^{J-1}\setmap\reals$ is a smooth, mean-zero multivariate
cdf, and $K_{\lambda}(v)\defeq K(\lambda^{-1}v)$. As the smoothing
parameter $\l$ goes to $0$, the preceding converges to $y_{itj}^{m}(\beta,0)=y_{itj}^{m}(\beta)$.
Defining $\abv{\theta}_{n}(\beta,\lambda)\defeq\tfrac{1}{M}\sum_{m=1}^{M}\hat{\t}_{n}^{m}(\b,\lambda)$,
where
\begin{equation}
\hat{\t}_{n}^{m}(\b,\lambda)\defeq\argmax_{\theta\in\Theta}\L_{n}(y^{m}(\b,\lambda),x;\t),\label{eq:simlest}
\end{equation}
we may regard $\abv{\theta}_{n}(\beta,\lambda)$ as a smoothed estimate
of $\bind(\beta)$, for which it is consistent so long as $\l=\lambda_{n}\goesto0$
as $n\goesto\infty$. Accordingly, an indirect inference estimator
based on $\abv{\theta}_{n}(\beta,\lambda_{n})$, which we shall henceforth
term the \emph{generalized indirect inference} (GII) estimator, ought
to be consistent for $\beta_{0}$.

Each of the three approaches to indirect inference can be generalized
simply by replacing each simulated choice $y_{itj}^{m}(\b)$ with
its smoothed counterpart $y_{itj}^{m}(\beta,\lambda_{n})$. For the
Wald and LR estimators, this entails using the smoothed sample binding
function $\abv{\theta}_{n}(\beta,\lambda_{n})$ in place of the unsmoothed
estimate $\abv{\theta}_{n}(\b)$. (See \subref{bias} below for the
exact forms of the criterion functions.) The remainder of this paper
is devoted to studying the properties of the resulting estimators,
both analytically (\secref{asymptotics}) and through a series of
simulation exercises (\secref{monte}).

The GII approach was first suggested in an unpublished manuscript
by \citet{KS03mimeo}, but they did not derive the asymptotic properties
of the estimator. Despite this, GII has proven to be popular in practice,
and has already been applied in a number of papers, such as \citet{GG07NBER},
\citet{Cassidy12mimeo}, \citet{ASV13Ecta}, \citet{Morten13mimeo},
\citet{Yypma2013thesis}, \citet{LM14ERE} and \citet{LG15mimeo}.
Given the growing popularity of the method, a careful analysis of
its asymptotic properties is obviously needed.

\subsection{Related literature}

Our approach to smoothing in a discrete choice model bears a superficial
resemblance to that used by \citet{Hor92Ecta} to develop a smoothed
version of \citeauthor{Manski85JoE}'s \citeyearpar{Manski85JoE}
maximum score estimator for a binary response model. As here, the
smooth version of maximum score is constructed by replacing discontinuous
indicators with smooth cdfs in the sample criterion function. 

However, there is a fundamental difference in the statistical properties
of the minimization problems solved by Manski's estimator, and the
(unsmoothed) indirect inference estimator. Specifically, $n^{-1/2}$-consistent
estimators are available for the \emph{unsmoothed} problem considered
in this paper (see \thmref{limitdist} below, or \citealp{PakP89Ecta});
whereas, in the case of \citeauthor{Manski85JoE}'s \citeyearpar{Manski85JoE}
maximum score estimator, only $n^{-1/3}$-consistency is obtained
without smoothing (see \citealp{KP90AS}), and smoothing yields an
estimator with an improved rate of convergence.

A potentially more relevant analogue for the present paper is smoothed
quantile regression. This originates with \citeauthor{Hor98Ecta}'s
\citeyearpar{Hor98Ecta} work on the smoothed least absolute deviation
estimator, extended to more general quantile regression and quantile-IV
models by \citet{Wha06ET}, \citet{Ots08JoE} and \citet{KSun12mimeo}.
The latter papers do not smooth the criterion function, but rather
the estimating equations (approximate first-order conditions) that
equivalently define the estimator. These first-order conditions involve
indicator-type discontinuities like those in our problem, smoothed
in the same way. Insofar as the problem of solving the estimating
equations is analogous to the minimum-distance problem solved by the
II estimator, the effects of smoothing are similar: in each case smoothing
(if done appropriately) affects neither the rate of convergence nor
the limiting distribution of the estimator, relative to its unsmoothed
counterpart.

The motivation for smoothing in the quantile regression case involves
the potential for higher-order asymptotic improvements.\footnote{While potential computational benefits have been noted in passing,
we are not aware of any attempt to demonstrate these formally, in
the manner of Theorems~\ref{thm:rootdist}--\ref{thm:optim} below.} In contrast, in the present setting, which involves structural models
of possibly great complexity, the potential for higher-order improvements
is limited.\footnote{This is particularly evident when the auxiliary model consists of
a system of regression equations, as per \subref{auxiliarymodel}
below. For while smoothing does indeed reduce the variability of the
simulated (discrete) outcomes $y_{it}^{m}(\beta,\lambda)$, this may
\emph{increase} the variance with which some parameters of the auxiliary
model are estimated, if $y_{it}$ appears as a regressor in that model:
as will be the case for Models~\ref{mod:dynprobit} and \ref{mod:initialprob}
(see Sections~\ref{sub:dynprobit} and \ref{sub:initialprob} below).
(Note that any such increase, while certainly possible, is of only
second-order importance, and disappears as $\lambda_{n}\goesto0$.)} The key motivation for smoothing in our case is computational. 

Accordingly, much of this paper is devoted to a formal analysis of
the potential computational gains from smoothing. In particular, Sections~\ref{sub:performance}--\ref{sub:convergence}
are devoted to providing a theoretical foundation for our claim that
smoothing facilitates the convergence of standard derivative-based
optimization that are widely used to solve (smooth) optimization problems
in practice. 

For the class of models considered in this paper, two leading alternative
estimation methods that might be conisidered are simulated maximum
likelihood (SML) in conjunction with the Geweke, Hajivassiliou and
Keane (GHK) smooth probability simulator (see Section~4 in \citealp{KG01Hdbk}),
and the nonparametric simulated maximum likelihood (NPSML) estimator
(\citealp{DG84JRSSB,FS04ET,KS12JoE}). However, the GHK simulator
can only be computed in models possessing a special structure -- which
is true for Models~\ref{mod:serialprobit}, \ref{mod:dynprobit}
and \ref{mod:trichotomous} above, but \emph{not} for \modref{initialprob}
-- while in models that involve a mixture of discrete and continuous
outcomes, NPSML may require the calculation of rather high-dimensional
kernel density estimates in order to construct the likelihood, the
accuracy of which may require simulating the model a prohibitively
large number of times.

Finally, an alternative approach to smoothing the II estimator is
importance sampling, as in \citet{KS10IER} and \citet{ST13mimeo}.
The basic idea is to simulate data from the structural model only
once (at the initial estimate of $\b$). One holds these simulated
data fixed as one iterates. Given an updated estimate of $\b$, one
re-weights the original simulated data points, so those initial simulations
that are more (less) likely under the new $\b$ (than under the initial
$\b$) get more (less) weight in forming the updated objective function. 

In our view the GII and importance sampling approaches both have virtues.
The main limitation of the importance sampling approach is that in
many models the importance sample weights may themselves be computationally
difficult to construct. \citet{KS10IER}, when working with models
similar to those in Section 2, assume that all variables are measured
with error, which gives a structure that impies very simple weights.
In many contexts such a measurement error assumption may be perfectly
sensible. But the GII method can be applied directly to the models
of Section 2 without adding any auxillary assumptions (or parameters).

\section{Further refinements and the choice of auxiliary model\label{sec:refinements}}

\subsection{Smoothing in dynamic models\label{sub:dynamic}}

For models in which latent utilities depend on past choices (as distinct
from past \emph{utilities}, which are already smooth), such as Models~\ref{mod:dynprobit}
and \ref{mod:initialprob} above, the performance of GII may be improved
by making a further adjustment to the smoothing proposed in \subref{proposal}.
The nature of this adjustment is best illustrated in terms of the
example provided by \modref{dynprobit}. In this case, it is clear
that setting
\[
y_{it}^{m}(\beta,\lambda)\defeq K_{\lambda}[b_{1}x_{it}+b_{2}y_{i,t-1}^{m}(\beta)+\epsilon_{it}^{m}],
\]
where $y_{i,t-1}^{m}(\beta)$ denotes the \emph{unsmoothed} choice
made at date $t-1$, will yield unsatisfactory results, insofar as
the $y_{it}^{m}(\beta,\lambda)$ so constructed will remain discontinuous
in $\beta$. To some extent, this may be remedied by modifying the
preceding to
\begin{equation}
y_{it}^{m}(\beta,\lambda)\defeq K_{\lambda}[b_{1}x_{it}+b_{2}y_{i,t-1}^{m}(\beta,\lambda)+\epsilon_{it}^{m}],\label{eq:dynwrong}
\end{equation}
with $y_{i0}^{m}(\beta,\lambda)\defeq0$, as per the specification
of the model. However, while the $y_{it}^{m}(\beta,\lambda)$'s generated
through this recursion will indeed be smooth (i.e., twice continuously
differentiable), the nesting of successive approximations entailed
by \eqref{dynwrong} implies that for large $t$, the derivatives
of $y_{it}^{m}(\beta,\lambda)$ may be highly irregular unless a relatively
large value of $\lambda$ is employed.

This problem may be avoided by instead computing $y_{it}^{m}(\beta,\lambda)$
as follows. Defining $v_{itk}^{m}(\beta)\defeq b_{1}x_{it}+b_{2}\indic\{k=1\}+\epsilon_{it}^{m},$
we see that the \emph{unsmoothed} choices satisfy
\begin{align*}
y_{it}(\beta) & =\indic\{v_{it0}^{m}(\beta)\geq0\}\cdot[1-y_{i,t-1}(\beta)]+\indic\{v_{it1}^{m}(\beta)\geq0\}\cdot y_{i,t-1}(\beta),
\end{align*}
which suggests using the following recursion for the smoothed choices,
\begin{equation}
y_{it}^{m}(\beta,\lambda)\defeq K_{\lambda}[v_{it0}^{m}(\beta)]\cdot[1-y_{i,t-1}^{m}(\beta,\lambda)]+K_{\lambda}[v_{it1}^{m}(\beta)]\cdot y_{i,t-1}^{m}(\beta,\lambda),\label{eq:dynright}
\end{equation}
with $y_{i0}^{m}(\beta,\lambda)\defeq0$. This indeed yields a valid
approximation to $y_{it}(\beta)$, as $\lambda\goesto0$. The smoothed
choices computed using \eqref{dynright} involve no nested approximations,
but merely sums of products involving terms of the form $K_{\lambda}[v_{isk}^{m}(\beta)]$.
The derivatives of these are well-behaved with respect to $\lambda$,
even for large $t$, and are amenable to the theoretical analysis
of \secref{asymptotics}. 

Nonetheless, we find that even if smoothing is done by simply using
\eqref{dynwrong}, GII appears to work well in practice. This will
be shown in the simulation exercises reported in \secref{monte}.

\subsection{Bias reduction via jackknifing\label{sub:bias}}

As we noted in Section 3.3, GII inherits the consistency of the II
estimator, provided that $\lambda_{n}\goesto0$ as $n\goesto\infty$.
However, as smoothing necessarily imparts a bias to the sample binding
function $\abv{\theta}_{n}(\beta,\lambda_{n})$, and thence to the
GII estimator, we need $\lambda_{n}$ to shrink to zero at a sufficiently
fast rate if GII is to enjoy the same limiting distribution as the
unsmoothed estimator. On the other hand, if $\lambda_{n}\goesto0$
too rapidly, derivatives of the GII criterion function will become
highly irregular, impeding the ability of derivative-based optimization
routines to locate the minimum.

Except for certain special cases, the smoothing bias is of the order
$\smlnorm{\theta(\beta_{0},\lambda)-\theta(\beta_{0},0)}=O(\lambda)$
and no smaller. Thus, it is only dominated by the estimator variance
if $n^{1/2}\lambda_{n}=o_{p}(1)$. On the other hand, it follows from
\propref{sufficiency} below that $n^{1-1/p_{0}}\lambda_{n}^{2l-1}\goesto\infty$
is necessary to ensure that the $l$th order derivatives of the GII
criterion function converge, uniformly in probability, to their population
counterparts. Here $p_{0}\in(1,\infty]$ depends largely on the order
of moments possessed by the exogenous covariates $x$ (see \assref{lowlevel}
below). Thus, even in the most favorable case of $p_{0}=\infty$,
one can only ensure asymptotic negligibility of the bias (relative
to the variance) at the cost of preventing second derivatives of the
sample criterion function from converging to their population counterparts,
a convergence that is necessary to ensure the good performance of
at least some derivative-based optimization routines (see \subref{convergence}
below).

Fortunately, these difficulties can easily be overcome by applying
Richardson extrapolation -- commonly referred to as ``jackknifing''
in the statistics literature -- to the smoothed sample binding function.
Provided that the \emph{population} binding function is sufficiently
smooth, a Taylor series expansion gives $\bind_{l}(\beta,\lambda)=\bind_{l}(\beta,0)+\sum_{r=1}^{s}\alpha_{rl}(\beta)\lambda^{r}+o(\lambda^{s})$
as $\lambda\goesto0$, for $l\in\{1,\ldots,d_{\theta}\}$. Then, for
a fixed choice of $\delta\in(0,1)$, we have the first-order extrapolation,
\[
\bind_{l}^{1}(\beta,\lambda)\defeq\frac{\bind_{l}(\beta,\delta\lambda)-\delta\bind_{l}(\beta,\lambda)}{1-\delta}=\bind_{l}(\beta,0)+\delta\sum_{r=2}^{s}(\delta^{r-1}-1)\alpha_{rl}(\beta)\lambda^{r}+o(\lambda^{s}),
\]
for every $l\in\{1,\ldots,d_{\theta}\}$. By an iterative process,
for $k\leq s-1$ we can construct a $k$th order extrapolation of
the binding function, which satisfies
\begin{equation}
\bind^{k}(\beta,\lambda)\defeq\sum_{r=0}^{k}\gamma_{rk}\bind(\beta,\delta^{r}\lambda)=\bind(\beta,0)+O(\lambda^{k+1}),\label{eq:kextrap}
\end{equation}
where the weights $\{\gamma_{rk}\}_{r=0}^{k}$ (which can be negative)
satisfy $\sum_{r=0}^{k}\gamma_{rk}=1$, and may be calculated using
Algorithm~1.3.1 in \citet{Sidi03}. It is immediately apparent that
the $k$th order jackknifed sample binding function,
\begin{equation}
\abv{\theta}_{n}^{k}(\beta,\lambda_{n})\defeq\sum_{r=0}^{k}\gamma_{rk}\abv{\theta}_{n}(\beta,\delta^{r}\lambda_{n})\label{eq:jackknifed}
\end{equation}
will enjoy an asymptotic bias of order $O_{p}(\lambda_{n}^{k+1})$,
whence only $n^{1/2}\lambda_{n}^{k+1}=o_{p}(1)$ is necessary for
the bias to be asymptotically negligible.

In the case where $\hat{\theta}_{n}^{m}(\beta,\lambda)=g(T_{n}^{m}(\beta,\lambda))$,
for some differentiable transformation $g$ of a vector $T_{n}^{m}$
of sufficient statistics (as in \secref{asymptotics} below), jackknifing
could be applied directly to these statistics. Thus, if we were to
set $\hat{\theta}_{n}^{mk}(\beta,\lambda_{n})\defeq g(\sum_{r=0}^{k}\gamma_{rk}T_{n}^{m}(\beta,\lambda_{n})),$
then $\frac{1}{M}\sum_{m=1}^{M}\hat{\theta}_{n}^{mk}(\beta,\lambda_{n})$
would also have an asymptotic bias of order $O_{p}(\lambda_{n}^{k+1})$.
This approach may have computational advantages if the transformation
$g$ is relatively costly to compute (e.g.\ when it involves matrix
inversion). Note that since $T_{n}^{m}$ will generally involve averages
of nonlinear transformations of kernel smoothers, it will not generally
be possible to achieve the same bias reduction through the use of
higher-order kernels; whereas if only linear transformations were
involved, both jackknifing and higher-order kernels would yield identical
estimators (see, e.g.,\ \citealp{JF93JNP}).

Jackknifed GII estimators of order $k\in\naturals_{0}$ may now be
defined as the minimizers of:
\begin{align}
Q_{nk}^{e}(\beta,\lambda_{n}) & \defeq\begin{cases}
\smlnorm{\abv{\theta}_{n}^{k}(\b,\lambda_{n})-\hat{\t}_{n}}_{W_{n}}^{2} & \text{if }e=\Wald\\
-\L_{n}(y;x,\abv{\theta}_{n}^{k}(\b,\lambda_{n})) & \text{if }e=\LR\\
\smlnorm{\tfrac{1}{M}\sum_{m=1}^{M}\dot{\L}_{n}^{mk}(\b,\lambda_{n};\hat{\t}_{n})}_{V_{n}}^{2} & \text{if }e=\LM
\end{cases}\label{eq:criteria}
\end{align}
where $\dot{\L}_{n}^{mk}(\beta,\lambda;\hat{\theta}_{n})\defeq\sum_{r=0}^{k}\gamma_{rk}\dot{\L}_{n}(y^{m}(\beta,\lambda),x;\hat{\theta}_{n})$
denotes the jackknifed score function; the un-jackknifed estimators
may be recovered by taking $k=0$. Let $Q_{k}^{e}(\beta,\lambda)$
denote the large-sample limit of $Q_{nk}^{e}(\beta,\lambda)$; note
that $\beta\elmap Q_{k}^{e}(\beta,0)$ is smooth and does not depend
on $k$.

\subsection{Bias reduction via a Newton-Raphson step\label{sub:onestep}}

By allowing the number of simulations $M$ to increase with the sample
size, we can accelerate the rate at which $\abv{\theta}_{n}^{k}$
converges to the binding function. The convergence of the smoothed
derivatives of $\abv{\theta}_{n}^{k}$ should then follow under less
restrictive conditions on $\lambda_{n}$. That is, it may be possible
for the derivatives to converge, while still ensuring that the bias
is $o(n^{-1/2})$, even with $k=0$. Since the evaluation of $Q_{nk}$
is potentially costly when $M$ is very large, one possible approach
would be to minimize $Q_{nk}$ using a very small initial value of
$M$ (e.g.\ $M=1$). One could then increase $M$ to an appropriately
large value, and then compute a new estimate by taking at least one
Newton-Raphson step (applied to the new criterion).

A rigorous analysis of this estimator is beyond the scope of this
paper; we assume that $M$ is \emph{fixed} throughout \secref{asymptotics}.
Heuristically, since $\abv{\theta}_{n}^{k}$ is computed using $nM$
observations, it should be possible to show that if $M=M_{n}\goesto\infty$,
then the conditions specified in \propref{sufficiency} below would
remain the same, except with $nM_{n}$ replacing every appearance
of $n$ in \eqref{lambdacond}.

\subsection{Choosing an auxiliary model\label{sub:auxiliarymodel}}

Efficiency is a key consideration when choosing an auxiliary model.
As discussed in \subref{ii}, indirect inference (generalized or not)
has the same asymptotic efficiency as maximum likelihood when the
auxiliary model is correctly specified in the sense that it provides
a correct statistical description of the observed data (Gallant and
Tauchen, 1996). Thus, from the perspective of efficiency, it is important
to choose an auxiliary model (or a class of auxiliary models) that
is flexible enough to provide a good description of the data.

Another important consideration is computation time. For the Wald
and LR approaches to indirect inference, the auxiliary parameters
must be estimated repeatedly using different simulated data sets.
For this reason, it is critical to use an auxiliary model that can
be estimated quickly and efficiently. This consideration is less important
for the LM approach, as it does not work directly with the estimated
auxiliary parameters, but instead uses the first-order conditions
(the score vector) that defines these estimates.

To meet the twin criteria of statistical and computational efficiency,
in Section 6 we use linear probability models (or, more accurately,
sets of linear probability models) as the auxillary model. This class
of models is flexible in the sense that an individual's current choice
can be allowed to depend on polynomial functions of lagged choices
and of current and lagged exogenous variables. These models can also
be very quickly and easily estimated using ordinary least squares.
\secref{monte} describes in detail how we specify the linear probability
models for each of Models~\ref{mod:serialprobit}--\ref{mod:trichotomous}.
For Model 5, the Heckman selection model, the auxillary model would
be a set of OLS regressions with mixed discrete/continuous dependent
variables.

\section{Asymptotic and computational properties\label{sec:asymptotics}}

While GII could in principle be applied to any model of the form \eqref{latentutil}
-- and others besides -- in order to keep this paper to a manageable
length, the theoretical results of this section will require that
some further restrictions be placed on the structure of the model.
Nonetheless, these restrictions are sufficiently weak to be consistent
with each of Models~\ref{mod:serialprobit}--\ref{mod:selection}
from \secref{model}. We shall only provide results for the Wald and
LR estimators, when these are jackknifed as per \eqref{jackknifed}
above; but it would be possible to extend our arguments so as to cover
the LM estimator, and the alternative jackknifing procedure (in which
the statistics $T_{n}^{m}$ are jackknifed) outlined in \subref{bias}.

\subsection{A general framework\label{sub:general}}

Individual $i$ is described by vectors $x_{i}\in\reals^{d_{x}}$
and $\err_{i}\in\reals^{d_{\err}}$ of observable and unobservable
characteristics; $x_{i}$ includes \emph{all} the covariates appearing
in either the structural model or the auxiliary model (or both). $\err_{i}$
is a vector of independent variates that are also independent of $x_{i}$,
and normalized to have unit variance. Their marginal distributions
are fully specified by the model, allowing these to be simulated.
Collect $z_{i}\defeq(x_{i}^{\trans},\err_{i}^{\trans})^{\trans}\in\reals^{d_{z}}$,
and define the projections $[x(\cdot),\err(\cdot)]$ so that $(x_{i},\err_{i})=[x(z_{i}),\err(z_{i})]$.
Individual $i$ has a vector $y(z_{i};\beta,\lambda)\in\reals^{d_{y}}$
of smoothed outcomes, parametrized by $(\beta,\lambda)\in\Beta\times\Lambda$,
with $\lambda=0$ corresponding to true, unsmoothed outcomes under
$\beta$. At this level of abstraction, we need not make any notational
distinction between choices made by an individual at the same date
(over competing alternatives), vs.\ choices made at distinct dates;
we note simply that each corresponds to some element of $y(\cdot)$.
With this notation, the $m$th simulated choices may be written as
$y(z_{i}^{m};\beta,\lambda)$; since the same $x_{i}$'s are used
across all simulations, we have $x(z_{i}^{m})=x(z_{i}^{m^{\prime}})$
but $\err(z_{i}^{m})\neq\err(z_{i}^{m^{\prime}})$ for $m^{\prime}\neq m$.

In line with the discussion in \subref{auxiliarymodel}, we shall
assume that the auxiliary model takes the form of a system of seemingly
unrelated regressions (SUR; see e.g.\ Section~10.2 in \citealp{Greene08})
\begin{equation}
y_{r}(z_{i};\beta,\lambda)=\alpha_{xr}^{\trans}\Pi_{xr}x(z_{i})+\alpha_{yr}^{\trans}\Pi_{yr}y(z_{i};\beta,\lambda)+\xi_{ri},\label{eq:reggeneral}
\end{equation}
where $\xi_{i}\defeq(\xi_{1i},\ldots,\xi_{d_{y}i})^{\trans}\distiid N[0,\Sigma_{\xi}]$,
and $\Pi_{xr}$ and $\Pi_{yr}$ are selection matrices (i.e.\ matrices
that take at most one unit value along each row, and have zeros everywhere
else); let $\alpha_{r}\defeq(\alpha_{xr}^{\trans},\alpha_{yr}^{\trans})^{\trans}$.
Typically, $\Sigma_{\xi}$ will be assumed block diagonal: for example,
we may only allow those $\xi_{ri}$'s pertaining to alternatives
from the same period to be correlated. The auxiliary parameter vector
$\theta$ collects a subset (or possibly all) of the elements of $(\alpha_{1}^{\trans},\ldots,\alpha_{d_{y}}^{\trans})^{\trans}$
and the (unrestricted) elements of $\Sigma_{\xi}^{-1}$. (For the
calculations involving the score vector in \appref{sufficiency},
it shall be more convenient to treat the model as being parametrized
in terms of $\Sigma_{\xi}^{-1}$.)

Several estimators of $\theta$ are available, most notably OLS, feasible
GLS, and maximum likelihood, all of which agree only under certain
conditions.\footnote{In \secref{monte}, exact numerical agreement between these estimators
is ensured by requiring the auxiliary model equations referring to
alternatives from the same period to have the same set of regressors.} For concreteness, we shall assume that both the data-based and simulation-based
estimates of $\theta$ are produced by maximum likelihood. However,
the results of this paper could be easily extended to cover the case
where either (or both) of these estimates are computed using OLS or
feasible GLS. (In those cases, the auxiliary estimator can be still
be written as a function of a vector of sufficient statistics, a property
that greatly facilitates the proofs of our results.)

We shall also need to restrict the manner in which $y(\cdot)$ is
parametrized. To that end, we introduce the following collections
of linear indices\begin{subequations}\label{eq:vwidx}
\begin{alignat}{2}
\vidx_{r}(z;\beta) & \defeq z^{\trans}\Pi_{\vidx r}\gamma(\beta) & \qquad & r\in\{1,\ldots,d_{\vidx}\}\label{eq:vidx}\\
\widx_{r}(z;\beta) & \defeq z^{\trans}\Pi_{\widx r}\gamma(\beta) &  & r\in\{1,\ldots,d_{\widx}\},\label{eq:widx}
\end{alignat}
\end{subequations}where $\gamma:\Beta\setmap\Gamma$, and $\Pi_{l}^{\vidx}$
and $\Pi_{l}^{\widx}$ are selection matrices. We shall generally
suppress the $z$ argument from $\vidx$ and $\widx$, and other quantities
constructed from them, throughout the sequel. Our principal restriction
on $y(\cdot)$ is that it should be constructed from $(\vidx,\widx)$
as follows. Let $d_{c}\geq d_{\widx}$; for each $r\in\{1,\ldots,d_{c}\}$,
let $\set S_{r}\subseteq\{1,\ldots,d_{v}\}$ and define
\begin{equation}
\tilde{y}_{r}(\beta,\lambda)\defeq\widx_{r}(\beta)\cdot\prod_{s\in\set S_{r}}K_{\lambda}[\vidx_{s}(\beta)]\label{eq:ytilde}
\end{equation}
collecting these in the vector $\tilde{y}(\beta,\lambda)$; where
now $K:\reals\setmap[0,1]$ is a smooth \emph{univariate} cdf, and
$K_{\lambda}(v)\defeq K(\lambda^{-1}v)$.\footnote{Keane and Smith (2003) suggested using the multivariate logistic cdf,
$L(v)\defeq1/(1+\sum_{j=1}^{J-1}\e^{-v_{j}})$, and this is used in
the simulation exercises presented in \secref{monte}. But $L$ has
no particular advantages over other choices of $K$, and, for the
theoretical results work we shall in fact assume that the smoothing
is implemented using suitable products of univariate cdfs. This assumption
eases some of our arguments (but it is unlikely that it is necessary
for our results).} Note that $d_{c}\geq d_{\widx}$, and that we have defined
\begin{equation}
\widx_{r}(z;\beta)\defeq1\qquad r\in\{d_{\widx}+1,\ldots,d_{c}\}.\label{eq:widx2}
\end{equation}
Let $\err_{\widx}\defeq\Pi_{\err\widx}\err$ select the elements of
$\err$ upon which $\widx$ actually depends (as determined by the
$\Pi_{\widx r}$ matrices), and let $W_{r}\geq1$ denote an envelope
for $\widx_{r}$, in the sense that $\smlabs{\widx_{r}(z;\beta)}\leq W_{r}(z)$
for all $\beta\in\Beta$. Let $\varrho_{\min}(A)$ denote the smallest
eigenvalue of a symmetric matrix $A$. 

\medskip{}

Our results rely on the following low-level assumptions on the structural
model:

\setcounter{assumption}{11}
\begin{assumption}[low-level conditions]
\label{ass:lowlevel}~
\begin{enumerate}[{label=\textnormal{\smaller[0.76]{L\arabic*}}}]
\item  \label{enu:L:iid}$(y_{i},x_{i})$ is i.i.d.\ over $i$, and $\err_{i}^{m}=\err(z_{i}^{m})$
is independent of $x_{i}$ and i.i.d.\ over $i$ and $m$;
\item \label{enu:L:linear} $y(\beta,\lambda)=D\tilde{y}(\beta,\lambda)$
for some $D\in\reals^{d_{y}\times d_{c}}$, for $\tilde{y}$ as in
\eqref{ytilde};
\item \label{enu:L:gamma} $\gamma:\Beta\setmap\Gamma$ in \eqref{vwidx}
is twice continuously differentiable;
\item \label{enu:L:density}for each $k\in\{1,\ldots,d_{\err}\}$, $\var(\err_{ki})=1$,
and $\err_{ki}$ has a density $f_{k}$ with 
\[
\sup_{u\in\reals}(1+\smlabs u^{4})f_{k}(u)<\infty;
\]

\item \label{enu:L:marginals}there exists an $\epsilon>0$ such that, for
every for every $r\in\{1,\ldots,d_{\vidx}\}$ and $\beta\in\Beta$,
\[
\var(\vidx_{r}(z_{i};\beta)\mid\err_{\widx i},x_{i})\geq\epsilon;
\]

\item \label{enu:L:moment} there exists a $p_{0}\geq2$ such that for each
$r\in\{1,\ldots,d_{c}\}$, $\expect(W_{r}^{4}+\smlnorm{z_{i}}^{4})<\infty$,
$\expect\smlabs{W_{r}\smlnorm{z_{i}}^{3}}^{p_{0}}<\infty$ and $\expect\smlabs{W_{r}^{2}\smlnorm{z_{i}}^{2}}^{p_{0}}<\infty$;
\item \label{enu:L:nonsingular} $\inf_{(\beta,\lambda)\in\Beta\times\Lambda}\varrho_{\min}[\expect\abv y(z_{i};\beta,\lambda)\abv y(z_{i};\beta,\lambda)^{\trans}]>0$,
where $\abv y(\beta,\lambda)\defeq[y(\beta,\lambda)^{\trans},x^{\trans}]^{\trans}$;
and
\item \label{enu:L:auxiliary} the auxiliary model is a Gaussian SUR, as
in \eqref{reggeneral}.
\end{enumerate}
\end{assumption}
 
\begin{rem}
\label{rem:reparam} \eqref{vwidx} entails that the estimator criterion
function $Q_{n}$ depends on $\beta$ only through $\gamma(\beta)$,
i.e.\ $Q_{n}(\beta)=\tilde{Q}_{n}(\gamma(\beta))$ for some $\tilde{Q}_{n}$.
Since the derivatives of $\tilde{Q}_{n}$ with respect to $\gamma$
take a reasonably simple form, we shall establish the convergence
of $\partial_{\beta}^{l}Q_{n}$ to $\partial_{\beta}^{l}Q$, for $l\in\{1,2\}$,
by first proving the corresponding result for $\partial_{\gamma}^{l}\tilde{Q}_{n}$
and then applying the chain rule. Here, as elsewhere in the paper,
$\partial_{\beta}f$ denotes the gradient of $f:\Beta\setmap\reals^{d}$
(the transpose of the Jacobian), and $\partial_{\beta}^{2}f$ the
Hessian; see Section~6.3 of \citealp{MN07}, for a definition of
the latter when $k\geq2$.
\end{rem}
 
\begin{rem}
\label{rem:discrete} \assref{lowlevel} is least restrictive in models
with purely discrete outcomes, for which we may take $d_{\widx}=0$.
In particular, \enuref{L:moment} reduces to the requirement that
$\expect\smlnorm{z_{i}}^{3p_{0}}<\infty$.
\end{rem}
 
\begin{rem}
As the examples discussed in \subref{applexamples} illustrate, except
in the case where current (discrete) choices depend on past choices,
it is generally possible to take $D=I_{d_{y}}$ in \enuref{L:linear},
so that $y(\beta,\lambda)=\tilde{y}(\beta,\lambda)$.
\end{rem}

Consistent with the notation adopted in the previous sections of this
paper, let $\eta^{m}$ denote the $m$th set of simulated unobservables,
and $y^{m}(\beta,\lambda)$ the associated smoothed outcomes, for
$m\in\{1,\ldots,M\}$. We may set $\Lambda=[0,1]$ below without loss
of generality. Let $\filt$ denote a $\sigma$-field with respect
to which the observed data and simulated variables are measurable,
for all $n$, and recall the definition of $\like$ given in \eqref{thetahat}
above. We then have the following:

\setcounter{assumption}{17}
\begin{assumption}[regularity conditions]
\label{ass:regularity}~
\begin{enumerate}[{label=\textnormal{\smaller[0.76]{R\arabic*}}}]
\item \label{enu:R:correct} The structural model is correctly specified:
$y_{i}=y(z_{i}^{0};\beta_{0},0)$ for some $\beta_{0}\in\intr\Beta$;
\item \label{enu:R:scorevar} $\theta_{0}\defeq\theta(\beta_{0},0)\in\intr\Theta$;
\item \label{enu:R:uniqueness} the binding function $\bind(\beta,\lambda)$
is single-valued, and is $(k_{0}+1)$-times differentiable in $\beta$
for all $(\beta,\lambda)\in(\intr\Beta)\times\Lambda$;
\item \label{enu:R:injective} $\beta\elmap\bind(\beta,0)$ is injective;
\item \label{enu:R:smoothing} $\{\lambda_{n}\}$ is an $\filt$-measurable
sequence with $\lambda_{n}\inprob0$;
\item \label{enu:R:jackknifing} the order $k\in\{1,\ldots,k_{0}\}$ of
the jackknifing is chosen such that $n^{1/2}\lambda_{n}^{k+1}=o_{p}(1)$;
\item \label{enu:R:kernel}$K$ in \eqref{ytilde} is a twice continuously
differentiable cdf, for a distribution having integer moments of all
orders, and density $\dot{K}$ symmetric about the origin; and
\item \label{enu:R:weight} $W_{n}\inprob W$, for some positive definite
$W$.
\end{enumerate}
\end{assumption}
 
\begin{rem}
\enuref{R:injective} formalizes the requirement that the auxiliary
model be ``sufficiently rich'' to identify the parameters of the
structural model; $d_{\theta}\geq d_{\beta}$ evidently is necessary
for \enuref{R:injective} to be satisfied.
\end{rem}
 
\begin{rem}
\label{rem:uniformity}\enuref{R:smoothing} permits the bandwidth
to be sample-dependent, as distinct from assuming it to be a ``given''
deterministic sequence. This means our results hold \emph{uniformly}
in smoothing parameter sequences satisfying certain growth rate conditions:
see \remref{derivunif} below for details. \enuref{R:jackknifing}
ensures that, in conjunction with the choice of $\lambda_{n}$, the
jackknifing is such as to ensure that the bias introduced by the smoothing
is asymptotically negligible. \enuref{R:kernel} will be satisfied
for many standard choices of $K$, such as the Gaussian cdf, and many
smooth, compactly supported kernels.
\end{rem}

Assumptions~\ref{ass:lowlevel} and \ref{ass:regularity} are sufficient
for all of our main results. But to allow these to be stated at a
higher level of generality -- and thus permitting their application
to a broader class of structural and auxiliary models than are consistent
with \assref{lowlevel} -- we shall find it useful to phrase our results
as holding under \assref{regularity} and the following high-level
conditions. To state these, define $\Like_{n}(\theta)\defeq\Like_{n}(y,x;\theta)$,
$\Like(\theta)\defeq\expect\Like_{n}(\theta)$ and $\like_{i}^{m}(\beta,\lambda;\theta)\defeq\like(y_{i}^{m}(\beta,\lambda),x_{i};\theta)$.
$\dot{\like}_{i}^{m}$ and $\ddot{\L}_{n}$ respectively denote the
gradient of $\like_{i}^{m}$ and the Hessian of $\Like_{n}$ with
respect to $\theta$, while for a metric space $(Q,d)$, $\ell^{\infty}(Q)$
denotes the space of bounded and measurable real-valued functions
on $Q$.

\setcounter{assumption}{7}
\begin{assumption}[high-level conditions]
\label{ass:highlevel}~
\begin{enumerate}[{label=\textnormal{\smaller[0.76]{H\arabic*}}}]
\item \label{enu:H:twicediff} $\Like_{n}$ is twice continuously differentiable
on $\intr\Theta$;
\item \label{enu:H:unifcmpct} for $l\in\{0,1,2\}$, $\partial_{\theta}^{l}\Like_{n}(\theta)\inprob\partial_{\theta}^{l}\Like(\theta)$,
and 
\[
\frac{1}{n}\sum_{i=1}^{n}\dot{\like}_{i}^{m_{1}}(\beta_{1},\lambda_{1};\theta_{1})\dot{\like}_{i}^{m_{2}}(\beta_{2},\lambda_{2};\theta_{2})^{\trans}\inprob\expect\dot{\like}_{i}^{m_{1}}(\beta_{1},\lambda_{1};\theta_{1})\dot{\like}_{i}^{m_{2}}(\beta_{2},\lambda_{2};\theta_{2})^{\trans}
\]
uniformly on $\Beta\times\Lambda$ and compact subsets of $\intr\Theta$,
for every $m_{1},m_{2}\in\{0,1,\ldots,M\}$;
\item \label{enu:H:stocheq} $\psi^{m}$ is a mean-zero, continuous Gaussian
process on $\Beta\times\Lambda$ such that
\[
\psi_{n}^{m}(\beta,\lambda)\defeq n^{1/2}[\hat{\theta}_{n}^{m}(\beta,\lambda)-\theta(\beta,\lambda)]\wkc\psi^{m}(\beta,\lambda)
\]
in $\ell^{\infty}(\Beta\times\Lambda)$, jointly in $m\in\{0,1,\ldots,M\}$;
\item \label{enu:H:limitdist} for any (possibly) random sequence $\beta_{n}=\beta_{0}+o_{p}(1)$
and $\lambda_{n}$ as in \enuref{R:smoothing}, 
\begin{equation}
\psi_{n}^{m}(\beta_{n},\lambda_{n})=-H^{-1}\frac{1}{n^{1/2}}\sum_{i=1}^{n}\dot{\like}_{i}^{m}(\beta_{0},0;\theta_{0})+o_{p}(1)\eqdef-H^{-1}\phi_{n}^{m}+o_{p}(1)\wkc-H^{-1}\phi^{m},\label{eq:psitophi}
\end{equation}
jointly in $m\in\{0,1,\ldots,M\}$,\footnote{The first equality in \eqref{psitophi} is only relevant for $m\geq1$.}
where $H\defeq\expect\ddot{\L}_{n}(\theta)=\ddot{\L}(\theta)$ and
$\{\phi^{m}\}_{m=0}^{M}$ is jointly Gaussian with 
\begin{align}
\varmat & \defeq\expect\phi^{m_{1}}\phi^{m_{1}\trans}=\expect\phi_{n}^{m_{1}}\phi_{n}^{m_{1}\trans} & \covmat & \defeq\expect\phi^{m_{1}}\phi^{m_{2}\trans}=\expect\phi_{n}^{m_{1}}\phi_{n}^{m_{2}\trans}\label{eq:scorecovar}
\end{align}
 for every $m_{1},m_{2}\in\{0,1,\ldots,M\}$; and
\item \label{enu:H:deriv}$\{\lambda_{n}\}$ is such that for some $l\in\{0,1,2\}$,
\[
\sup_{\beta\in\Beta}\smlnorm{\partial_{\beta}^{l}\hat{\theta}_{n}^{m}(\beta,\lambda_{n})-\partial_{\beta}^{l}\theta(\beta,0)}=o_{p}(1).
\]

\end{enumerate}
\end{assumption}

The sufficiency of our low-level conditions for the preceding may
be stated formally follows.
\begin{prop}
\label{prop:sufficiency} Suppose Assumptions~\ref{ass:lowlevel}
and \ref{ass:regularity} hold. Then \assref{highlevel} holds with
$l=0$ in \enuref{H:deriv}. Further, if $\lambda_{n}>0$ for all
$n$, with 
\begin{equation}
n^{1-1/p_{0}}\lambda_{n}^{2l^{\prime}-1}/\log(\lambda_{n}^{-1}\pmax n)\inprob\infty\label{eq:lambdacond}
\end{equation}
for some $l^{\prime}\in\{1,2\}$, then \enuref{H:deriv} holds with
$l=l^{\prime}$.
\end{prop}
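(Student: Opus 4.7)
The plan is to exploit the Gaussian SUR/MLE structure granted by \enuref{L:auxiliary} in order to write $\hat{\theta}_n^m(\beta,\lambda) = g(T_n^m(\beta,\lambda))$, where $T_n^m(\beta,\lambda) \defeq \tfrac{1}{n}\sum_{i=1}^n t(z_i^m, x_i; \beta,\lambda)$ is a vector of sample cross-moments of $x_i$ and $y(z_i^m;\beta,\lambda)$, and $g$ is the explicit smooth map (matrix inverses, ordinary-least-squares formulas, and residual moment matrices) that delivers the MLE. Correspondingly, $\theta(\beta,\lambda) = g(\tau(\beta,\lambda))$ with $\tau(\beta,\lambda) \defeq \expect T_n^m(\beta,\lambda)$, and \enuref{L:nonsingular} keeps both arguments in a region where $g$ is infinitely differentiable. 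Having made this reduction, every conclusion in \assref{highlevel} translates into a statement about the sample cross-moments $T_n^m$, together with the elementary structural fact $y = D\tilde y$ of \enuref{L:linear} and the chain rule in \remref{reparam}.

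Verifying \enuref{H:twicediff} and \enuref{H:unifcmpct} is then essentially mechanical: the Gaussian log-likelihood is analytic in $\theta$ on the interior of $\Theta$, and $\Like_n$, $\dot{\Like}_n$, $\ddot{\Like}_n$ and the score cross-products of \enuref{H:unifcmpct} are polynomials in $\theta$ whose coefficients are sample averages of products of $y(z_i^m;\beta,\lambda)$ and $x_i$. A standard Glivenko--Cantelli/bracketing argument applied to these averages --- using the envelope moments of \enuref{L:moment}, the smoothness of $\gamma$ in \enuref{L:gamma}, and the boundedness of $K_\lambda$ --- delivers uniform convergence over $\Beta\times\Lambda$ and over compacta in $\intr\Theta$. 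For \enuref{H:deriv} with $l=0$, the uniform convergence $T_n^m(\cdot,\lambda_n) \to \tau(\cdot,0)$ combines the ULLN above with the continuity of $\tau(\beta,\lambda)$ in $\lambda$ at $\lambda=0$: the latter follows from $K_\lambda[\vidx_s(\beta)] \goesto \indic\{\vidx_s(\beta)\geq0\}$ and dominated convergence, where domination is secured by \enuref{L:density}--\enuref{L:marginals} (bounded conditional density of $\vidx_s$ given $(\err_\widx,x)$) and the envelopes from \enuref{L:moment}. Weak convergence \enuref{H:stocheq} follows from a functional CLT for $n^{1/2}(T_n^m-\tau)$ in $\ell^\infty(\Beta\times\Lambda)$; the relevant class of integrands is Donsker, since it consists of products of $W_r$-envelope-integrable functions of $z$ indexed by the smooth map $\gamma(\beta)$ and by the smooth cdfs $K_\lambda$, which have finite uniform bracketing entropy under \enuref{L:gamma}, \enuref{R:kernel} and \enuref{L:moment}. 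The delta method through the smooth $g$ then yields the limit process $\psi^m$. The linearization \enuref{H:limitdist} is the usual MLE Taylor expansion at $(\beta_n,\lambda_n) \to (\beta_0,0)$ combined with the already-established uniform consistency of $\ddot{\Like}_n(\theta)$; the bias $\theta(\beta_0,\lambda_n) - \theta_0 = O(\lambda_n)$ from \enuref{R:uniqueness} is absorbed into the $o_p(1)$ term under \enuref{R:smoothing}.

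The real obstacle is \enuref{H:deriv} for $l\in\{1,2\}$. By the chain rule, $\partial_\beta^l \hat{\theta}_n^m(\beta,\lambda)$ reduces to a smooth function of $T_n^m$ and of its $l$th $\beta$-derivatives, so uniform convergence ultimately hinges on a ULLN for sample means of $\partial_\beta^l t(z_i^m,x_i;\beta,\lambda)$. By \enuref{L:linear} and \eqref{ytilde}, these derivatives contain (by Leibniz) terms of the schematic form
\[
\lambda^{-l}\,K^{(l)}\!\bigl(\lambda^{-1}\vidx_s(\beta)\bigr)\cdot W_r(z)\cdot \prod_{j=1}^l \partial_\beta \vidx_s(\beta),
\]
whose second moment is of order $\lambda^{-(2l-1)}$: one factor of $\lambda^{-1}$ is saved by the change of variable $u=\vidx_s/\lambda$ together with the bounded conditional density guaranteed by \enuref{L:marginals} and \enuref{L:density}, while \enuref{R:kernel} controls the resulting $K^{(l)}$ integrals. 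A Bernstein- or truncation-based maximal inequality on a bracketing cover of the indexing class $\{(\beta,\lambda)\}$ --- truncating the envelopes $W_r\lVert z\rVert^j$ at level $n^{1/p_0}$ and invoking the moment bounds of \enuref{L:moment} --- converts this variance rate into the sufficiency of $n^{1-1/p_0}\lambda_n^{2l-1}/\log(\lambda_n^{-1}\pmax n) \goesto \infty$ for the uniform law. This is precisely \eqref{lambdacond}, and it is exactly the step where the exponent $2l^\prime-1$ and the moment index $p_0$ in the statement of the proposition are generated. The detailed bookkeeping of this ULLN is the content of the argument deferred to Appendix~\ref{app:ullnproof}; all other parts of the proof are then routine assemblies of the pieces above.
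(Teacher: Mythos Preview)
Your proposal is correct and follows essentially the same route as the paper: reduce to sufficient statistics $T_n^m$ via the explicit SUR/MLE map, deduce \enuref{H:twicediff}--\enuref{H:limitdist} from a functional CLT for $T_n^m$ plus the delta method and a standard score expansion, and obtain \enuref{H:deriv} for $l\in\{1,2\}$ from a uniform-in-bandwidth LLN whose variance input is the $O(\lambda)$ bound on $\expect[K^{(l)}(\lambda^{-1}\vidx_s)^2\mid\filtg_{\widx,x}]$, combined with truncation of the envelopes at $n^{1/p_0}$ and a Bernstein-type maximal inequality. The only cosmetic difference is that the paper packages the entropy control via Euclidean classes (Nolan--Pollard polynomial $L_1$ covering numbers) rather than bracketing, but the mechanism and the emergence of the rate $n^{1-1/p_0}\lambda_n^{2l-1}/\log(\lambda_n^{-1}\pmax n)$ are identical.
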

 
\begin{rem}
\label{rem:deriv}It is evident from \eqref{lambdacond} that -- as
noted in \subref{bias} above -- the convergence of the higher-order
derivatives of the sample binding function requires more stringent
conditions on the smoothing sequence $\{\lambda_{n}\}$. We shall
be accordingly careful, in stating our results below, to identify
the weakest form of \enuref{H:deriv} (and correspondingly, of \eqref{lambdacond})
that is required for each of these.
\end{rem}
 
\begin{rem}
\label{rem:derivunif}Let $\{\blw{\lambda}_{n}\}$ and $\{\abv{\lambda}_{n}\}$
be deterministic sequences satisfying \eqref{lambdacond} and $\abv{\lambda}_{n}=o(1)$
respectively, and set $\Lambda_{n}\defeq[\blw{\lambda}_{n},\abv{\lambda}_{n}]$.
Then, as indicated in \remref{uniformity} above, $\filt$-measurability
of $\{\lambda_{n}\}$ entails that the convergence in \enuref{H:deriv}
holds uniformly over $\lambda\in\Lambda_{n}$, in the sense that
\[
\sup_{(\beta,\lambda)\in\Beta\times\Lambda_{n}}\smlnorm{\partial_{\beta}^{l}\hat{\theta}_{n}^{m}(\beta,\lambda)-\partial_{\beta}^{l}\theta(\beta,\lambda)}=o_{p}(1).
\]
A similar interpretation applies to Theorems~\ref{thm:rootdist}--\ref{thm:optim}
below.
\end{rem}

Proposition~5.1 is proved in \appref{sufficiency}.

\subsection{Application to examples\label{sub:applexamples}}

We may verify that each of the models from \secref{model} satisfy
\enuref{L:linear}--\enuref{L:moment}. In all cases, $x_{i}$ collects
all the (unique) elements of $\{x_{it}\}_{t=1}^{T}$, together with
any additional exogenous covariates used to estimate the auxiliary
model; while $\eta_{i}$ collects the elements of $\{\eta_{it}\}_{t=1}^{T}$.
Note that for the discrete choice Models~\ref{mod:serialprobit}--\ref{mod:trichotomous},
since the $\eta_{i}$ are Gaussian \enuref{L:moment} will be satisfied
if $\expect\smlnorm{x_{i}}^{3p_{0}}<\infty$. \enuref{L:nonsingular}
is a standard non-degeneracy condition.

\setcounter{model}{0}
\begin{model}
$u_{it}=bx_{it}+\sum_{s=1}^{t}r^{t-s}\eta_{is}$ by backward substitution.
So we set $(d_{\vidx},d_{\widx})=(T,0)$, with
\[
\vidx_{t}(z_{i};\beta)=x_{t}(z_{i})b(\beta)+\sum_{s=1}^{t}\eta_{s}(z_{i})d_{ts}(\beta),
\]
where $\beta=(b,r)$, $b(\beta)=b$ and $d_{ts}(\beta)=r^{t-s}$;
while $x_{t}(z_{i})$ and $\eta_{s}(z_{i})$ select the appropriate
elements of $z_{i}$, which collects $\{x_{it}\}$, $\{\eta_{it}\}$,
and any other exogenous covariates used in the auxiliary model. Thus
\enuref{L:linear} and \enuref{L:gamma} hold (formally, take $\gamma(\beta)=(b(\beta),\{d_{ts}(\beta)\})$).
\enuref{L:marginals} follows from the $\eta_{t}(z_{i})$'s being
standard Gaussian.
\end{model}
 
\begin{model}
As per the discussion in \subref{dynamic}, and \eqref{dynright}
in particular, we define
\[
\vidx_{tk}(z_{i};\beta)\defeq x_{t}(z_{i})b_{1}(\beta)+b_{2}(\beta)\indic\{k=1\}+\sum_{s=1}^{t}\eta_{s}(z_{i})d_{ts}(\beta)
\]
where the right-hand side quantities are defined by analogy with the
preceding example. Setting
\begin{equation}
y_{t}(\beta,\lambda)\defeq K_{\lambda}[\vidx_{t0}(\beta)]\cdot[1-y_{t-1}(\beta,\lambda)]+K_{\lambda}[\vidx_{t1}(\beta)]\cdot y_{t-1}(\beta,\lambda)\label{eq:recurse}
\end{equation}
with $y_{0}(\beta,\lambda)\defeq0$ thus yields smoothed choices having
the form required by \enuref{L:linear} and \enuref{L:gamma}, as
may be easily verified by backwards substitution. \enuref{L:marginals}
again follows from Gaussianity of $\eta_{t}(z_{i})$.
\end{model}

An identical recursion to \eqref{recurse} also works for \modref{initialprob}.
\modref{trichotomous} may be handled in a similar way to \modref{serialprobit},
but it is in certain respects simpler, because the errors are not
serially dependent. Finally, it remains to consider:

\setcounter{model}{4}
\begin{model}
From the preceding examples, it is clear that $\widx(z_{i};\beta)=w_{i}$
and $\vidx(z_{i};\beta)=u_{i}$ can be written in the linear index
form \eqref{vwidx}. The observable outcomes are the individual's
decision to work, and also his wage if he decides to work. These may
be smoothly approximated by: 
\begin{align*}
y_{1}(\beta,\lambda) & \defeq K_{\lambda}[\vidx(\beta)] & y_{2}(\beta,\lambda) & \defeq\widx(\beta)\cdot K_{\lambda}[\vidx(\beta)].
\end{align*}
respectively. Thus \enuref{L:linear}--\enuref{L:marginals} hold
just as in the other models. \enuref{L:moment} holds, in this case,
if $\expect\smlnorm{z_{i}}^{4p_{0}}<\infty$.
\end{model}

\subsection{Limiting distributions of GII estimators\label{sub:giidist}}

We now present our asymptotic results. Note that Assumptions~\ref{ass:regularity}
and \ref{ass:highlevel} are maintained throughout the following (even
if not explicitly referenced), though in accordance with \remref{deriv}
above, we shall always explicitly state the order of $l$ in \enuref{H:deriv}
that is required for each of our theorems.

Our first result concerns the limiting distributions of the minimizers
of the Wald and LR criterion functions, as displayed in \eqref{criteria}
above. For $e\in\{\Wald,\LR\}$, let $\hat{\beta}_{nk}^{e}$ be a
near-minimizer of $Q_{nk}^{e}$, in the sense that
\begin{equation}
Q_{nk}^{e}(\hat{\beta}_{nk}^{e},\lambda_{n})\leq\inf_{\beta\in\Beta}Q_{nk}^{e}(\beta,\lambda_{n})+o_{p}(n^{-1}).\label{eq:nearmin}
\end{equation}
The limiting variance of both estimators will have the familiar sandwich
form. To allow the next result to be stated succinctly, define
\begin{equation}
\Omega(U,V)\defeq(G^{\trans}UG)^{-1}G^{\trans}UH^{-1}VH^{-1}UG(G^{\trans}UG)^{-1}\label{eq:Omega}
\end{equation}
where $G\defeq[\partial_{\beta}\theta(\beta_{0},0)]^{\trans}$ denotes
the Jacobian of the binding function at $(\beta_{0},0)$, $H=\expect\ddot{\L}_{n}(\theta)$,
and $U$ and $V$ are symmetric matrices.
\begin{thm}[limiting distributions]
\label{thm:limitdist} Suppose \enuref{H:deriv} holds with $l=0$.
Then
\[
n^{1/2}(\hat{\beta}_{nk}^{e}-\beta_{0})\wkc N[0,\Omega(U_{e},V_{e})],
\]
where 
\begin{align}
U_{e} & \defeq\begin{cases}
W & \text{if }e=\Wald\\
H & \text{if }e=\LR
\end{cases} & V_{e} & \defeq\left(1+\frac{1}{M}\right)(\varmat-\covmat)\label{eq:UVe}
\end{align}
\end{thm}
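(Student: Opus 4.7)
The plan is to obtain both limit distributions from a single local expansion of the smoothed binding function, then apply an argmin continuous mapping theorem to each criterion in turn. First I would argue consistency: \enuref{H:stocheq} gives uniform weak convergence of $\psi_{n}^{m}$ in $\ell^{\infty}(\Beta\times\Lambda)$, which combined with the jackknife bias being $O_{p}(\lambda_{n}^{k+1})=o_{p}(1)$ (\enuref{R:jackknifing}) yields $\sup_{\beta\in\Beta}\smlnorm{\abv{\theta}_{n}^{k}(\beta,\lambda_{n})-\theta(\beta,0)}=o_{p}(1)$. Together with injectivity (\enuref{R:injective}) and smoothness (\enuref{R:uniqueness}), this forces $\hat{\beta}_{nk}^{e}\inprob\beta_{0}$ by a standard identification argument; a preliminary rate argument (exploiting the non-degenerate quadratic behavior of the population criterion at $\beta_{0}$, which in turn requires $G$ to have full column rank) then delivers $\hat{\beta}_{nk}^{e}-\beta_{0}=O_{p}(n^{-1/2})$.

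The core step is the asymptotic linearization: uniformly for $\beta_{n}=\beta_{0}+O_{p}(n^{-1/2})$,
\[
n^{1/2}[\abv{\theta}_{n}^{k}(\beta_{n},\lambda_{n})-\hat{\theta}_{n}]=G^{\trans}n^{1/2}(\beta_{n}-\beta_{0})+Z_{n}+o_{p}(1),\qquad Z_{n}\defeq-H^{-1}\Bigl(\frac{1}{M}\sum_{m=1}^{M}\phi_{n}^{m}-\phi_{n}^{0}\Bigr).
\]
I decompose the left-hand side via $\theta(\beta_{n},0)$ and $\theta_{0}$: the deterministic middle piece yields $G^{\trans}n^{1/2}(\beta_{n}-\beta_{0})+o_{p}(1)$ by \enuref{R:uniqueness}, and the third piece contributes $H^{-1}\phi_{n}^{0}+o_{p}(1)$ by \enuref{H:limitdist} with $m=0$. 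For the first piece, unwinding the jackknife gives $\frac{1}{M}\sum_{m,r}\gamma_{rk}\psi_{n}^{m}(\beta_{n},\delta^{r}\lambda_{n})$ plus a deterministic bias of order $O(\lambda_{n}^{k+1})=o(n^{-1/2})$ by the construction of $\{\gamma_{rk}\}$ and \enuref{R:jackknifing}. The functional weak convergence in \enuref{H:stocheq} supplies the stochastic equicontinuity needed to replace each $\psi_{n}^{m}(\beta_{n},\delta^{r}\lambda_{n})$ by $\psi_{n}^{m}(\beta_{0},\lambda_{n})+o_{p}(1)$, which \enuref{H:limitdist} then converts to $-H^{-1}\phi_{n}^{m}+o_{p}(1)$; since $\sum_{r}\gamma_{rk}=1$, the inner sum collapses to $-H^{-1}\abv{\phi}_{n}$, where $\abv{\phi}_{n}\defeq\frac{1}{M}\sum_{m=1}^{M}\phi_{n}^{m}$.

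Plugging this into the Wald criterion gives $nQ_{nk}^{\Wald}(\beta_{0}+n^{-1/2}t,\lambda_{n})=\smlnorm{Z_{n}+G^{\trans}t}_{W_{n}}^{2}+o_{p}(1)$ uniformly on compact $t$-sets. Because $\phi_{n}^{0}$ and each $\phi_{n}^{m}$ ($m\geq1$) share the covariates $x_{i}$ but use independent unobservables, the structure \eqref{scorecovar} gives $\var(\phi_{n}^{0}-\abv{\phi}_{n})\goesto(1+\tfrac{1}{M})(\varmat-\covmat)=V_{\Wald}$, so that $Z_{n}\wkc Z\sim N[0,H^{-1}V_{\Wald}H^{-1}]$. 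The argmin continuous mapping theorem (with the $o_{p}(n^{-1})$ slack in \eqref{nearmin} absorbed, using the rate from the first paragraph to localize onto compact $t$-sets) delivers $n^{1/2}(\hat{\beta}_{nk}^{\Wald}-\beta_{0})\wkc-(G^{\trans}WG)^{-1}G^{\trans}WZ$, whose variance is $\Omega(W,V_{\Wald})$. For the LR case, Taylor-expanding $\L_{n}$ about $\hat{\theta}_{n}$ using $\dot{\L}_{n}(\hat{\theta}_{n})=0$ and $\ddot{\L}_{n}(\tilde{\theta}_{n})\inprob H$ (via \enuref{H:twicediff}--\enuref{H:unifcmpct}) reduces the problem to minimizing $\smlnorm{Z_{n}+G^{\trans}t}_{-H}^{2}$ plus terms not depending on $t$; the argmin is $-(G^{\trans}HG)^{-1}G^{\trans}HZ$, and the identity $\Omega(-H,V)=\Omega(H,V)$ (the four sign factors in the definition of $\Omega$ cancel pairwise) delivers the sandwich $\Omega(H,V_{\LR})$.

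The principal obstacle is the uniformity underpinning the central expansion. Because only \enuref{H:deriv} with $l=0$ is assumed, I cannot differentiate $\abv{\theta}_{n}^{k}(\cdot,\lambda_{n})$ in $\beta$ and must control its local behavior entirely through the functional weak convergence in \enuref{H:stocheq}. The delicate point is verifying that the stochastic equicontinuity of $\{\psi_{n}^{m}\}$ is strong enough to absorb the simultaneous $O_{p}(n^{-1/2})$ perturbation in $\beta_{n}$ and the $\lambda_{n}\inprob0$ smoothing, over the finitely many scales $\delta^{r}\lambda_{n}$ at which the jackknife evaluates $\psi_{n}^{m}$, and that this meshes cleanly with the bias cancellation provided by the jackknife weights to leave no residual dependence on $\lambda_{n}$ in the limit.
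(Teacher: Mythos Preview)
Your proposal is correct and follows essentially the same route as the paper: uniform convergence of the (jackknifed) sample binding function gives consistency; stochastic equicontinuity from \enuref{H:stocheq} together with the jackknife bias bound from \enuref{R:jackknifing} yields the local linear expansion; and an argmin-type argument then delivers the normal limit with the stated sandwich variance, the covariance calculation for $Z_{n}$ being exactly yours.

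Two minor tactical differences are worth noting. First, for the LR criterion the paper expands $\L_{n}$ about $\abv{\theta}_{n}^{k}(\beta_{0},\lambda_{n})$ rather than about $\hat{\theta}_{n}$; both choices work, and your route via $\dot{\L}_{n}(\hat{\theta}_{n})=0$ is arguably more direct. Second, rather than establishing a preliminary $n^{-1/2}$ rate and then localizing to compact $t$-sets, the paper obtains a quadratic expansion valid for any $\delta_{n}=o_{p}(n^{1/2})$ with remainder $o_{p}(1+\smlnorm{\delta_{n}}+\smlnorm{\delta_{n}}^{2})$ and invokes Theorem~3.2.16 of \citet{VVW96}, which extracts both the rate and the limiting distribution in one step; this avoids having to argue the rate separately. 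Finally, note a notational slip: with the paper's convention $G\defeq[\partial_{\beta}\theta(\beta_{0},0)]^{\trans}\in\reals^{d_{\theta}\times d_{\beta}}$, your displayed linearization should read $G\,n^{1/2}(\beta_{n}-\beta_{0})$ rather than $G^{\trans}n^{1/2}(\beta_{n}-\beta_{0})$.
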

\begin{rem}
In view of \propref{sufficiency} and the remark that follows it,
\thmref{limitdist} does \emph{not} restrict the rate at which $\lambda_{n}\inprob0$
from \emph{below}; indeed, it continues to hold even if $\lambda_{n}=0$
for all $n$, in which case the estimation problem is closely related
to that considered by \citet{PakP89Ecta}. Thus, while the theorem
provides the ``desired'' limiting distribution for our estimators,
it fails to provide a justification (or motivation) for the smoothing
proposed in this paper, and is in this sense unsatisfactory (or incomplete).
\end{rem}
 
\begin{rem}
Note that the order of jackknifing does not affect the limiting distribution
of the estimator: this has only a second-order effect, which vanishes
as $\lambda_{n}\goesto0$.
\end{rem}
 
\begin{rem}
\label{rem:twoaux}It is possible to define the LR estimator as the
minimizer of 
\[
Q_{n}^{\LR}(\beta)\defeq-\tilde{\L}_{n}(y;x,\abv{\theta}_{n}^{k}(\b,\lambda_{n})),
\]
where the average log-likelihood $\tilde{\L}_{n}$ need not correspond
to that maximized by $\hat{\theta}_{n}^{m}$, provided that the maximizers
of both $\L_{n}$ and $\tilde{\L}_{n}$ are consistent for the same
parameters. For example, $\hat{\theta}_{n}^{m}$ might be OLS (and
the associated residual covariance estimators), whereas $\tilde{\L}_{n}$
is the average log-likelihood for a SUR model. Suppose that the maximizer
$\tilde{\theta}_{n}$ of $\tilde{\L}_{n}$ satisfies the following
analogue of \eqref{psitophi},
\[
n^{1/2}(\tilde{\theta}_{n}-\theta_{0})=-\tilde{H}^{-1}\frac{1}{n^{1/2}}\sum_{i=1}^{n}\dot{\tilde{\like}}_{i}^{m}(\beta_{0},0;\theta_{0})+o_{p}(1)\wkc-\tilde{H}^{-1}\tilde{\phi}^{0},
\]
and define $\tilde{\varmat}\defeq\expect\tilde{\phi}^{0}\tilde{\phi}^{0\trans}$,
and $\tilde{\covmat}\defeq\expect\tilde{\phi}^{0}\phi^{m\trans}$
for $m\geq1$. Then the conclusions of \thmref{limitdist} continue
to hold, except that the $H^{-1}VH^{-1}$ appearing in \eqref{Omega}
must be replaced by
\begin{equation}
\tilde{H}^{-1}\tilde{\varmat}\tilde{H}^{-1}-(\tilde{H}^{-1}\tilde{\covmat}H^{-1}+H^{-1}\tilde{\covmat}^{\trans}\tilde{H}^{-1})+H^{-1}\left[\frac{1}{M}\varmat+\left(1-\frac{1}{M}\right)\covmat\right]H^{-1}\label{eq:innerpart}
\end{equation}
and $U_{\LR}=\tilde{H}$, where $\tilde{H}\defeq\expect\ddot{\tilde{\L}}_{n}(y,x;\theta_{0})$.
Regarding the estimation of these quantities, see \remref{twoauxest}
below. (Note that \eqref{innerpart} reduces to $H^{-1}VH^{-1}$ when
$\tilde{H}=H$, $\tilde{\varmat}=\varmat$ and $\tilde{\covmat}=\covmat$.)
\end{rem}

The proofs of \thmref{limitdist} and all other theorems in this paper
are given in \appref{mainproofs}.

\subsection{Convergence of smoothed derivatives and variance estimators\label{sub:performance}}

\thmref{limitdist} fails to indicate the possible benefits of smoothing,
because it simply posits the existence of a near-minimizer of $Q_{nk}$,
and thus entirely ignores how such a minimizer might be computed in
practice. Ideally, smoothing should be shown to facilitate the convergence
of derivative-based optimization procedures, when these are applied
to the problem of minimizing $Q_{nk}$, while still yielding an estimator
having the same limit distribution as in \thmref{limitdist}.

For the analysis of these procedures, the large-sample behavior of
the derivatives of $Q_{nk}$ will naturally play an important role.\footnote{Here, as throughout the remainder of this paper, we are concerned
exclusively with the limiting behavior of the \emph{exact} derivatives
of $Q_{nk}$, ignoring any errors that might be introduced by numerical
differentiation.} The uniform convergence of the derivatives of the sample binding
function -- and hence those of $Q_{nk}$ -- follows immediately from
\enuref{H:deriv}, and sufficient conditions for this convergence
are provided by \propref{sufficiency} above. Notably, when $l^{\prime}\in\{1,2\}$,
\eqref{lambdacond} imposes exactly the sort of lower bound on $\lambda_{n}$
that is absent from \thmref{limitdist}. 

\enuref{H:deriv}, with $l=1$, implies that the derivatives of the
smoothed criterion function can be used to estimate the Jacobian matrix
$G$ that appears in the limiting variances in \thmref{limitdist}.
The remaining components, $H$ and $V_{e}$, can be respectively estimated
using the data-based auxiliary log-likelihood Hessian, and an appropriate
transformation of the joint sample variance of all the auxiliary log-likelihood
scores (i.e.\ using both the data- and simulation-based models).
Define
\begin{gather*}
A^{\trans}\defeq\begin{bmatrix}I_{d_{\theta}} & -\tfrac{1}{M}I_{d_{\theta}} & \cdots & -\tfrac{1}{M}I_{d_{\theta}}\end{bmatrix}\\
s_{ni}^{\trans}\defeq\begin{bmatrix}\dot{\like}_{i}^{0}(\hat{\theta}_{n})^{\trans} & \dot{\like}_{i}^{1}(\hat{\beta}_{nk}^{e},\lambda_{n};\hat{\theta}_{n}^{1})^{\trans} & \cdots & \dot{\like}_{i}^{M}(\hat{\beta}_{nk}^{e},\lambda_{n};\hat{\theta}_{n}^{M})^{\trans}\end{bmatrix},
\end{gather*}
where $\hat{\theta}_{n}^{m}\defeq\hat{\theta}_{n}^{m}(\hat{\beta}_{nk}^{e},\lambda_{n})$,
and $\dot{\like}_{i}^{0}(\theta)$ denotes the gradient of $\like(y_{i},x_{i};\theta)$.
Then we have
\begin{thm}[variance estimation]
\label{thm:feasible} Suppose \enuref{H:deriv} holds with $l=0$.
Then
\begin{enumerate}
\item \label{enu:feas:hess}$\hat{H}_{n}\defeq\ddot{\Like}_{n}(\hat{\theta}_{n})\inprob H$; 
\item \label{enu:feas:score}$\hat{V}_{n}\defeq A^{\trans}\left(\frac{1}{n}\sum_{i=1}^{n}s_{ni}s_{ni}^{\trans}\right)A\inprob V$;
and
\end{enumerate}
if \enuref{H:deriv} holds with $l=1$, then
\begin{enumerate}[resume]
\item \label{enu:feas:G} $\hat{G}_{n}\defeq\partial_{\beta}\abv{\theta}_{n}(\hat{\beta}_{nk}^{e},\lambda_{n})\inprob G$,
for $e\in\{\Wald,\LR\}$.
\end{enumerate}
\end{thm}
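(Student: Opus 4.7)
The plan is to handle the three conclusions by combining the uniform-convergence statements in \enuref{H:unifcmpct} and \enuref{H:deriv} with the consistency of the parameter estimates at which we evaluate. A preliminary step is to establish that $\hat{\theta}_n \inprob \theta_0$ and, for each $m \in \{1,\ldots,M\}$, $\hat{\theta}_n^m \defeq \hat{\theta}_n^m(\hat{\beta}_{nk}^e,\lambda_n) \inprob \theta_0$. Consistency $\hat{\beta}_{nk}^e \inprob \beta_0$ is delivered by \thmref{limitdist}; combined with $\lambda_n \inprob 0$ from \enuref{R:smoothing} and continuity of $(\beta,\lambda) \elmap \theta(\beta,\lambda)$ at $(\beta_0,0)$ (\enuref{R:uniqueness}), this yields $\theta(\hat{\beta}_{nk}^e,\lambda_n) \inprob \theta_0$. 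The $O_p(n^{-1/2})$ approximation, uniform in $(\beta,\lambda)$, implicit in \enuref{H:stocheq} then gives $\hat{\theta}_n^m \inprob \theta_0$; applying the same statement with $m=0$ and $(\beta,\lambda)=(\beta_0,0)$ fixed delivers $\hat{\theta}_n \inprob \theta_0$.

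For part \enuref{feas:hess}, \enuref{H:twicediff} together with \enuref{H:unifcmpct} at $l=2$ gives $\ddot{\Like}_n(\theta) \inprob \ddot{\Like}(\theta)$ uniformly on compact subsets of $\intr\Theta$. Since $\theta_0 \in \intr\Theta$ (\enuref{R:scorevar}) and the limit is continuous at $\theta_0$, a standard argument yields $\hat{H}_n = \ddot{\Like}_n(\hat{\theta}_n) \inprob \ddot{\Like}(\theta_0) = H$. For part \enuref{feas:G}, \enuref{H:deriv} with $l=1$ gives $\sup_{\beta \in \Beta} \smlnorm{\partial_\beta \hat{\theta}_n^m(\beta,\lambda_n) - \partial_\beta \theta(\beta,0)} = o_p(1)$ for each $m$, which is inherited by the $M$-average $\partial_\beta \abv{\theta}_n(\beta,\lambda_n)$. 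Evaluating at $\hat{\beta}_{nk}^e \inprob \beta_0$ and invoking continuity of $\partial_\beta \theta(\cdot,0)$ at $\beta_0$ (guaranteed by \enuref{R:uniqueness}) then yields $\hat{G}_n \inprob G$, modulo the transpose convention of \remref{reparam}.

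Part \enuref{feas:score} requires slightly more care. By \enuref{H:unifcmpct}, for each pair $m_1,m_2 \in \{0,\ldots,M\}$ the average $n^{-1}\sum_i \dot{\like}_i^{m_1}(\beta_1,\lambda_1;\theta_1)\dot{\like}_i^{m_2}(\beta_2,\lambda_2;\theta_2)^{\trans}$ converges in probability, uniformly on $\Beta \times \Lambda$ and compact subsets of $\intr\Theta$, to its expectation, which is continuous in its arguments. Evaluating at the consistent estimates $(\hat{\beta}_{nk}^e, \lambda_n, \hat{\theta}_n^{m_1}, \hat{\theta}_n^{m_2})$ and using the i.i.d.\ structure \enuref{L:iid} to identify the expected per-observation outer products at $(\beta_0,0,\theta_0)$ with $\varmat$ (for $m_1=m_2$) and $\covmat$ (for $m_1 \neq m_2$) via \eqref{scorecovar}, one obtains
\[
\frac{1}{n}\sum_{i=1}^n s_{ni}s_{ni}^{\trans} \inprob \Omega_s,
\]
where $\Omega_s$ is the $(M+1)\times(M+1)$ block matrix with diagonal blocks $\varmat$ and off-diagonal blocks $\covmat$. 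Writing $A^{\trans}$ with block coefficients $c_0=1$ and $c_m=-1/M$ for $m \geq 1$ (so $\sum_m c_m = 0$), a direct calculation yields
\[
A^{\trans} \Omega_s A = \Bigl(\sum_m c_m^2\Bigr)\varmat + \Bigl(\sum_{m_1 \neq m_2} c_{m_1}c_{m_2}\Bigr)\covmat = \Bigl(1+\tfrac{1}{M}\Bigr)(\varmat - \covmat) = V,
\]
using $\sum_m c_m^2 = 1 + 1/M$ and $\sum_{m_1 \neq m_2} c_{m_1}c_{m_2} = \bigl(\sum_m c_m\bigr)^2 - \sum_m c_m^2 = -(1+1/M)$.

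The main bookkeeping challenge is in part \enuref{feas:score}: one must invoke the joint (in $(m_1,m_2)$) and uniform (in $(\beta,\lambda,\theta)$) convergence of \enuref{H:unifcmpct} in order to substitute the estimated parameters cleanly, and then execute the block algebra that generates the coefficient $1+1/M$. Parts \enuref{feas:hess} and \enuref{feas:G} are essentially immediate consequences of \enuref{H:unifcmpct} and \enuref{H:deriv} together with continuity of the limit objects at $\theta_0$ and $\beta_0$, respectively.
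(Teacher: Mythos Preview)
Your proposal is correct and follows essentially the same route as the paper: establish consistency of $\hat{\beta}_{nk}^{e}$, $\hat{\theta}_{n}$ and $\hat{\theta}_{n}^{m}$ via \thmref{limitdist} and \enuref{H:stocheq}, then combine the uniform convergence in \enuref{H:unifcmpct} and \enuref{H:deriv} with continuity of the limit objects. One small bookkeeping point: in part~\enuref{feas:score} you invoke \enuref{L:iid} to identify the per-observation outer-product expectations with $\varmat$ and $\covmat$, but the theorem is stated under Assumptions~\ref{ass:regularity} and~\ref{ass:highlevel} only; the paper instead appeals to \enuref{H:limitdist}, where $\varmat$ and $\covmat$ are defined directly as $\expect\phi_{n}^{m}\phi_{n}^{m\trans}$ and $\expect\phi_{n}^{m_{1}}\phi_{n}^{m_{2}\trans}$, and the identification with the single-observation moments is implicit in the high-level framework.
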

 
\begin{rem}
\label{rem:twoauxest} For the situation envisaged in \remref{twoaux},
so long as the auxiliary model corresponding to $\tilde{\Like}_{n}$
satisfies Assumptions~\ref{ass:regularity} and \ref{ass:highlevel},
a consistent estimate of \eqref{innerpart} can be produced in the
manner of \enuref{feas:score} above, if we replace $s_{ni}$ by
\[
\tilde{s}_{ni}^{\trans}\defeq\begin{bmatrix}\dot{\tilde{\like}}_{i}^{0}(\tilde{\theta}_{n})^{\trans}\hat{H}_{n}^{-1} & \dot{\like}_{i}^{1}(\hat{\beta}_{nk}^{\LR},\lambda_{n};\hat{\theta}_{n}^{1})^{\trans}\tilde{H}_{n}^{-1} & \cdots & \dot{\like}_{i}^{M}(\hat{\beta}_{nk}^{\LR},\lambda_{n};\hat{\theta}_{n}^{M})^{\trans}\tilde{H}_{n}^{-1}\end{bmatrix},
\]
where $\tilde{H}_{n}\defeq\ddot{\tilde{\Like}}_{n}(\tilde{\theta}_{n})$
is consistent for $U_{\LR}$.
\end{rem}

\subsection{Performance of derivative-based optimization procedures\label{sub:optperform}}

The potential gains from smoothing may be assessed by comparing the
performance of derivative-based optimization procedures, as they are
applied to each of the following:
\begin{enumerate}[{label=\textnormal{\smaller[0.76]{P\arabic*}}}]
\item \label{enu:smplprob}the smoothed sample problem, of minimizing $\beta\elmap Q_{nk}(\beta,\lambda_{n})$;
and
\item \label{enu:popprob}its population counterpart, of minimizing $\beta\elmap Q_{k}(\beta,0)$.
\end{enumerate}
Since $Q_{k}$ is automatically smooth (even when $\lambda=0$, owing
to the smoothing effected by the expectation operator), derivative-based
methods ought to be particularly suited to solving \enuref{popprob},
and we may regard their performance when applied to this problem as
representing an upper bound for their performance when applied to
\enuref{smplprob}.

In the following section, we shall discuss in detail the convergence
properties of three popular optimization routines: Gauss-Newton; quasi-Newton
with BFGS updating; and a trust-region method. But before coming to
these, we first provide a result that is of relevance to a broader
range of derivative-based optimization procedures. Since such procedures
will typically be designed to terminate at (near) roots of the first-order
conditions, 
\begin{align*}
\partial_{\beta}Q_{nk}^{e}(\beta,\lambda_{n}) & =0\text{ in \enuref{smplprob}} & \partial_{\beta}Q^{e}(\beta,0) & =0\text{ in \enuref{popprob}}
\end{align*}
for $e\in\{\Wald,\LR\}$, we shall provide conditions on $\lambda_{n}$,
under which, for some $c_{n}=o_{p}(1)$,
\begin{enumerate}
\item \label{enu:roots}the set $R_{nk}^{e}\defeq\{\beta\in\Beta\mid\smlnorm{\partial_{\beta}Q_{nk}^{e}(\beta,\lambda_{n})}\leq c_{n}\}$
of near roots is ``consistent'' for subsets of $R^{e}\defeq\{\beta\in\Beta\mid\partial_{\beta}Q^{e}(\beta,0)=0\}$;
and
\item \label{enu:rootlim}if $c_{n}=o_{p}(n^{-1/2})$, then any $\tilde{\beta}_{n}\in R_{nk}^{e}$
with $\tilde{\beta}_{n}\inprob\beta_{0}$ has the limiting distribution
given by \thmref{limitdist}.
\end{enumerate}
We interpret \enuref{roots} as saying that smoothing yields a sample
problem \enuref{smplprob} that is ``no more difficult'' than the
population problem \enuref{popprob}, in the sense that the set of
points to which derivative-based optimizers may converge to in \enuref{smplprob}
approximates its counterpart in \enuref{popprob}, as $n\goesto\infty$.
This is the strongest consistency result we can hope to prove here:
as $Q$ may have multiple stationary points, only one of which coincides
with its (assumed interior) global minimum, it cannot generally be
true that the whole of $R_{nk}^{e}$ will be consistent for $\beta_{0}$.
On the other hand, if we can select a consistent sequence of (near)
roots from $R_{nk}^{e}$, as in \enuref{rootlim}, then we may reasonably
hope that this estimator sequence will enjoy the same limiting distribution
as a (near) minimizer of $Q_{nk}$.

For $A,B\subseteq\Beta$, let $d_{L}(A,B)\defeq\sup_{a\in A}d(a,B)$
denote the one-sided distance from $A$ to $B$, which has the property
that $d_{L}(A,B)=0$ if and only if $A\subseteq B$. Recall the definition
of $\hat{\beta}_{nk}^{e}$ given in \eqref{nearmin} above. Properties
\enuref{roots} and \enuref{rootlim} above can be more formally expressed
as follows.
\begin{thm}[near roots]
\label{thm:rootdist} Suppose \enuref{H:deriv} holds with $l=1$.
Then
\begin{enumerate}
\item \label{enu:setcons} $R_{nk}^{e}$ is nonempty w.p.a.1., and $d_{L}(R_{nk}^{e},R^{e})\inprob0$;
\item \label{enu:consistentroot}if $c_{n}=o_{p}(n^{-1/2})$, $\tilde{\beta}_{n}\in R_{nk}^{e}$
and $\tilde{\beta}_{n}\inprob\beta_{0}$, then $n^{1/2}(\tilde{\beta}_{n}-\hat{\beta}_{nk}^{e})=o_{p}(1)$,
and so $\tilde{\beta}_{n}$ has the limiting distribution given by
\thmref{limitdist}; and
\item \label{enu:consistexists} any $\tilde{\beta}_{n}\in R_{nk}^{e}$
satisfying $Q_{nk}^{e}(\tilde{\beta}_{n})\leq\inf_{\beta\in R_{nk}^{e}}Q_{nk}^{e}(\beta)+o_{p}(1)$
has $\tilde{\beta}_{n}\inprob\beta_{0}$.
\end{enumerate}
\end{thm}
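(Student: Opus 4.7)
The plan is to exploit the product structure of $\partial_{\beta}Q_{nk}^{e}$, which makes only \emph{first}-order derivatives of $\abv{\theta}_{n}^{k}$ essential, so that every step can be carried out under \enuref{H:deriv} with $l=1$. The main obstacle will be part (ii), which at first glance appears to demand second-order convergence of the binding function; I plan to circumvent this via a mean-value identity for $\abv{\theta}_{n}^{k}$ rather than a Taylor expansion of the gradient itself.

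For part (i), nonemptiness follows from \thmref{limitdist}: the near-minimizer $\hat{\beta}_{nk}^{e}$ is $\sqrt{n}$-consistent for $\beta_{0}\in\intr\Beta$, hence lies in the interior w.p.a.1. Evaluating $\partial_{\beta}Q_{nk}^{e}$ directly (for Wald this is $2[\partial_{\beta}\abv{\theta}_{n}^{k}(\beta,\lambda_{n})]^{\trans}W_{n}[\abv{\theta}_{n}^{k}(\beta,\lambda_{n})-\hat{\theta}_{n}]$; the LR form is analogous via the chain rule) at $\hat{\beta}_{nk}^{e}$ and using $\abv{\theta}_{n}^{k}(\hat{\beta}_{nk}^{e},\lambda_{n})-\hat{\theta}_{n}=O_{p}(n^{-1/2})$ together with $\partial_{\beta}\abv{\theta}_{n}^{k}(\hat{\beta}_{nk}^{e},\lambda_{n})=G^{\trans}+o_{p}(1)$ yields $\smlnorm{\partial_{\beta}Q_{nk}^{e}(\hat{\beta}_{nk}^{e},\lambda_{n})}=O_{p}(n^{-1/2})$, so $\hat{\beta}_{nk}^{e}\in R_{nk}^{e}$ w.p.a.1.\ for any $c_{n}$ that dominates $n^{-1/2}$. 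The one-sided distance claim comes from the uniform convergence $\sup_{\beta\in\Beta}\smlnorm{\partial_{\beta}Q_{nk}^{e}(\beta,\lambda_{n})-\partial_{\beta}Q^{e}(\beta,0)}\inprob0$, which combines \enuref{H:deriv} ($l=1$), \enuref{H:stocheq}, and \enuref{H:unifcmpct} via Slutsky. A standard subsequence/compactness argument then forces every accumulation point of any $\beta_{n}\in R_{nk}^{e}$ to satisfy $\partial_{\beta}Q^{e}(\cdot,0)=0$ by continuity, hence to lie in $R^{e}$.

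The main step is (ii). Rather than Taylor-expanding $\partial_{\beta}Q_{nk}^{e}$ around $\hat{\beta}_{nk}^{e}$ (which would invoke the sample Hessian and thus require $l=2$), I subtract the two approximate first-order conditions and use the product structure. For Wald,
\begin{align*}
o_{p}(n^{-1/2}) & =\partial_{\beta}Q_{nk}^{e}(\tilde{\beta}_{n},\lambda_{n})-\partial_{\beta}Q_{nk}^{e}(\hat{\beta}_{nk}^{e},\lambda_{n})\\
 & =2[\partial_{\beta}\abv{\theta}_{n}^{k}(\tilde{\beta}_{n},\lambda_{n})]^{\trans}W_{n}[\abv{\theta}_{n}^{k}(\tilde{\beta}_{n},\lambda_{n})-\abv{\theta}_{n}^{k}(\hat{\beta}_{nk}^{e},\lambda_{n})]\\
 & \quad+2[\partial_{\beta}\abv{\theta}_{n}^{k}(\tilde{\beta}_{n},\lambda_{n})-\partial_{\beta}\abv{\theta}_{n}^{k}(\hat{\beta}_{nk}^{e},\lambda_{n})]^{\trans}W_{n}[\abv{\theta}_{n}^{k}(\hat{\beta}_{nk}^{e},\lambda_{n})-\hat{\theta}_{n}].
\end{align*}
The mean-value identity $\abv{\theta}_{n}^{k}(\tilde{\beta}_{n},\lambda_{n})-\abv{\theta}_{n}^{k}(\hat{\beta}_{nk}^{e},\lambda_{n})=\bar{G}_{n}^{\trans}(\tilde{\beta}_{n}-\hat{\beta}_{nk}^{e})$, with $\bar{G}_{n}$ an integrated Jacobian converging in probability to $G^{\trans}$ by \enuref{H:deriv} ($l=1$) and continuity of $\partial_{\beta}\theta(\cdot,0)$, reduces the first line to $2G^{\trans}WG(\tilde{\beta}_{n}-\hat{\beta}_{nk}^{e})(1+o_{p}(1))$. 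The second line is $o_{p}(1)\cdot O_{p}(n^{-1/2})=o_{p}(n^{-1/2})$ because the Jacobian difference is $o_{p}(1)$ (uniform convergence evaluated at two sequences both collapsing to $\beta_{0}$) and $\abv{\theta}_{n}^{k}(\hat{\beta}_{nk}^{e},\lambda_{n})-\hat{\theta}_{n}=O_{p}(n^{-1/2})$. Positive definiteness of $G^{\trans}WG$---from $W\succ0$ and $G$ of full column rank by \enuref{R:injective}---then delivers $\tilde{\beta}_{n}-\hat{\beta}_{nk}^{e}=o_{p}(n^{-1/2})$, and Slutsky combined with \thmref{limitdist} gives the stated limit. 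The LR case follows the same template after a first-order expansion of $\partial_{\theta}\Like_{n}(y,x;\abv{\theta}_{n}^{k}(\beta,\lambda_{n}))$ around $\hat{\theta}_{n}$ (where the score vanishes), which replaces the middle factor by $H+o_{p}(1)$ and produces the effective Hessian $-G^{\trans}HG\succ0$.

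For part (iii), the argument is topological. From (i), $\hat{\beta}_{nk}^{e}\in R_{nk}^{e}$ w.p.a.1., and by uniform convergence of $Q_{nk}^{e}(\cdot,\lambda_{n})$ to $Q^{e}(\cdot,0)$ together with $\hat{\beta}_{nk}^{e}\inprob\beta_{0}$, we have $\inf_{\beta\in R_{nk}^{e}}Q_{nk}^{e}(\beta,\lambda_{n})\leq Q_{nk}^{e}(\hat{\beta}_{nk}^{e},\lambda_{n})=Q^{e}(\beta_{0},0)+o_{p}(1)$. The near-optimality hypothesis on $\tilde{\beta}_{n}$ then gives $Q^{e}(\tilde{\beta}_{n},0)=Q^{e}(\beta_{0},0)+o_{p}(1)$, and since $\beta_{0}$ is the unique minimizer of the continuous map $\beta\elmap Q^{e}(\beta,0)$ on the compact set $\Beta$ (by \enuref{R:injective}, for both $e\in\{\Wald,\LR\}$), a standard argmin-consistency argument yields $\tilde{\beta}_{n}\inprob\beta_{0}$.
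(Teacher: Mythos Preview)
Your proof is essentially correct and, for parts~\enuref{setcons} and~\enuref{consistexists}, follows the paper almost verbatim. Part~\enuref{consistentroot}, however, proceeds by a genuinely different route. The paper does not subtract the two first-order conditions; instead it expands $n^{1/2}\dot{Q}_{nk}^{e}(\tilde{\beta}_{n})$ alone around $\beta_{0}$, writing $n^{1/2}[\abv{\theta}_{n}^{k}(\tilde{\beta}_{n})-\hat{\theta}_{n}]=Z_{n}+G\delta_{n}+o_{p}(1+\smlnorm{\delta_{n}})$ via the stochastic-equicontinuity result \enuref{H:stocheq} (through \propref{jackbind}\enuref{stocdiff}), and then matches the resulting representation $n^{1/2}(\tilde{\beta}_{n}-\beta_{0})=-(G^{\trans}U_{e}G)^{-1}G^{\trans}U_{e}Z_{n}+o_{p}(1)$ with the identical expression~\eqref{hatbetalimit} already derived for $\hat{\beta}_{nk}^{e}$ in the proof of \thmref{limitdist}. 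Your device---a mean-value expansion of $\abv{\theta}_{n}^{k}$ between the two points, driven directly by \enuref{H:deriv} with $l=1$---is more self-contained and avoids the detour through \enuref{H:stocheq}, at the cost of working with the sample Jacobian rather than its population limit.

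There is one gap you should close. You write $o_{p}(n^{-1/2})=\partial_{\beta}Q_{nk}^{e}(\tilde{\beta}_{n})-\partial_{\beta}Q_{nk}^{e}(\hat{\beta}_{nk}^{e})$, but $\hat{\beta}_{nk}^{e}$ is only a \emph{near}-minimizer in the sense of~\eqref{nearmin}, and in part~\enuref{setcons} you yourself only established $\smlnorm{\partial_{\beta}Q_{nk}^{e}(\hat{\beta}_{nk}^{e})}=O_{p}(n^{-1/2})$, not $o_{p}(n^{-1/2})$. The fix is immediate: run your subtraction argument with the \emph{exact} minimizer (which is interior w.p.a.1.\ by \propref{unifcvg}, hence has vanishing gradient) in place of $\hat{\beta}_{nk}^{e}$, and then recover the stated conclusion for an arbitrary near-minimizer by noting that both share the linearization~\eqref{hatbetalimit} from \thmref{limitdist}, whence they differ by $o_{p}(n^{-1/2})$. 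The same replacement also repairs your nonemptiness claim in part~\enuref{setcons}, which as written covers only those $c_{n}$ dominating $n^{-1/2}$.
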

\begin{rem}
Of course, the requirement that $\tilde{\beta}_{n}\inprob\beta_{0}$
cannot be verified in practice; but one may hope to satisfy it by
running the optimization routine from $L$ different starting points
located throughout $\Beta$, obtaining a collection of terminal values
$\{\beta_{nl}\}_{l=1}^{L}$, and then setting $\tilde{\beta}_{n}=\beta_{nl}$
such that $Q_{nk}(\tilde{\beta}_{n})\leq Q_{nk}(\beta_{nl^{\prime}})$
for all $l^{\prime}\in\{1,\ldots,L\}$.
\end{rem}

Some optimization routines, such as the trust-region method considered
in the next section, may only be allowed to terminate when the second-order
conditions for a minimum are also satisfied. Defining
\begin{align}
S_{nk}^{e}\defeq\{\beta & \in R_{nk}^{e}\mid\varrho_{\min}[\partial_{\beta}^{2}Q_{nk}^{e}(\beta,\lambda_{n})]\geq0\} & S^{e} & \defeq\{\beta\in R^{e}\mid\varrho_{\min}[\partial_{\beta}^{2}Q^{e}(\beta,0)]\geq0\},\label{eq:S}
\end{align}
we have the following
\begin{thm}[near roots satisfying second-order conditions]
\label{thm:root2nd} Suppose \enuref{H:deriv} holds with $l=2$.
Then parts \enuref{setcons} and \enuref{consistentroot} of \thmref{rootdist}
hold with $S_{nk}^{e}$ and $S^{e}$ in place of $R_{nk}^{e}$ and
$R^{e}$ respectively.
\end{thm}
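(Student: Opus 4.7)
The plan is to adapt the arguments used for \thmref{rootdist} by bringing second-derivative information to bear. Under \enuref{H:deriv} with $l=2$, an application of the chain rule (via \remref{reparam}) yields, for both $l\in\{1,2\}$, the uniform convergence
\[
\sup_{\beta\in\Beta}\smlnorm{\partial_{\beta}^{l}Q_{nk}^{e}(\beta,\lambda_{n})-\partial_{\beta}^{l}Q^{e}(\beta,0)}=o_{p}(1).
\]
The $l=1$ piece is what drove the proof of \thmref{rootdist}; the $l=2$ piece is the new ingredient needed to handle the second-order condition defining $S_{nk}^{e}$ and $S^{e}$.

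For the non-emptiness portion of part \enuref{setcons}, I would not try to use $\hat{\beta}_{nk}^{e}$ directly (it is only a near-minimizer, and exact first-order stationarity cannot be guaranteed from the $o_{p}(n^{-1})$ slack in \eqref{nearmin}). Instead, because $Q_{nk}^{e}(\cdot,\lambda_{n})$ is continuous on the compact set $\Beta$, it attains its minimum at some exact minimizer $\beta_{n}^{*}$. This $\beta_{n}^{*}$ is itself a (trivial) near-minimizer, so the consistency argument of \thmref{rootdist}\enuref{consistexists}, applied to $\beta_{n}^{*}$ rather than $\hat{\beta}_{nk}^{e}$, yields $\beta_{n}^{*}\inprob\beta_{0}\in\intr\Beta$ (using \enuref{R:correct} and \enuref{R:injective}). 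Consequently $\beta_{n}^{*}\in\intr\Beta$ w.p.a.1, and the first- and second-order necessary conditions for an interior minimum of the twice-differentiable function $Q_{nk}^{e}(\cdot,\lambda_{n})$ give $\partial_{\beta}Q_{nk}^{e}(\beta_{n}^{*},\lambda_{n})=0$ and $\varrho_{\min}[\partial_{\beta}^{2}Q_{nk}^{e}(\beta_{n}^{*},\lambda_{n})]\geq0$; hence $\beta_{n}^{*}\in S_{nk}^{e}$.

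For the distance claim $d_{L}(S_{nk}^{e},S^{e})\inprob0$, I would argue by contradiction along a subsequence: if the distance exceeds some fixed $\epsilon>0$ on a subsequence, pick $\beta_{n}\in S_{nk}^{e}$ with $d(\beta_{n},S^{e})>\epsilon$, extract a further convergent subsequence $\beta_{n}\goesto\beta^{*}\in\Beta$ by compactness, then pass to the limit on both defining conditions of $S_{nk}^{e}$. The gradient constraint $\smlnorm{\partial_{\beta}Q_{nk}^{e}(\beta_{n},\lambda_{n})}\leq c_{n}=o_{p}(1)$ together with the uniform convergence above and the continuity of $\partial_{\beta}Q^{e}(\cdot,0)$ gives $\partial_{\beta}Q^{e}(\beta^{*},0)=0$. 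The Hessian constraint, combined with the uniform convergence of $\partial_{\beta}^{2}Q_{nk}^{e}$ and the continuity (by Weyl's inequality) of $\varrho_{\min}$ on the symmetric matrices, gives $\varrho_{\min}[\partial_{\beta}^{2}Q^{e}(\beta^{*},0)]\geq0$; so $\beta^{*}\in S^{e}$, contradicting $d(\beta_{n},S^{e})>\epsilon$.

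Part \enuref{consistentroot} then requires no new work: since $S_{nk}^{e}\subseteq R_{nk}^{e}$, any $\tilde{\beta}_{n}\in S_{nk}^{e}$ with $\tilde{\beta}_{n}\inprob\beta_{0}$ and $c_{n}=o_{p}(n^{-1/2})$ is in particular a near-root in $R_{nk}^{e}$, and \thmref{rootdist}\enuref{consistentroot} directly yields $n^{1/2}(\tilde{\beta}_{n}-\hat{\beta}_{nk}^{e})=o_{p}(1)$, so the limiting distribution of \thmref{limitdist} is inherited. The main obstacle is really the non-emptiness step — the second-order condition rules out using the near-minimizer $\hat{\beta}_{nk}^{e}$ as a witness, which is why one must pass to an exact minimizer and verify that it lies in the interior of $\Beta$ with probability approaching one; all other steps are routine continuity arguments enabled by the uniform convergence of first and second derivatives granted by \enuref{H:deriv} with $l=2$.
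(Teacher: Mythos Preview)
Your proposal is correct and follows essentially the same route as the paper's proof: use the uniform convergence of $\partial_{\beta}^{2}Q_{nk}^{e}$ (from \enuref{H:deriv} with $l=2$, via \propref{derivcvg}\enuref{Qderiv}) together with the first-derivative result already established in \thmref{rootdist}\enuref{setcons}, and then invoke $S_{nk}^{e}\subseteq R_{nk}^{e}$ for part~\enuref{consistentroot}. The paper is terser --- it simply notes that $\varrho_{\min}[\ddot{Q}^{e}(\tilde{\beta}_{n})]\geq -o_{p}(1)$ for any $\tilde{\beta}_{n}\in S_{nk}^{e}$ and then appeals to ``continuity'' to conclude $d(\tilde{\beta}_{n},S^{e})\inprob 0$ --- whereas you spell out the underlying compactness/subsequence argument explicitly. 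One small point: for non-emptiness you need not route through \thmref{rootdist}\enuref{consistexists} (which presupposes membership in $R_{nk}^{e}$); the argument in the proof of \thmref{rootdist}\enuref{setcons} already shows that the global minimizer of $Q_{nk}^{e}$ is interior w.p.a.1., and at an interior minimum of a twice-differentiable function both the first- and second-order necessary conditions hold, giving $\beta_{n}^{*}\in S_{nk}^{e}$ directly.
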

 
\begin{rem}
The utility of this result may be seen by considering a case in which
$Q$ has many stationary points, but only a single local minimum at
$\beta_{0}$. Then while \thmref{rootdist} only guarantees convergence
to one of these stationary points, \thmref{root2nd} ensures consistency
for $\beta_{0}$ -- at a cost of requiring that the routine also check
the second-order conditions for a minimum. This is why stronger conditions
must be imposed on $\lambda_{n}$ in \thmref{root2nd}; we now need
the \emph{second} derivatives of $Q_{nk}$ to provide reliable information
about the curvature of $Q_{k}$ in large samples.
\end{rem}

\subsection{Convergence results for specific procedures\label{sub:convergence}}

Our final result concerns the question of whether certain optimization
routines, if initialized from within an appropriate region of the
parameter space and iterated to convergence, will yield the maximizer
of $Q_{nk}$, and thus an estimator having the limiting distribution
displayed in \thmref{limitdist}. In some respects, our work here
is related to previous work on $k$-step estimators, which studies
the limiting behavior of estimators computed as the outcome of a sequence
of quasi-Newton iterations (see e.g.\ \citealp{Rob88Ecta}). However,
we shall depart from that literature in an important respect, by \emph{not}
requiring that our optimization routines be initialized by a sequence
of starting values $\beta_{n}^{(0)}$ that are assumed consistent
for $\beta_{0}$ (often at some rate). Rather, we shall require only
that $\beta_{n}^{(0)}\in\Beta_{0}\subset\Beta$ for a \emph{fixed}
region $\Beta_{0}$ satisfying the conditions noted below.

We consider two popular line-search optimization methods -- Gauss-Newton,
and quasi-Newton with BFGS updating -- as well as a trust-region algorithm.
When applied to the problem of minimizing an objective $Q$, each
of these routines proceed as follows: given an iterate $\beta^{(s)}$,
locally approximate $Q$ by the following quadratic model,
\begin{equation}
f_{(s)}(\beta)\defeq Q(\beta^{(s)})+\nabla_{(s)}^{\trans}(\beta-\beta^{(s)})+\tfrac{1}{2}(\beta-\beta^{(s)})^{\trans}\Delta_{(s)}(\beta-\beta^{(s)}),\label{eq:fapprox}
\end{equation}
where $\nabla_{(s)}\defeq\partial_{\beta}Q(\beta^{(s)})$. A new iterate
$\beta^{(s+1)}$ is then generated by approximately minimizing $f_{(s)}$
with respect to $\beta$. The main differences between these procedures
concern the choice of approximate Hessian $\Delta_{(s)}$, and the
manner in which $f_{(s)}$ is (approximately) minimized. A complete
specification of each of the methods considered here is provided in
\appref{routines} (see also \citealp{Fletch87}, and \citealp{NW06});
note that the Gauss-Newton method can only be applied to the Wald
criterion function, since only this criterion has the least-squares
form required by that method.

We shall impose the following conditions on the population criterion
$Q$, which are sufficient to ensure that each of these procedures,
once started from some $\beta^{(0)}\in\Beta_{0}$, will converge to
the global minimizer of $Q$. As noted above, since $Q$ may have
many other stationary points, $\Beta_{0}$ must be chosen so as to
exclude these (except when the trust region method is used); hence
our convergence results are of an essentially local character. (Were
we to relax this condition on $\Beta_{0}$, then the arguments yielding
\thmref{optim} below could be modified to establish that these procedures
always converge to \emph{some} stationary point of $Q$.) To state
our conditions, let $\sigma_{\min}(D)\defeq\varrho_{\min}^{1/2}(D^{\trans}D)$
denote the smallest singular value of a (possibly non-square) matrix
$D$, and recall $G(\beta)=[\partial_{\beta}\bind(\beta,0)]^{\trans}$,
the Jacobian of the binding function.

\setcounter{assumption}{14}
\begin{assumption}[optimization routines]
\label{ass:routine} Let $Q\in\{Q_{k}^{\Wald},Q_{k}^{\LR}\}$. Then
$\Beta_{0}=\Beta_{0}(Q)$ may be chosen as any compact subset of $\intr\Beta$
for which $\beta_{0}\in\intr\Beta_{0}$ and $\Beta_{0}=\{\beta\in\Beta\mid Q(\beta)\leq Q(\beta_{1})\}$
for some $\beta_{1}\in\Beta$; and either
\begin{enumerate}
\item[\GN{}]  $\smlnorm{G(\beta)^{\trans}Wg(\beta)}\neq0$ for all $\beta\in\Beta_{0}\backslash\{\beta_{0}\}$
and $\inf_{\beta\in\Beta_{0}}\sigma_{\min}[G(\beta)]>0$;
\item[\QN{}] $Q$ is strictly convex on $\Beta_{0}$; or
\item[\TR{}]  for every $\beta\in\Beta_{0}\backslash\{\beta_{0}\}$, $\smlnorm{\partial_{\beta}Q(\beta)}=0$
implies $\varrho_{\min}[\partial_{\beta}^{2}Q(\beta)]<0$.
\end{enumerate}
\end{assumption}
\begin{rem}
Note that $\smlnorm{G(\beta)^{\trans}Wg(\beta)}\neq0$ is equivalent
to $\smlnorm{\partial_{\beta}Q_{k}^{\Wald}(\beta)}\neq0$. Both \GN{}
and \QN{} thus imply that $Q$ has no stationary points in $\Beta_{0}$,
other than that which corresponds to the minimum at $\beta_{0}$.
\TR{}, on the other hand, permits such points to exist, provided
that they are not local minima. In this respect, it places the weakest
conditions on $Q$, and does so because the trust-region method utilizes
second-derivative information in a manner that the other two methods
do not.
\end{rem}

Before analyzing the convergence properties of these optimization
routines, we must first specify the conditions governing their termination.
Let $\{\beta^{(s)}\}$ denote the sequence of iterates generated by
a given routine $r$, from some starting point $\beta^{(0)}$. When
$r\in\{\gn,\qn\}$, we shall allow the optimization to terminate at
the first $s$ -- denoted $s^{\ast}$ -- for which a near root is
located, in the sense that $\smlnorm{\partial_{\beta}Q_{nk}^{e}(\beta^{(s)})}\leq c_{n}$,
where $c_{n}=o_{p}(n^{-1/2})$. That is, $s^{\ast}$ is the smallest
$s$ for which $\beta^{(s)}\in R_{nk}^{e}$. This motivates the definition,
for $r\in\{\gn,\qn\}$, of 
\begin{equation}
\abv{\beta}_{nk}^{e}(\beta^{(0)},r)\defeq\begin{cases}
\beta^{(s^{\ast})} & \text{if }\beta^{(s)}\in R_{nk}^{e}\text{ for some }s\in\naturals\\
\beta^{(0)} & \text{otherwise},
\end{cases}\label{eq:terminal}
\end{equation}
which describes the terminal value of the optimization routine, with
the convention that this is set to $\beta^{(0)}$ if a near root is
never located. In the case that $r=\tn$, we shall allow the routine
to terminate only at those near roots at which the second-order sufficient
conditions for a local minimum are also satisfied. In this way, $s^{\ast}$
now becomes the smallest $s$ for which $\beta^{(s)}\in S_{nk}^{\tn}$,
and $\abv{\beta}_{nk}^{e}(\beta^{(0)},\tn)$ may be defined exactly
as in \eqref{terminal}, except with $S_{nk}^{\tn}$ in place of $R_{nk}^{\tn}$.\footnote{It may be asked why we do not also propose checking the second-order
conditions upon termination when $r\in\{\gn,\qn\}$. Such a modification
is certainly possible, but is perhaps of doubtful utility. Consider
the problem of minimizing some (deterministic) criterion function
that has multiple roots, only one of which corresponds to a local
(and also global) minimum, a scenario envisaged in \TR{}. In this
case, the best we can hope to prove is that the Gauss-Newton and quasi-Newton
routines will have \emph{some} of those roots as points of accumulation,
but they might \emph{never} enter the vicinity of the local minimum
(see Theorems~6.5 and 10.1 in \citealp{NW06}). On the other hand,
the trust-region algorithm considered here is guaranteed to have the
local minimum as a point of accumulation, under certain conditions
(see \citealp{MS83SIAM}, Theorem~4.13).}

For the purposes of the next result, let $\hat{\beta}_{nk}^{e}$ denote
the exact minimizer of $Q_{nk}^{e}$.
\begin{thm}[derivative-based optimizers]
\label{thm:optim} Suppose $r\in\{\gn,\qn,\tn\}$ and $e\in\{\Wald,\LR\}$,
and that the corresponding part of \assref{routine} holds for some
$\Beta_{0}$. Then 
\[
\sup_{\beta^{(0)}\in\Beta}\smlnorm{\abv{\beta}_{nk}^{e}(\beta^{(0)},r)-\hat{\beta}_{nk}^{e}}=o_{p}(n^{-1/2})
\]
holds if either
\begin{enumerate}
\item $(r,e)=(\gn,\Wald)$ and \enuref{H:deriv} holds with $l=1$; or
\item $r\in\{\qn,\tn\}$ and \enuref{H:deriv} holds with $l=2$.
\end{enumerate}
\end{thm}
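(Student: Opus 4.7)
My plan is to combine three ingredients: (a) uniform convergence of $Q_{nk}^{e}$ and its derivatives (on $\Beta$) to the smooth population criterion $Q_{k}^{e}(\cdot,0)$; (b) convergence of the \emph{population} iterates of each routine, starting from arbitrary $\beta^{(0)}\in\Beta_{0}$, to the global minimizer $\beta_{0}$; (c) the already-established consequences of being a near-root (Theorems~\ref{thm:rootdist} and \ref{thm:root2nd}) to pin down the terminal value in the sample problem. Throughout, let $\bind_{n}(\beta)\defeq\abv{\theta}_{n}^{k}(\beta,\lambda_{n})$ and $\bind(\beta)\defeq\theta(\beta,0)$, so that both $Q_{nk}^{e}$ and $Q_{k}^{e}(\cdot,0)$ are smooth composites of $\bind_{n}$ (resp.\ $\bind$) with fixed smooth maps; by Assumption~\enuref{H:deriv} (with $l=1$ or $l=2$, as required) and the chain rule applied via Remark~\ref{rem:reparam}, one obtains $\sup_{\beta\in\Beta}\smlnorm{\partial_{\beta}^{j}Q_{nk}^{e}(\beta,\lambda_{n})-\partial_{\beta}^{j}Q_{k}^{e}(\beta,0)}=o_{p}(1)$ for $j\in\{0,\ldots,l\}$.

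The first substantive step is to establish convergence of the population iterates $\{\beta_{\infty}^{(s)}\}$, generated by applying each of the routines to $Q\defeq Q_{k}^{e}(\cdot,0)$ from any $\beta^{(0)}\in\Beta_{0}$. Under \GN{} the full-rank condition $\sigma_{\min}[G(\beta)]>0$ guarantees that the Gauss-Newton direction is well-defined and is a direction of descent, while the absence of spurious stationary points in $\Beta_{0}\backslash\{\beta_{0}\}$ forces the only point of accumulation to be $\beta_{0}$ (see Theorem~10.1 in \citet{NW06}). Under \QN{}, strict convexity combined with the descent property of the BFGS line-search yields global convergence to the unique minimum (Theorem~6.5 in \citet{NW06}). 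Under \TR{}, standard trust-region theory (\citealp{MS83SIAM}, Theorem~4.13) delivers convergence to a second-order critical point, which \TR{} forces to be $\beta_{0}$. In each case I would extract, by the Heine–Borel covering argument on the compact level set $\Beta_{0}$, a \emph{uniform} convergence rate: for every $\epsilon>0$ there exists $S=S(\epsilon)$ with $\sup_{\beta^{(0)}\in\Beta_{0}}\smlnorm{\beta_{\infty}^{(S)}-\beta_{0}}<\epsilon$.

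The second substantive step is a sample-to-population tracking argument. Each iteration $\beta^{(s)}\mapsto\beta^{(s+1)}$ is a measurable function of the iterate and of finitely many derivatives of the objective at that iterate (plus, for quasi-Newton, the accumulated approximate Hessian $\Delta_{(s)}$ — whose update rule is likewise a continuous function of these derivatives). Using the uniform convergence from step~(a) together with continuity of each routine's update map on the set where \assref{routine} holds, induction on $s$ gives $\sup_{\beta^{(0)}\in\Beta_{0}}\smlnorm{\beta_{n}^{(s)}-\beta_{\infty}^{(s)}}=o_{p}(1)$ for every fixed $s$. Combining with step~(c) of the previous paragraph, for arbitrarily small $\epsilon$ the sample iterates land in $B(\beta_{0},\epsilon)$ within $S(\epsilon)$ steps, w.p.a.1., uniformly in $\beta^{(0)}$. (For $\beta^{(0)}\in\Beta\backslash\Beta_{0}$, the monotone descent property — $Q(\beta^{(s+1)})\le Q(\beta^{(s)})$ — combined with the level-set characterization of $\Beta_{0}$ forces $\beta_{n}^{(1)}\in\Beta_{0}$ w.p.a.1., reducing to the previous case.)

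Once the iterate lies in a small neighborhood of $\beta_{0}$ the standard local convergence analyses of each routine (quadratic for Gauss-Newton under \GN{} and for trust-region once the trust radius is inactive under \TR{}; superlinear for BFGS under \QN{} and a Wolfe line-search) apply to the population problem. By the uniform convergence of derivatives (now up to order $l$), these local rates are inherited by the sample routine modulo an $o_{p}(1)$ perturbation, so the termination criterion $\smlnorm{\partial_{\beta}Q_{nk}^{e}(\beta_{n}^{(s)})}\le c_{n}=o_{p}(n^{-1/2})$ is triggered in finitely many further iterations, w.p.a.1., again uniformly in $\beta^{(0)}$. At termination $\abv{\beta}_{nk}^{e}\in R_{nk}^{e}$ (resp.\ $S_{nk}^{e}$ for $r=\tn$) with $\abv{\beta}_{nk}^{e}\inprob\beta_{0}$, whence Theorem~\ref{thm:rootdist}\enuref{consistentroot} (resp.\ Theorem~\ref{thm:root2nd}) delivers $n^{1/2}(\abv{\beta}_{nk}^{e}-\hat{\beta}_{nk}^{e})=o_{p}(1)$, uniformly in $\beta^{(0)}\in\Beta$.

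The main obstacle I foresee is the uniformity over starting points in the global phase: a naive bound on $\smlnorm{\beta_{n}^{(s)}-\beta_{\infty}^{(s)}}$ can degrade geometrically in $s$, whereas the number of iterations needed before local convergence kicks in depends on $\beta^{(0)}$. The cleanest remedy is to exploit monotone descent and the compactness of the level set $\Beta_{0}$: rather than track iterates pointwise, one propagates the uniform bound $\sup_{\beta\in\Beta}\smlabs{Q_{nk}^{e}(\beta)-Q_{k}^{e}(\beta,0)}=o_{p}(1)$ through the descent inequality to conclude that the sample iterate enters any prescribed neighborhood of $\beta_{0}$ in a number of steps $S(\epsilon)$ depending only on $\epsilon$, not on $\beta^{(0)}$.
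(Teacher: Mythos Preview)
Your approach differs substantially from the paper's. You first analyze the optimization routines on the \emph{population} criterion $Q_k^e(\cdot,0)$ and then attempt a step-by-step tracking argument to transfer convergence to the sample iterates. The paper takes a much more direct route: it shows that, with probability approaching one, the \emph{sample} criterion $Q_{nk}^e(\cdot,\lambda_n)$ itself satisfies the deterministic hypotheses of the relevant convergence theorem on $\Beta_0$ --- full-rank $G_n(\beta)$ for Gauss--Newton, strict convexity of $Q_{nk}^e$ for quasi-Newton --- by transferring these properties from $Q^e$ via the uniform convergence of $\partial_\beta^l Q_{nk}^e$ to $\partial_\beta^l Q^e$ (Proposition~\ref{prop:derivcvg}). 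Proposition~\ref{prop:detcvg} is then applied \emph{directly} to $Q_{nk}^e$, yielding $\abv{\beta}_{nk}^{e}(\beta^{(0)},r)\in R_{nk}^{e}\cap\Beta_{0}$ (resp.\ $S_{nk}^e\cap\Beta_0$) for every $\beta^{(0)}\in\Beta_0$ simultaneously, on a single high-probability event. Since $R^e\cap\Beta_0=\{\beta_0\}$ (resp.\ $S^e\cap\Beta_0=\{\beta_0\}$) under Assumption~\ref{ass:routine}, Theorems~\ref{thm:rootdist}--\ref{thm:root2nd} then give the $o_p(n^{-1/2})$ conclusion.

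Your tracking argument has a genuine gap that you correctly flag but do not close. The theorems you invoke (Nocedal--Wright Theorems~6.5 and~10.1; Mor\'e--Sorensen Theorem~4.13) give only \emph{pointwise} convergence $\beta_\infty^{(s)}\to\beta_0$ for each fixed starting point; your ``Heine--Borel covering argument'' does not extract a uniform $S(\epsilon)$ from this, since no equicontinuity of the iterate map in $\beta^{(0)}$ is available (and with line searches constrained only by the Wolfe conditions, the map $\beta^{(0)}\mapsto\beta_\infty^{(s)}$ is not even single-valued). Your descent-based remedy in the final paragraph also does not deliver a bound on the number of steps that is uniform in $\beta^{(0)}$. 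The paper's approach sidesteps all of this entirely: no comparison of sample and population iterates is ever made, because once the sample criterion is shown to inherit the required structural properties w.p.a.1., the deterministic convergence theorems are applied to the sample problem path by path.
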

\begin{rem}
\label{rem:cvgconditions} Convergence of the Gauss-Newton method
requires the weakest conditions on $\lambda_{n}$ of all three algorithms.
This is because the Hessian approximation $\Delta_{n,(s)}\defeq G_{n}(\beta^{(s)})^{\trans}W_{n}G_{n}(\beta^{(s)})$
used by Gauss-Newton is valid for criteria having the same minimum-distance
structure as $Q_{n}^{\Wald}$; here $G_{n}(\beta)\defeq\partial_{\beta}\abv{\theta}_{n}^{k}(\beta,\lambda_{n})$.
Thus the uniform convergence of $G_{n}$ is sufficient to ensure that
$\Delta_{n,(s)}$ behaves suitably in large samples, whence only \enuref{H:deriv}
with $l=1$ is required.
\end{rem}

\section{Monte Carlo results\label{sec:monte}}

This section conducts a set of Monte Carlo experiments to assess the
performance of the GII estimator, in terms of bias, efficiency, and
computation time. The parameters of Models~1--4 (see \secref{model})
are estimated a large number of times using ``observed'' data generated
by the respective models. For each model, the Monte Carlo experiments
are conducted for several sets of parameter configurations. For Models~\ref{mod:serialprobit},
\ref{mod:dynprobit}, and \ref{mod:trichotomous}, the parameters
are estimated in each Monte Carlo replication using both GII and simulated
maximum likelihood (SML) in conjunction with the GHK smooth probability
simulator (cf.\ \citealp{Lee97JoE}). \modref{initialprob}, which
cannot easily be estimated via SML, is estimated using only GII. We
omit Model~5, as \citet{ASV13Ecta} already present results showing
that GII performs well for Heckman selection-type models.

In all cases, we use the LR approach to (generalized) indirect inference
to construct our estimates. We do this for two reasons. First, unlike
the Wald and LM approaches, the LR approach does not require the estimation
of a weight matrix. In this respect, the LR approach is easier to
implement than the other two approaches. Furthermore, because estimates
of optimal weight matrices often do not perform well in finite samples
(see e.g.\ \citealp{AS96JBES}), the LR approach is likely to perform
better in small samples. Second, because the LR approach is asymptotically
equivalent to the other two approaches when the auxiliary model is
correctly specified, the relative inefficiency of the LR estimator
is likely to be small when the auxiliary model is chosen judiciously.

To optimize the criterion functions, we use a version of the Davidon-Fletcher-Powell
algorithm (as implemented in Chapter~10 of \citealp{Press93book}),
which is closely related to the quasi-Newton routine analyzed in \subref{convergence}.
The initial parameter vector in the hillclimbing algorithm is the
true parameter vector. Most of the computation time in generalized
indirect inference lies in computing ordinary least squares (OLS)
estimates. The main cost in computing OLS estimates lies, in turn,
in computing the $X^{\trans}X$ part of $(X^{\trans}X)^{-1}X^{\trans}Y$.
We use blocking and loop unrolling techniques to speed up the computation
of $X^{\trans}X$ by a factor of 2 to 3 relative to a ``naive''
algorithm.\footnote{To avoid redundant calculations, we also precompute and store for
later use those elements of $X^{\trans}X$ that depend only on the
exogenous variables. We are grateful to James MacKinnon for providing
code that implements the blocking and loop unrolling techniques.}

\subsection{Results for \modref{serialprobit}}

\modref{serialprobit} is a two-alternative panel probit model with
serially correlated errors and one exogenous regressor. It has two
unknown parameters: the regressor coefficient $b$, and the serial
correlation parameter $r$. We set $b=1$ and consider $r\in\{0,0.40,0.85\}$.
In the Monte Carlo experiments, $n=1000$ and $T=5$. As in all of
the simulation exercises carried out in this paper, we compute the
GII estimator via the two-step approach described in \subref{onestep},
using $(\lambda,M)=(0.03,10)$ in the first step, and $(\lambda,M)=(0.003,300)$
in the second. The exogenous variables (the $x_{it}$'s) are i.i.d.\ draws
from a $N[0,1]$ distribution, drawn anew for each Monte Carlo replication.

The auxiliary model consists of $T$ linear probability models of
the form
\[
y_{it}=z_{it}^{\trans}\a_{t}+\xi_{it}
\]
where $\xi_{it}\distiid N[0,\sigma_{t}^{2}]$, $z_{it}$ denotes the
vector of regressors for individual $i$ in time period $t$, and
$\a_{t}$ and $\sigma_{t}^{2}$ are parameters to be estimated. We
include in $z_{it}$ both lagged choices and polynomial functions
of current and lagged exogenous variables; the included variables
change over time, so as to allow the auxiliary model to incorporate
the additional lagged information that is available in later time
periods. (When estimating the model on simulated data, the simulated
lagged choices are of course replaced by their smoothed counterparts,
as per the discussion in \subref{dynamic} above.) The auxiliary model
is thus characterized by the parameters $\t=\{\a_{t},\sigma_{t}^{2}\}_{t=1}^{T}$;
these are estimated by maximum likelihood (which corresponds to OLS
here, under the distributional assumptions on $\xi_{it}$).

It is worth emphasizing that we include lagged choices (and lagged
$x$'s) in the auxiliary model despite the fact that the structural
model does not exhibit true state dependence. But in Model 1 it is
well-know that lagged choices are predictive of current choices (termed
``spurious state dependence'' by Heckman). This is a good illustration
of how a good auxiliary model should be designed to capture the correlation
patterns in the data, as opposed to the true structure.

To examine how increasing the ``richness'' of the auxiliary model
affects the efficiency of the structural parameter estimates, we conduct
Monte Carlo experiments using four nested auxiliary models. In all
four, we impose the restrictions $\a_{t}=\a_{q}$ and $\sigma_{t}^{2}=\sigma_{q}^{2}$,
$t=q+1,\ldots,T$, for some $q<T$. This is because the time variation
in the estimated coefficients of the linear probability models comes
mostly from the non-stationarity of the errors in the structural model,
and so it is negligible after the first few time periods (we do not
assume that the initial error is drawn from the stationary distribution
implied by the law of motion for the errors).

In auxiliary model \#1, $q=1$ and the regressors in the linear probability
model are given by: $z_{it}=(1,x_{it},y_{i,t-1})$, $t=1,\ldots,T$,
where the unobserved $y_{i0}$ is set equal to 0. We use this very
simple auxiliary model to illustrate how GII can produce very inefficient
estimates if one uses a poor auxiliary model. In auxiliary model \#2,
$q=2$ and the regressors are $z_{i1}=(1,x_{i1})$, and 
\begin{align*}
z_{it} & =(1,x_{it},y_{i,t-1},x_{i,t-1}),\ t\in\{2,\ldots,T\},
\end{align*}
giving a total of 18 parameters. Auxiliary model \#3 has $q=4$, regressors
\begin{align*}
z_{i1} & =(1,x_{i1},x_{i1}^{3}) & z_{i3} & =(1,x_{i3},y_{i2},x_{i2},y_{i1},x_{i1})\\
z_{i2} & =(1,x_{i2},y_{i1},x_{i1}) & z_{it} & =(1,x_{it},y_{i,t-1},x_{i,t-1},y_{i,t-2},x_{i,t-2},y_{i,t-3}),\ t\in\{4,\ldots,T\},
\end{align*}
and 24 parameters. Finally, auxiliary model \#4 has the same regressors
as \#3, except that
\begin{align*}
z_{i4} & =(1,x_{i4},y_{i3},x_{i3},y_{i2},x_{i2},y_{i1},x_{i1})\\
z_{it} & =(1,x_{it},y_{i,t-1},x_{i,t-1},y_{i,t-2},x_{i,t-2},y_{i,t-3},x_{i,t-3},y_{i,t-4}),\ t\in\{5,\ldots,T\}
\end{align*}
so $q=5$ and there are 35 parameters.

Table 1 presents the results of six sets of Monte Carlo experiments,
each with 2000 replications. The first two sets of experiments report
the results for simulated maximum likelihood, based on GHK, using
25 draws (SML \#1) and 50 draws (SML \#2). The remaining four sets
of experiments report the results for generalized indirect inference,
where GII \#$i$ refers to generalized indirect inference using auxiliary
model \#$i$. In each case, we report the average and the standard
deviation of the parameter estimates. We also report the efficiency
loss of GII \#$i$ relative to SML \#2 in the columns labelled $\sigma_{\mathrm{GII}}/\sigma_{\mathrm{SML}}$,
where we divide the standard deviations of the GII estimates by the
standard deviations of the estimates for SML \#2. Finally, we report
the average time (in seconds) required to compute estimates (we use
the Intel Fortran Compiler Version 7.1 on a 2.2GHz Intel Xeon processor
running Red Hat Linux).

Table 1 contains several key findings:

First, both SML and GII generate estimates with very little bias.

Second, GII is less efficient than SML, but the efficiency losses
are small provided that the auxiliary model is sufficiently rich.
For example, auxiliary model \#1 leads to large efficiency losses,
particularly for the case of high serial correlation in the errors
($r=0.85$). For models with little serial correlation ($r=0$), however,
auxiliary model \#2 is sufficiently rich to to make GII almost as
efficient as SML. When there is more serial correlation in the errors,
auxiliary model \#2 leads to reasonably large efficiency losses (as
high as 30\% when $r=0.85$), but auxiliary model \#3, which contains
more lagged information in the linear probability models than does
auxiliary model \#2, reduces the worst efficiency loss to 13\%. Auxiliary
model \#4 provides almost no efficiency gains relative to auxiliary
model \#3.

Third, GII is faster than SML: computing a set of estimates using
GII with auxiliary model \#3 takes about 30\% less time than computing
a set of estimates using SML with 50 draws.

For generalized indirect inference, we also compute (but do not report
in Table 1) estimated asymptotic standard errors, using the estimators
described in \thmref{feasible}. In all cases, the averages of the
estimated standard errors across the Monte Carlo replications are
very close to (within a few percent of) the actual standard deviations
of the estimates, suggesting that the asymptotic results provide a
good approximation to the behavior of the estimates in samples of
the size that we use.

\subsection{Results for \modref{dynprobit}\label{sub:dynprobit}}

\modref{dynprobit} is a panel probit model with serially correlated
errors, a single exogenous regressor, and a lagged dependent variable.
It has three unknown parameters: $b_{1}$, the coefficient on the
exogenous regressor, $b_{2}$, the coefficient on the lagged dependent
variable, and $r$, the serial correlation parameter. We set $b_{1}=1$,
$b_{2}=0.2$, and consider $r\in\{0,0.4,0.85\}$; $n=1000$ and $T=10$.

Table 2 presents the results of six sets of Monte Carlo experiments,
each with 1000 replications; the labels SML \#$i$ and GII \#$i$
are to be interpreted exactly as for Table 1. The results are similar
to those for Model 1. Both SML and GII generate estimates with very
little bias. SML is more efficient than GII, but the efficiency loss
is small when the auxiliary model is sufficiently rich (i.e., 17\%
at most for model \#3, 15\% at most for model \#4). However, auxiliary
model \#1 can lead to very large efficiency losses, as can auxiliary
model \#2 if there is strong serial correlation. 

Again, average asymptotic standard errors are close to the standard
deviations obtained across the simulations (not reported). Finally,
GII using auxiliary model \#3 is about 25\% faster than SML using
50 draws.

\subsection{Results for \modref{initialprob}\label{sub:initialprob}}

\modref{initialprob} is identical to \modref{dynprobit}, except
there is an ``initial conditions'' problem: the econometrician does
not observe individuals' choices in the first $s$ periods. This
is an excellent example of the type of problem that motivates this
paper: SML is extremely difficult to implement, due to the problem
of integrating over the initial conditions. But II is appealing, as
it is still trivial to simulate data from the model. However, we need
GII to deal with the discrete outcomes. 

To proceed, our Monte Carlo experiments are parametrized exactly as
for \modref{dynprobit}, except that we set $T=15$, with choices
in the first $s=5$ time periods being unobserved (but note that exogenous
variables \emph{are} observed in these time periods).

Auxiliary model \#1 is as for Models~\ref{mod:serialprobit} and
\ref{mod:dynprobit}: $q=1$ and the regressors are $z_{it}=(1,x_{it},y_{i,t-1})$,
$t=s+1,\ldots,T$, where the unobserved $y_{is}$ is set equal to
0. In auxiliary model \#2, $q=2$ and the regressors are: 
\begin{align*}
z_{i,s+1} & =(1,x_{i,s+1},x_{is}) & z_{it} & =(1,x_{it},y_{i,t-1},x_{i,t-1}),\ t\in\{s+2,\ldots,T\},
\end{align*}
for a total of 19 parameters. In auxiliary model \#3, $q=4$ and there
are 27 parameters: 
\begin{align*}
z_{i,s+1} & =(1,x_{i,s+1},x_{i,s+1}^{3},x_{is},x_{i,s-1})\\
z_{i,s+2} & =(1,x_{i,s+2},y_{i,s+1},x_{i,s+1},x_{is})\\
z_{i,s+3} & =(1,x_{i,s+3},y_{i,s+2},x_{i,s+2},y_{i,s+1},x_{i,s+1})\\
z_{it} & =(1,x_{it},y_{i,t-1},x_{i,t-1},y_{i,t-2},x_{i,t-2},y_{i,t-3}),\ t\in\{s+4,\ldots,T\}
\end{align*}
Finally, in auxiliary model \#4, $q=5$ and there are 41 parameters:
relative to \#3, $z_{i,s+1}$, $z_{i,s+2}$ and $z_{i,s+3}$ are augmented
by an additional lag of $x_{is}$, and 
\begin{align*}
z_{i,s+4} & =(1,x_{i,s+4},y_{i,s+3},x_{i,s+3},y_{i,s+2},x_{i,s+2},y_{i,s+1},x_{i,s+1})\\
z_{it} & =(1,x_{it},y_{i,t-1},x_{i,t-1},y_{i,t-2},x_{i,t-2},y_{i,t-3},x_{i,t-3},y_{i,t-4}),\ t\in\{s+5,\ldots,T\}.
\end{align*}

Table 3 presents the results of four sets of Monte Carlo experiments,
each with 1000 replications. There are two key findings: First, as
with Models~\ref{mod:serialprobit} and \ref{mod:dynprobit}, GII
generates estimates with very little bias. Second, increasing the
``richness'' of the auxiliary model leads to large efficiency gains
relative to auxiliary model \#1, particularly when the errors are
persistent. However, auxiliary model \#4 provides few efficiency gains
relative to auxiliary model \#3.

\subsection{Results for \modref{trichotomous}\label{sub:trichot}}

\modref{trichotomous} is a (static) three-alternative probit model
with eight unknown parameters: three coefficients in each of the two
equations for the latent utilities ($\{b_{1i}\}_{i=0}^{2}\}$ and
$\{b_{2i}\}_{i=0}^{2}$) and two parameters governing the covariance
matrix of the disturbances in these equations ($c_{1}$ and $c_{2}$).
We set $b_{10}=b_{20}=0$, $b_{11}=b_{12}=b_{21}=b_{22}=1$, $c_{2}=1$,
and consider $c_{1}\in\{0,1.33\}$ (implying that the disturbances
in the latent utilities are respectively independent, or have a correlation
of $0.8$). We set $n=2000$.

The auxiliary model is a pair of linear probability models, one for
each of the first two alternatives: 
\begin{align*}
y_{i1} & =z_{i}^{\trans}\alpha_{1}+\xi_{i1}\\
y_{i2} & =z_{i}^{\trans}\alpha_{2}+\xi_{i2},
\end{align*}
where $z_{i}$ consists of polynomial functions of the exogenous variables
$\{x_{ij}\}_{j=1}^{3}$, and $\xi_{i}\distiid N[0,\Sigma_{\xi}]$.
The auxiliary model parameters $\t=(\alpha_{1},\alpha_{2},\Sigma_{\xi})$
are estimated by OLS; this corresponds to maximum likelihood -- even
though $\Sigma_{\xi}$ is not diagonal -- because the same regressors
appear in both equations.

We conduct Monte Carlo experiments using four nested versions of the
auxiliary model. In auxiliary model \#1, $z_{i}=(1,x_{i1},x_{i2},x_{i3})$,
giving a total of 11 parameters. Auxiliary model \#2 adds all the
second-order products of these variables, as well as one third-order
product to $z_{i}$, i.e. 
\[
z_{i}=(1,x_{i1},x_{i2},x_{i3},x_{i1}^{2},x_{i2}^{2},x_{i3}^{2},x_{i1}x_{i2},x_{i1}x_{i3},x_{i2}x_{i3},x_{i1}x_{i2}x_{i3}),
\]
for a total of 25 parameters. In auxiliary model \#3, $z_{i}$ contains
all third-order products (for a total of 43 parameters) and in auxiliary
model \#4, $z_{i}$ contains all fourth-order products (for a total
of 67 parameters).

Tables 4 and 5 present the results of six sets of Monte Carlo experiments,
each with 1000 replications; the labels SML \#$i$ and GII \#$i$
are to be interpreted exactly as for Table 1. The key findings are
qualitatively similar to those for Models 1, 2, and 3. First, both
SML and GII generate estimates with very little bias. Second, auxiliary
model \#1, which contains only linear terms, leads to large efficiency
losses relative to SML (as large as 50\%). But auxiliary model \#2,
which contains terms up to second order, reduces the efficiency losses
substantially (to no more than 15\% when the errors are uncorrelated,
and to no more than 26\% when $c=1.33$). Auxiliary model \#3, which
contains terms up to third order, provides additional small efficiency
gains (the largest efficiency loss is reduced to 20\%), while auxiliary
model \#4, which contains fourth-order terms, provides few, if any,
efficiency gains relative to auxiliary model \#3. Finally, computing
estimates using GII with auxiliary model \#3 takes about 30\% less
time than computing estimates using SML with 50 draws.

\section{Conclusion}

Discrete choice models play an important role in many fields of economics,
from labor economics to industrial organization to macroeconomics.
Unfortunately, these models are usually quite challenging to estimate
(except in special cases like MNL where choice probabilities have
closed forms). Simulation-based methods like SML and MSM have been
developed that can be used for more complex models like MNP. But in
many important cases (models with initial conditions problems and
Heckman selection models being leading cases) even these methods are
very difficult to implement.

In this paper we develop and implement a new simulation-based method
for estimating models with discrete or mixed discrete/continuous outcomes.
The method is based on indirect inference. But the traditional II
approach is not easily applicable to discrete choice models because
one must deal with a non-smooth objective surface. The key innovation
here is that we develop a generalized method of indirect inference
(GII), in which the auxiliary models that are estimated on the actual
and simulated data may differ (provided that the estimates from both
models share a common probability limit). This allows us to chose
an auxiliary model for the simulated data such that we obtain an objective
function that is a smooth function of the structural parameters. This
smoothness renders GII practical as a method for estimating discrete
choice models.

Our theoretical analysis goes well beyond merely deriving the limiting
distribution of the minimizer of the GII criterion function. Rather,
in keeping with computational motivation of this paper, we show that
the proposed smoothing facilitates the convergence of derivative-based
optimizers, in the sense that the smoothing leads to a sample optimization
problem that is no more difficult than the corresponding population
problem, where the latter involves the minimization of a necessarily
smooth criterion. This provides a rigorous justification for using
standard derivative-based optimizers to compute the GII estimator,
which is also shown to inherit the limiting distribution of the (unsmoothed)
II estimator. Inferences based on the GII estimates may thus be drawn
in the standard manner, via the usual Wald statistics. Our results
on the convergence of derivative-based optimizers seem to be new to
the literature.

We also provide a set of Monte Carlo experiments to illustrate the
practical usefulness of GII. In addition to being robust and fast,
GII yields estimates with good properties in small samples. In particular,
the estimates display very little bias and are nearly as efficient
as maximum likelihood (in those cases where simulated versions of
maximum likelihood can be used) provided that the auxiliary model
is chosen judiciously.

GII could potentially be applied to a wide range of discrete and discrete/continuous
outcome models beyond those we consider in our Monte Carlo experiments.
Indeed, GII is sufficiently flexible to accommodate almost any conceivable
model of discrete choice, including, discrete choice dynamic programming
models, discrete dynamic games, etc. We hope that applied economists
from a variety of fields find GII a useful and easy-to-implement method
for estimating discrete choice models.

{\singlespacing\nocite{EM94Hdbk}

\section{References}

\bibliographystyle{econometrica}
\bibliography{asymptotics}

}

\noindent \pagebreak{}

\begin{center}
\begin{tabular}{l|cc|cc|cc|c|}
\multicolumn{8}{c}{\textbf{Table 1}}\tabularnewline
\multicolumn{8}{c}{Monte Carlo Results for Model 1}\tabularnewline
\multicolumn{8}{c}{}\tabularnewline
 & \multicolumn{2}{c|}{Mean} & \multicolumn{2}{c|}{Std.\ dev.} & \multicolumn{2}{c}{$\sigma_{\mathrm{GII}}/\sigma_{\mathrm{SML}}$} & Time \tabularnewline
 & $b$  & $r$  & $b$  & $r$  & $b$  & $r$  & (sec.) \tabularnewline
\hline 
\hline 
\multicolumn{8}{c}{}\tabularnewline
\multicolumn{8}{c}{$b=1$, $r=0$}\tabularnewline
\hline 
SML \#1  & 1.000  & $-0.002$  & 0.0387  & 0.0454  & ---  & ---  & 0.76 \tabularnewline
SML \#2  & 1.001  & $-0.000$  & 0.0373  & 0.0468  & ---  & ---  & 1.53 \tabularnewline
GII \#1  & 0.998  & $\phantom{-}0.002$  & 0.0390  & 0.0645  & 1.05  & 1.37  & 0.67 \tabularnewline
GII \#2  & 0.993  & $\phantom{-}0.001$  & 0.0386  & 0.0490  & 1.03  & 1.05  & 0.72 \tabularnewline
GII \#3  & 0.992  & $\phantom{-}0.001$  & 0.0393  & 0.0490  & 1.05  & 1.05  & 0.91 \tabularnewline
GII \#4  & 0.988  & $\phantom{-}0.001$  & 0.0390  & 0.0485  & 1.05  & 1.04  & 0.99 \tabularnewline
\hline 
\multicolumn{8}{c}{}\tabularnewline
\multicolumn{8}{c}{$b=1$, $r=0.4$}\tabularnewline
\hline 
SML \#1  & 0.995  & 0.385  & 0.0400  & 0.0413  & ---  & ---  & 0.78 \tabularnewline
SML \#2  & 0.999  & 0.392  & 0.0390  & 0.0410  & ---  & ---  & 1.54 \tabularnewline
GII \#1  & 0.998  & 0.399  & 0.0454  & 0.0616  & 1.16  & 1.50  & 0.70 \tabularnewline
GII \#2  & 0.993  & 0.396  & 0.0410  & 0.0456  & 1.05  & 1.11  & 0.72 \tabularnewline
GII \#3  & 0.991  & 0.395  & 0.0417  & 0.0432  & 1.07  & 1.05  & 0.91 \tabularnewline
GII \#4  & 0.987  & 0.392  & 0.0416  & 0.0432  & 1.07  & 1.05  & 0.97 \tabularnewline
\hline 
\multicolumn{8}{c}{}\tabularnewline
\multicolumn{8}{c}{$b=1$, $r=0.85$}\tabularnewline
\hline 
SML \#1  & 0.984  & 0.833  & 0.0452  & 0.0333  & ---  & ---  & 0.74 \tabularnewline
SML \#2  & 0.993  & 0.842  & 0.0432  & 0.0316  & ---  & ---  & 1.47 \tabularnewline
GII \#1  & 0.994  & 0.846  & 0.0791  & 0.0672  & 1.83  & 2.13  & 0.71 \tabularnewline
GII \#2  & 0.991  & 0.845  & 0.0511  & 0.0412  & 1.18  & 1.30  & 0.74 \tabularnewline
GII \#3  & 0.992  & 0.846  & 0.0492  & 0.0357  & 1.14  & 1.13  & 0.93 \tabularnewline
GII \#4  & 0.988  & 0.841  & 0.0490  & 0.0357  & 1.13  & 1.13  & 1.00 \tabularnewline
\hline 
\end{tabular}
\par\end{center}

\pagebreak{}

\begin{center}
\begin{tabular}{l|ccc|ccc|ccc|c|}
\multicolumn{11}{c}{\textbf{Table 2}}\tabularnewline
\multicolumn{11}{c}{Monte Carlo Results for Model 2}\tabularnewline
\multicolumn{11}{c}{}\tabularnewline
 & \multicolumn{3}{c|}{Mean} & \multicolumn{3}{c|}{Std.\ dev.} & \multicolumn{3}{c}{$\sigma_{\mathrm{GII}}/\sigma_{\mathrm{SML}}$} & Time \tabularnewline
 & $b_{1}$  & $r$  & $b_{2}$  & $b_{1}$  & $r$  & $b_{2}$  & $b_{1}$  & $r$  & $b_{2}$  & (sec.) \tabularnewline
\hline 
\hline 
\multicolumn{11}{c}{}\tabularnewline
\multicolumn{11}{c}{$b_{1}=1$, $r=0$, $b_{2}=0.2$}\tabularnewline
\hline 
SML \#1  & 1.000  & 0.001  & 0.200  & 0.0274  & 0.0357  & 0.0355  & ---  & ---  & ---  & 2.47 \tabularnewline
SML \#2  & 1.002  & 0.002  & 0.199  & 0.0273  & 0.0362  & 0.0365  & ---  & ---  & ---  & 4.89 \tabularnewline
GII \#1  & 0.999  & 0.001  & 0.199  & 0.0267  & 0.0571  & 0.0437  & 0.98  & 1.58  & 1.20  & 2.72 \tabularnewline
GII \#2  & 0.996  & 0.000  & 0.199  & 0.0267  & 0.0379  & 0.0379  & 0.98  & 1.05  & 1.04  & 2.80 \tabularnewline
GII \#3  & 0.995  & 0.001  & 0.199  & 0.0269  & 0.0377  & 0.0376  & 0.99  & 1.04  & 1.03  & 3.66 \tabularnewline
GII \#4  & 0.993  & 0.000  & 0.198  & 0.0270  & 0.0377  & 0.0375  & 0.99  & 1.04  & 1.03  & 4.06 \tabularnewline
\hline 
\multicolumn{11}{c}{}\tabularnewline
\multicolumn{11}{c}{$b_{1}=1$, $r=0.4$, $b_{2}=0.2$}\tabularnewline
\hline 
SML \#1  & 0.994  & 0.379  & 0.214  & 0.0278  & 0.0314  & 0.0397  & ---  & ---  & ---  & 2.42 \tabularnewline
SML \#2  & 0.999  & 0.389  & 0.206  & 0.0287  & 0.0316  & 0.0397  & ---  & ---  & ---  & 4.82 \tabularnewline
GII \#1  & 0.997  & 0.397  & 0.198  & 0.0339  & 0.0587  & 0.0544  & 1.18  & 1.86  & 1.37  & 2.73 \tabularnewline
GII \#2  & 0.994  & 0.396  & 0.198  & 0.0293  & 0.0386  & 0.0462  & 1.02  & 1.22  & 1.16  & 2.82 \tabularnewline
GII \#3  & 0.993  & 0.396  & 0.197  & 0.0289  & 0.0343  & 0.0431  & 1.01  & 1.09  & 1.09  & 3.64 \tabularnewline
GII \#4  & 0.991  & 0.395  & 0.196  & 0.0289  & 0.0348  & 0.0434  & 1.01  & 1.10  & 1.09  & 4.02 \tabularnewline
\hline 
\multicolumn{11}{c}{}\tabularnewline
\multicolumn{11}{c}{$b_{1}=1$, $r=0.85$, $b_{2}=0.2$}\tabularnewline
\hline 
SML \#1  & 0.974  & 0.831  & 0.220  & 0.0321  & 0.0174  & 0.0505  & ---  & ---  & ---  & 2.78 \tabularnewline
SML \#2  & 0.987  & 0.840  & 0.208  & 0.0327  & 0.0159  & 0.0507  & ---  & ---  & ---  & 5.47 \tabularnewline
GII \#1  & 1.000  & 0.854  & 0.183  & 0.0952  & 0.0633  & 0.1185  & 2.91  & 3.98  & 2.34  & 3.01 \tabularnewline
GII \#2  & 0.992  & 0.852  & 0.190  & 0.0417  & 0.0266  & 0.0721  & 1.28  & 1.67  & 1.42  & 2.92 \tabularnewline
GII \#3  & 0.992  & 0.851  & 0.191  & 0.0383  & 0.0179  & 0.0547  & 1.17  & 1.13  & 1.08  & 3.68 \tabularnewline
GII \#4  & 0.990  & 0.850  & 0.188  & 0.0379  & 0.0175  & 0.0548  & 1.15  & 1.10  & 1.09  & 4.06 \tabularnewline
\hline 
\end{tabular}
\par\end{center}

\pagebreak{}

\begin{center}
\begin{tabular}{l|ccc|ccc|c|}
\multicolumn{8}{c}{\textbf{Table 3}}\tabularnewline
\multicolumn{8}{c}{Monte Carlo Results for Model 3}\tabularnewline
\multicolumn{8}{c}{}\tabularnewline
 & \multicolumn{3}{c|}{Mean} & \multicolumn{3}{c}{Std.\ dev.} & Time \tabularnewline
 & $b_{1}$  & $r$  & $b_{2}$  & $b_{1}$  & $r$  & $b_{2}$  & (sec.) \tabularnewline
\hline 
\hline 
\multicolumn{8}{c}{}\tabularnewline
\multicolumn{8}{c}{$b_{1}=1$, $r=0$, $b_{2}=0.2$}\tabularnewline
\hline 
GII \#1  & 0.997  & $-0.000$  & 0.200  & 0.0272  & 0.0532  & 0.0387  & 3.91 \tabularnewline
GII \#2  & 0.994  & $-0.001$  & 0.200  & 0.0271  & 0.0387  & 0.0347  & 4.01 \tabularnewline
GII \#3  & 0.993  & $-0.001$  & 0.199  & 0.0272  & 0.0385  & 0.0345  & 4.81 \tabularnewline
GII \#4  & 0.991  & $-0.001$  & 0.199  & 0.0275  & 0.0389  & 0.0347  & 5.38 \tabularnewline
\hline 
\multicolumn{8}{c}{}\tabularnewline
\multicolumn{8}{c}{$b_{1}=1$, $r=0.4$, $b_{2}=0.2$}\tabularnewline
\hline 
GII \#1  & 0.994  & 0.397  & 0.198  & 0.0361  & 0.0518  & 0.0493  & 3.99 \tabularnewline
GII \#2  & 0.991  & 0.397  & 0.197  & 0.0309  & 0.0363  & 0.0430  & 4.00 \tabularnewline
GII \#3  & 0.990  & 0.396  & 0.196  & 0.0306  & 0.0317  & 0.0399  & 4.80 \tabularnewline
GII \#4  & 0.987  & 0.395  & 0.196  & 0.0302  & 0.0318  & 0.0400  & 5.35 \tabularnewline
\hline 
\multicolumn{8}{c}{}\tabularnewline
\multicolumn{8}{c}{$b_{1}=1$, $r=0.85$, $b_{2}=0.2$}\tabularnewline
\hline 
GII \#1  & 0.993  & 0.851  & 0.184  & 0.0936  & 0.0403  & 0.1289  & 4.41 \tabularnewline
GII \#2  & 0.986  & 0.851  & 0.191  & 0.0546  & 0.0249  & 0.0905  & 4.37 \tabularnewline
GII \#3  & 0.987  & 0.850  & 0.189  & 0.0430  & 0.0140  & 0.0598  & 4.93 \tabularnewline
GII \#4  & 0.984  & 0.849  & 0.185  & 0.0411  & 0.0136  & 0.0597  & 5.56 \tabularnewline
\hline 
\end{tabular}
\par\end{center}

\pagebreak{}

\begin{center}
\begin{tabular}{l|cc|cccc|cccc|}
\multicolumn{11}{c}{\textbf{Table 4}}\tabularnewline
\multicolumn{11}{c}{Monte Carlo Results for Model 4}\tabularnewline
\multicolumn{11}{c}{($b_{10}=0$, $b_{11}=1$, $b_{12}=1$, $b_{20}=0$, $b_{21}=1$,
$b_{22}=1$, $c_{1}=0$, $c_{2}=1$)}\tabularnewline
\multicolumn{11}{c}{}\tabularnewline
 & \multicolumn{2}{c|}{SML} & \multicolumn{4}{c|}{GII} & \multicolumn{4}{c|}{$\sigma_{\mathrm{GII}}/\sigma_{\mathrm{SML}}$}\tabularnewline
 & \#1  & \#2  & \#1  & \#2  & \#3  & \#4  & \#1  & \#2  & \#3  & \#4 \tabularnewline
\hline 
\hline 
\multicolumn{11}{c}{}\tabularnewline
\multicolumn{11}{c}{Mean}\tabularnewline
\hline 
$b_{10}$  & $\phantom{-}0.007$  & $\phantom{-}0.005$  & $\phantom{-}0.003$  & $\phantom{-}0.002$  & $\phantom{-}0.002$  & $\phantom{-}0.002$  & ---  & ---  & ---  & --- \tabularnewline
$b_{11}$  & $\phantom{-}1.000$  & $\phantom{-}1.001$  & $\phantom{-}0.995$  & $\phantom{-}0.994$  & $\phantom{-}0.992$  & $\phantom{-}0.990$  & ---  & ---  & ---  & --- \tabularnewline
$b_{12}$  & $\phantom{-}1.000$  & $\phantom{-}1.003$  & $\phantom{-}0.998$  & $\phantom{-}0.997$  & $\phantom{-}0.995$  & $\phantom{-}0.992$  & ---  & ---  & ---  & --- \tabularnewline
$b_{20}$  & $-0.001$  & $-0.003$  & $-0.006$  & $-0.004$  & $-0.004$  & $\phantom{-}0.004$  & ---  & ---  & ---  & --- \tabularnewline
$b_{21}$  & $\phantom{-}1.006$  & $\phantom{-}1.007$  & $\phantom{-}1.001$  & $\phantom{-}0.999$  & $\phantom{-}0.997$  & $\phantom{-}0.996$  & ---  & ---  & ---  & --- \tabularnewline
$b_{22}$  & $\phantom{-}1.005$  & $\phantom{-}1.007$  & $\phantom{-}1.004$  & $\phantom{-}1.000$  & $\phantom{-}0.998$  & $\phantom{-}0.996$  & ---  & ---  & ---  & --- \tabularnewline
$c_{1}$  & $\phantom{-}0.020$  & $\phantom{-}0.010$  & $\phantom{-}0.007$  & $\phantom{-}0.005$  & $\phantom{-}0.005$  & $\phantom{-}0.006$  & ---  & ---  & ---  & --- \tabularnewline
$c_{2}$  & $\phantom{-}1.004$  & $\phantom{-}1.003$  & $\phantom{-}1.006$  & $\phantom{-}1.001$  & $\phantom{-}1.001$  & $\phantom{-}1.002$  & ---  & ---  & ---  & --- \tabularnewline
\hline 
\multicolumn{11}{c}{}\tabularnewline
\multicolumn{11}{c}{Std.\ dev.}\tabularnewline
\hline 
$b_{10}$  & 0.0630  & 0.0628  & 0.0720  & 0.0666  & 0.0656  & 0.0665  & 1.15  & 1.06  & 1.04  & 1.06 \tabularnewline
$b_{11}$  & 0.0686  & 0.0686  & 0.0872  & 0.0764  & 0.0741  & 0.0743  & 1.27  & 1.11  & 1.08  & 1.08 \tabularnewline
$b_{12}$  & 0.0572  & 0.0574  & 0.0719  & 0.0667  & 0.0632  & 0.0646  & 1.25  & 1.16  & 1.10  & 1.13 \tabularnewline
$b_{20}$  & 0.0663  & 0.0657  & 0.0745  & 0.0686  & 0.0677  & 0.0676  & 1.13  & 1.04  & 1.04  & 1.03 \tabularnewline
$b_{21}$  & 0.1065  & 0.1050  & 0.1395  & 0.1128  & 0.1095  & 0.1099  & 1.33  & 1.07  & 1.04  & 1.05 \tabularnewline
$b_{22}$  & 0.1190  & 0.1174  & 0.1593  & 0.1285  & 0.1249  & 0.1244  & 1.36  & 1.09  & 1.06  & 1.06 \tabularnewline
$c_{1}$  & 0.1091  & 0.1107  & 0.1303  & 0.1276  & 0.1224  & 0.1265  & 1.18  & 1.15  & 1.11  & 1.14 \tabularnewline
$c_{2}$  & 0.1352  & 0.1325  & 0.1991  & 0.1509  & 0.1439  & 0.1421  & 1.50  & 1.14  & 1.09  & 1.07 \tabularnewline
\hline 
\multicolumn{7}{c|}{} &  &  &  & \tabularnewline
\hline 
Time  & 11.5  & 23.1  & 7.1  & 10.4  & 16.4  & 34.1  & ---  & ---  & ---  & --- \tabularnewline
\hline 
\end{tabular}
\par\end{center}

\pagebreak{}

\begin{center}
\begin{tabular}{l|cc|cccc|cccc|}
\multicolumn{11}{c}{\textbf{Table 5}}\tabularnewline
\multicolumn{11}{c}{Monte Carlo Results for Model 4}\tabularnewline
\multicolumn{11}{c}{($b_{10}=0$, $b_{11}=1$, $b_{12}=1$, $b_{20}=0$, $b_{21}=1$,
$b_{22}=1$, $c_{1}=1.33$, $c_{2}=1$)}\tabularnewline
\multicolumn{11}{c}{}\tabularnewline
 & \multicolumn{2}{c|}{SML} & \multicolumn{4}{c|}{GII} & \multicolumn{4}{c|}{$\sigma_{\mathrm{GII}}/\sigma_{\mathrm{SML}}$}\tabularnewline
 & \#1  & \#2  & \#1  & \#2  & \#3  & \#4  & \#1  & \#2  & \#3  & \#4 \tabularnewline
\hline 
\hline 
\multicolumn{11}{c}{}\tabularnewline
\multicolumn{11}{c}{Mean}\tabularnewline
\hline 
$b_{10}$  & $-0.031$  & $-0.017$  & $\phantom{-}0.000$  & $-0.001$  & $-0.000$  & $-0.001$  & ---  & ---  & ---  & --- \tabularnewline
$b_{11}$  & $\phantom{-}0.998$  & $\phantom{-}1.000$  & $\phantom{-}0.993$  & $\phantom{-}0.993$  & $\phantom{-}0.991$  & $\phantom{-}0.989$  & ---  & ---  & ---  & --- \tabularnewline
$b_{12}$  & $\phantom{-}1.016$  & $\phantom{-}1.011$  & $\phantom{-}0.998$  & $\phantom{-}0.998$  & $\phantom{-}0.996$  & $\phantom{-}0.994$  & ---  & ---  & ---  & --- \tabularnewline
$b_{20}$  & $-0.011$  & $-0.010$  & $-0.011$  & $-0.007$  & $-0.007$  & $-0.006$  & ---  & ---  & ---  & --- \tabularnewline
$b_{21}$  & $\phantom{-}0.992$  & $\phantom{-}0.999$  & $\phantom{-}1.000$  & $\phantom{-}0.997$  & $\phantom{-}0.995$  & $\phantom{-}0.991$  & ---  & ---  & ---  & --- \tabularnewline
$b_{22}$  & $\phantom{-}1.004$  & $\phantom{-}1.008$  & $\phantom{-}1.006$  & $\phantom{-}1.001$  & $\phantom{-}0.999$  & $\phantom{-}0.995$  & ---  & ---  & ---  & --- \tabularnewline
$c_{1}$  & $\phantom{-}1.269$  & $\phantom{-}1.306$  & $\phantom{-}1.347$  & $\phantom{-}1.338$  & $\phantom{-}1.335$  & $\phantom{-}1.330$  & ---  & ---  & ---  & --- \tabularnewline
$c_{2}$  & $\phantom{-}1.025$  & $\phantom{-}1.011$  & $\phantom{-}0.993$  & $\phantom{-}0.993$  & $\phantom{-}0.995$  & $\phantom{-}0.997$  & ---  & ---  & ---  & --- \tabularnewline
\hline 
\multicolumn{11}{c}{}\tabularnewline
\multicolumn{11}{c}{Std.\ dev.}\tabularnewline
\hline 
$b_{10}$  & 0.0693  & 0.0698  & 0.0789  & 0.0776  & 0.0758  & 0.0757  & 1.13  & 1.11  & 1.09  & 1.08 \tabularnewline
$b_{11}$  & 0.0587  & 0.0588  & 0.0696  & 0.0658  & 0.0632  & 0.0636  & 1.18  & 1.12  & 1.07  & 1.08 \tabularnewline
$b_{12}$  & 0.0745  & 0.0737  & 0.0883  & 0.0801  & 0.0781  & 0.0782  & 1.20  & 1.09  & 1.06  & 1.06 \tabularnewline
$b_{20}$  & 0.0766  & 0.0764  & 0.0900  & 0.0801  & 0.0786  & 0.0780  & 1.18  & 1.05  & 1.03  & 1.02 \tabularnewline
$b_{21}$  & 0.0884  & 0.0886  & 0.1140  & 0.0969  & 0.0952  & 0.0943  & 1.29  & 1.09  & 1.07  & 1.06 \tabularnewline
$b_{22}$  & 0.1106  & 0.1103  & 0.1471  & 0.1204  & 0.1176  & 0.1153  & 1.34  & 1.09  & 1.07  & 1.05 \tabularnewline
$c_{1}$  & 0.1641  & 0.1707  & 0.2454  & 0.2152  & 0.2049  & 0.2041  & 1.44  & 1.26  & 1.20  & 1.20 \tabularnewline
$c_{2}$  & 0.1229  & 0.1206  & 0.1599  & 0.1387  & 0.1338  & 0.1311  & 1.33  & 1.15  & 1.11  & 1.09 \tabularnewline
\hline 
\multicolumn{7}{c|}{} &  &  &  & \tabularnewline
\hline 
Time  & 12.7  & 25.6  & 7.4  & 10.8  & 17.1  & 34.4  & ---  & ---  & ---  & --- \tabularnewline
\hline 
\end{tabular}
\par\end{center}

\cleartooddpage{}

\setcounter{page}{1}

\renewcommand\thepage{A\arabic{page}} 

\appendix

\section{Details of optimization routines\label{app:routines}}

Both line-search methods (Gauss-Newton and quasi-Newton) involve the
use of a positive definite Hessian $\Delta_{(s)}$ in the approximating
model \eqref{fapprox}, and so the problem solved at step $s+1$ reduces
to that of ``approximately'' solving 
\begin{equation}
\min_{\alpha\in\reals}Q(\beta^{(s)}+\alpha p_{(s)}),\label{eq:Qmin}
\end{equation}
where $p_{(s)}\defeq-\Delta_{(s)}^{-1}\nabla_{(s)}$. We do not require
that $\alpha_{(s)}$ solve \eqref{Qmin} exactly; we shall require
only that it satisfy the strong Wolfe conditions,
\begin{align*}
Q(\beta^{(s)}+\alpha_{(s)}p_{(s)}) & \leq Q(\beta^{(s)})+c_{1}\alpha_{(s)}\nabla_{(s)}^{\trans}p_{(s)}\\
\smlabs{\dot{Q}(\beta^{(s)}+\alpha_{(s)}p_{(s)})^{\trans}p_{(s)}} & \leq c_{2}\smlabs{\nabla_{(s)}^{\trans}p_{(s)}}
\end{align*}
for $0<c_{1}<c_{2}<1$, where $\dot{Q}\defeq\partial_{\beta}Q$ (cf.\ (3.7)
in \citealp{NW06}). For some such $\alpha_{(s)}$, we set $\beta^{(s+1)}=\beta^{(s)}+\alpha_{(s)}p_{(s)}$.
For the Hessians $\Delta_{(s)}$, the Gauss-Newton method is only
applicable to criteria of the form $Q(\beta)=\smlnorm{g(\beta)}_{W}^{2}$,
and uses

\[
\nabla^{(s)}\defeq-(G_{(s)}^{\trans}WG_{(s)})^{\trans}G_{(s)}^{\trans}Wg(\beta^{(s)}),
\]
where $G_{(s)}\defeq[\partial_{\beta}g(\beta^{(s)})]^{\trans}$. The
Quasi-Newton method with BFGS updating starts with some initial positive
definite $\Delta_{(0)}$, and updates it according to, 
\[
\Delta_{(s+1)}=\Delta_{(s)}-\frac{\Delta_{(s)}x_{(s)}x_{(s)}^{\trans}\Delta_{(s)}}{x_{(s)}^{\trans}\Delta_{(s)}x_{(s)}}+\frac{d_{(s)}d_{(s)}^{\trans}}{d_{(s)}^{\trans}x_{(s)}},
\]
where $x_{(s)}\defeq\alpha_{(s)}p_{(s)}$ and $d_{(s)}=\nabla^{(s+1)}-\nabla^{(s)}$
(cf.\ (6.19) in \citealp{NW06}).

The trust region method considered here sets $\Delta_{(s)}=\partial_{\beta}^{2}Q(\beta^{(s)})$,
which need not be positive definite. The procedure then attempts to
approximately minimize \eqref{fapprox}, subject to the constraint
that $\smlnorm{\beta}\leq\delta_{(s)}$, where $\delta_{(s)}$ defines
the size of the trust region, which is adjusted at each iteration
depending on the value of
\[
\rho_{(s)}\defeq\frac{Q(\beta^{(s)})-Q(\beta^{(s+1})}{f_{(s)}(0)-f_{(s)}(\beta^{(s+1)})},
\]
which measures the proximity of the true reduction in $Q$ at step
$s$, with that predicted by the approximating model \eqref{fapprox};
the adjustment is made in accordance with Algorithm~4.2 in \citet{MS83SIAM}.
Various algorithms are available for approximately solving \eqref{fapprox}
in this case, but we shall assume that Algorithm~3.14 from that paper
is used.

\pagebreak[0]

\section{Proofs of theorems under high-level assumptions\label{app:mainproofs}}

Assumptions~\ref{ass:regularity} and \ref{ass:highlevel} are assumed
to hold throughout this section, including \enuref{H:deriv} with
$l=0$. Whenever we require \enuref{H:deriv} to hold for some $l\in\{1,2\}$,
this will be explicitly noted. The relationships between the theorems
and the auxiliary results (Propositions~\ref{prop:jackbind}--\ref{prop:detcvg})
is illustrated in \figref{theorems}.

\begin{figure}
\noindent \begin{centering}
\includegraphics[bb=50bp 480bp 546bp 670bp,clip,scale=0.8]{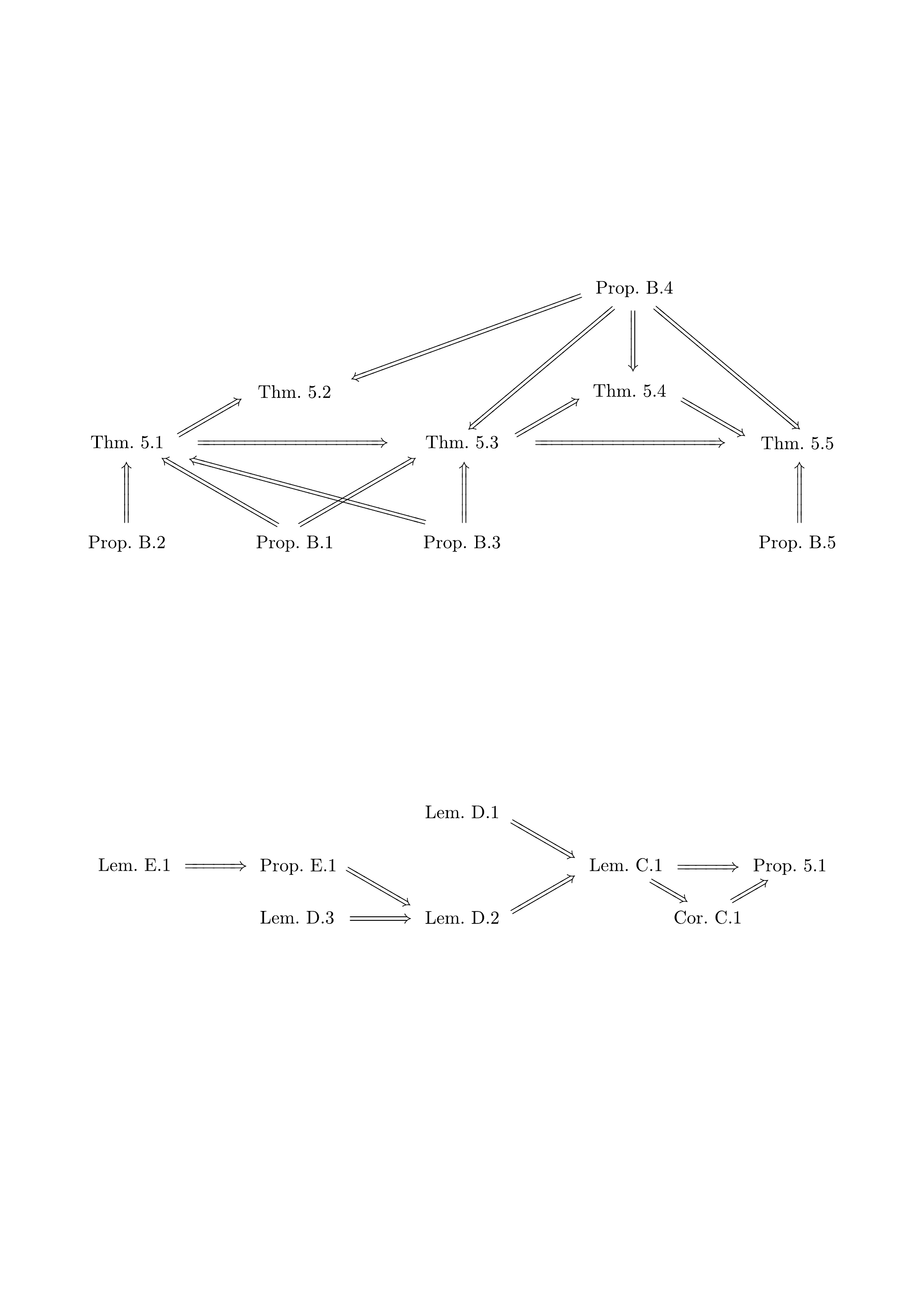}
\par\end{centering}

\protect\caption{Proofs of theorems}

\label{fig:theorems}
\end{figure}

\subsection{Preliminary results \label{app:prelimlem}}

Let $\beta_{n}\defeq\beta_{0}+n^{-1/2}\delta_{n}$ for a (possibly)
random $\delta_{n}=o_{p}(n^{1/2})$. Define 
\[
\Delta_{n}^{k}(\beta)\defeq n^{1/2}[\abv{\theta}_{n}^{k}(\beta,\lambda_{n})-\abv{\theta}_{n}^{k}(\beta_{0},\lambda_{n})]
\]
and recall that $G_{n}(\beta)\defeq\partial_{\beta}\abv{\theta}_{n}^{k}(\beta,\lambda_{n})$
and $G\defeq[\partial_{\beta}\bind(\beta_{0},0)]^{\trans}$. As in
\enuref{R:smoothing}, $\lambda_{n}=o_{p}(1)$ is an $\filt$-measurable
sequence. As per \enuref{R:jackknifing}, we fix the order of jackknifing
$k\in\{0,\ldots,k_{0}\}$ such that $n^{1/2}\lambda_{n}^{k+1}=o_{p}(1)$.
Let $\Like_{n}(\theta)\defeq\L_{n}(y,x;\theta)$ and $\L(\theta)\defeq\expect\L_{n}(\theta)$.
$\dot{\Like}_{n}$ and $\ddot{\Like}_{n}$ respectively denote the
gradient and Hessian of $\Like_{n}$, with $H\defeq\expect\ddot{\Like}_{n}(\theta)=\Like(\theta)$;
$N(\theta,\epsilon)$ denotes an open ball of radius $\epsilon$,
centered at $\theta$.
\begin{prop}
\label{prop:jackbind}~
\begin{enumerate}
\item \label{enu:smplbind}$\sup_{\beta\in\Beta}\smlnorm{\abv{\theta}_{n}^{k}(\beta,\lambda_{n})-\bind^{k}(\beta,\lambda_{n})}\inprob0$;
\item \label{enu:nobias}$\bind^{k}(\beta_{0},\lambda_{n})-\bind(\beta_{0},0)=O_{p}(\lambda_{n}^{k+1})$;
\item \label{enu:stocdiff} $\Delta_{n}^{k}(\beta_{n})=G\delta_{n}+o_{p}(1+\smlnorm{\delta_{n}})$;
\end{enumerate}
\end{prop}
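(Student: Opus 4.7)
Parts (ii) and (i) should fall out quickly. For (ii), I would simply invoke the construction in \eqref{kextrap}: the weights $\{\gamma_{rk}\}_{r=0}^{k}$ are chosen precisely so that the Taylor expansion of $\bind(\beta_0,\cdot)$ around $0$, which exists by \enuref{R:uniqueness}, yields $\bind^k(\beta_0,\lambda) - \bind(\beta_0,0) = O(\lambda^{k+1})$ as a deterministic bound; evaluating at the random sequence $\lambda_n$ then gives the required $O_p(\lambda_n^{k+1})$ statement. For (i), I would write
\[
\abv{\theta}_n^k(\beta,\lambda_n) - \bind^k(\beta,\lambda_n) = \frac{1}{Mn^{1/2}}\sum_{m=1}^M\sum_{r=0}^k \gamma_{rk}\,\psi_n^m(\beta,\delta^r\lambda_n),
\]
and dominate by $(Mn^{1/2})^{-1}\sum_{m,r}|\gamma_{rk}|\sup_{(\beta,\lambda)\in\Beta\times\Lambda}\smlnorm{\psi_n^m(\beta,\lambda)}$. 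Tightness of $\psi_n^m$ in $\ell^\infty(\Beta\times\Lambda)$, furnished by the weak convergence in \enuref{H:stocheq}, makes the supremum $O_p(1)$, so the whole expression is $O_p(n^{-1/2}) = o_p(1)$ uniformly in $\beta$. (Since $\delta\in(0,1)$ and $\lambda_n\inprob 0$, one has $\delta^r\lambda_n\in\Lambda$ eventually, so the supremum is legitimately taken over $\Beta\times\Lambda$.)

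Part (iii) is where the main work lies. I would decompose
\begin{align*}
\Delta_n^k(\beta_n) &= n^{1/2}[\abv{\theta}_n^k(\beta_n,\lambda_n) - \bind^k(\beta_n,\lambda_n)] - n^{1/2}[\abv{\theta}_n^k(\beta_0,\lambda_n) - \bind^k(\beta_0,\lambda_n)] \\
&\quad + n^{1/2}[\bind^k(\beta_n,\lambda_n) - \bind^k(\beta_0,\lambda_n)].
\end{align*}
The first line is a jackknife-weighted average, over $(m,r)$, of the increments $\psi_n^m(\beta_n,\delta^r\lambda_n) - \psi_n^m(\beta_0,\delta^r\lambda_n)$. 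Since weak convergence of $\psi_n^m$ to a continuous Gaussian limit in $\ell^\infty(\Beta\times\Lambda)$ is equivalent to its asymptotic (uniform) equicontinuity on this domain, and since $(\beta_n,\delta^r\lambda_n) - (\beta_0,\delta^r\lambda_n) = (n^{-1/2}\delta_n,0) \inprob 0$ (using $\delta_n = o_p(n^{1/2})$), each increment is $o_p(1)$ and thus so is the whole first line. For the remaining deterministic piece I would apply a first-order Taylor expansion of $\bind(\cdot,\delta^r\lambda_n)$ in its first argument, justified by the differentiability posited in \enuref{R:uniqueness}, to obtain
\[
\bind^k(\beta_n,\lambda_n) - \bind^k(\beta_0,\lambda_n) = \sum_{r=0}^k\gamma_{rk}[\partial_\beta\bind(\beta_0,\delta^r\lambda_n)]^{\trans}(\beta_n-\beta_0) + o_p(\smlnorm{\beta_n-\beta_0}).
\]
Continuity of $\partial_\beta\bind(\beta_0,\cdot)$ at $0$, combined with $\lambda_n \inprob 0$ and the jackknife identity $\sum_r\gamma_{rk} = 1$, collapses the sum into $[\partial_\beta\bind(\beta_0,0)]^{\trans}(\beta_n-\beta_0) + o_p(\smlnorm{\beta_n-\beta_0})$; multiplying by $n^{1/2}$ and using the definition $G = [\partial_\beta\bind(\beta_0,0)]^{\trans}$ yields $G\delta_n + o_p(\smlnorm{\delta_n})$, which subsumes into $G\delta_n + o_p(1 + \smlnorm{\delta_n})$ as required.

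The main obstacle is the asymptotic equicontinuity step used in the first line of the decomposition for (iii). Although it is standard that weak convergence in $\ell^\infty$ to a path-continuous limit implies asymptotic equicontinuity of the underlying sequence, a small amount of care is required because both $\beta_n$ and $\lambda_n$ are random: the equicontinuity must be deployed simultaneously in both arguments (exploiting the metric-based characterization of tightness) to conclude that the differences of $\psi_n^m$ evaluated at nearby random points collapse to $o_p(1)$, uniformly over $r\in\{0,\ldots,k\}$ and $m\in\{1,\ldots,M\}$.
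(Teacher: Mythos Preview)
Your proposal is correct and follows essentially the same route as the paper: part~(ii) is read off from \eqref{kextrap}, part~(i) uses the weak convergence in \enuref{H:stocheq} (the paper simply cites \enuref{H:stocheq} and the continuous mapping theorem), and part~(iii) uses exactly the same three-term decomposition, handling the stochastic increments via the consequence of \enuref{H:stocheq} that $\psi_n^m(\beta_n,\delta^r\lambda_n)$ and $\psi_n^m(\beta_0,\delta^r\lambda_n)$ share a common weak limit, and the deterministic term via a first-order expansion justified by \enuref{R:uniqueness}. Your treatment is if anything slightly more explicit than the paper's (e.g.\ in invoking asymptotic equicontinuity and the identity $\sum_r\gamma_{rk}=1$), but the underlying argument is the same.
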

 
\begin{prop}
\label{prop:auxdist} For $V=(1+\tfrac{1}{M})(\varmat-\covmat)$,
\begin{equation}
Z_{n}\defeq n^{1/2}[\abv{\theta}_{n}^{k}(\beta_{0},\lambda_{n})-\bind^{k}(\beta_{0},\lambda_{n})]-n^{1/2}(\hat{\theta}_{n}-\theta_{0})\wkc N[0,H^{-1}VH^{-1}].\label{eq:Zn}
\end{equation}

\end{prop}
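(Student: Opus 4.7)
My plan is to apply the asymptotic linearization in \enuref{H:limitdist} separately to each bandwidth $\delta^{r}\lambda_{n}$ appearing in the jackknife, combine the resulting expansions using the weight identity $\sum_{r=0}^{k}\gamma_{rk}=1$, and then evaluate the asymptotic variance by direct covariance bookkeeping before invoking the joint CLT and the continuous mapping theorem.

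First, using $\abv{\theta}_{n}^{k}(\beta,\lambda_{n})=\frac{1}{M}\sum_{m=1}^{M}\sum_{r=0}^{k}\gamma_{rk}\hat{\theta}_{n}^{m}(\beta,\delta^{r}\lambda_{n})$ and the analogous decomposition of $\bind^{k}$, I write
\[
\abv{\theta}_{n}^{k}(\beta_{0},\lambda_{n})-\bind^{k}(\beta_{0},\lambda_{n})=\frac{1}{M}\sum_{m=1}^{M}\sum_{r=0}^{k}\gamma_{rk}\bigl[\hat{\theta}_{n}^{m}(\beta_{0},\delta^{r}\lambda_{n})-\theta(\beta_{0},\delta^{r}\lambda_{n})\bigr].
\]
For each fixed $r$, $\delta^{r}\lambda_{n}$ is $\filt$-measurable and $\inprob 0$, so it satisfies \enuref{R:smoothing}, and \enuref{H:limitdist} yields
\[
n^{1/2}\bigl[\hat{\theta}_{n}^{m}(\beta_{0},\delta^{r}\lambda_{n})-\theta(\beta_{0},\delta^{r}\lambda_{n})\bigr]=-H^{-1}\phi_{n}^{m}+o_{p}(1)
\]
for every $m\in\{1,\ldots,M\}$ and every $r\in\{0,\ldots,k\}$. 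Since the right-hand side is free of $r$, summing over $r$ with the weights $\gamma_{rk}$ (whose sum is unity) and averaging over $m$ gives
\[
n^{1/2}\bigl[\abv{\theta}_{n}^{k}(\beta_{0},\lambda_{n})-\bind^{k}(\beta_{0},\lambda_{n})\bigr]=-H^{-1}\cdot\frac{1}{M}\sum_{m=1}^{M}\phi_{n}^{m}+o_{p}(1).
\]
Applying \enuref{H:limitdist} once more, this time for $m=0$ at the fixed point $(\beta_{0},0)$, yields $n^{1/2}(\hat{\theta}_{n}-\theta_{0})=-H^{-1}\phi_{n}^{0}+o_{p}(1)$. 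Subtracting,
\[
Z_{n}=H^{-1}\Bigl(\phi_{n}^{0}-\frac{1}{M}\sum_{m=1}^{M}\phi_{n}^{m}\Bigr)+o_{p}(1).
\]

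The joint convergence $(\phi_{n}^{0},\ldots,\phi_{n}^{M})\wkc(\phi^{0},\ldots,\phi^{M})$ from \enuref{H:limitdist}, combined with the continuous mapping theorem, gives $Z_{n}\wkc H^{-1}U$ where $U\defeq\phi^{0}-M^{-1}\sum_{m=1}^{M}\phi^{m}$ is mean-zero Gaussian. Using $\expect\phi^{m_{1}}\phi^{m_{1}\trans}=\varmat$ and $\expect\phi^{m_{1}}\phi^{m_{2}\trans}=\covmat$ for $m_{1}\neq m_{2}$ (the latter symmetry following from exchangeability of the simulation indices), a direct calculation gives
\[
\expect UU^{\trans}=\varmat-2\covmat+\frac{1}{M^{2}}\bigl[M\varmat+M(M-1)\covmat\bigr]=\Bigl(1+\frac{1}{M}\Bigr)(\varmat-\covmat)=V,
\]
so that $Z_{n}\wkc N[0,H^{-1}VH^{-1}]$, as required.

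There are no substantive obstacles: once \enuref{H:limitdist} is in hand the result reduces to linearizing, summing, and computing a covariance. The only mild subtleties are (i) justifying that the linearization applies simultaneously across the finitely many bandwidths $\{\delta^{r}\lambda_{n}\}_{r=0}^{k}$, which is immediate since each remains $\filt$-measurable and vanishes in probability and finitely many $o_{p}(1)$ terms aggregate to an $o_{p}(1)$ term; and (ii) the algebraic reduction of $\expect UU^{\trans}$ to $(1+1/M)(\varmat-\covmat)$, which crucially uses that the cross-covariance matrix $\covmat$ is the same whether it is measured between two simulated scores or between a simulated score and the data-based score $\phi^{0}$.
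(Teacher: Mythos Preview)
Your proof is correct and reaches the same destination as the paper's, but the route differs slightly. The paper invokes \enuref{H:stocheq} first: it uses the functional weak convergence $\psi_{n}^{m}\wkc\psi^{m}$ in $\ell^{\infty}(\Beta\times\Lambda)$ together with continuity of the limit process to deduce that $\sum_{r}\gamma_{rk}\psi_{n}^{m}(\beta_{0},\delta^{r}\lambda_{n})\wkc\psi^{m}(\beta_{0},0)$, jointly across $m$, and only then appeals to \enuref{H:limitdist} to identify $\psi^{m}(\beta_{0},0)$ with $-H^{-1}\phi^{m}$ for the variance computation. You instead bypass \enuref{H:stocheq} entirely and apply the linearization in \enuref{H:limitdist} directly at each of the finitely many bandwidths $\delta^{r}\lambda_{n}$, which immediately gives the common leading term $-H^{-1}\phi_{n}^{m}$ independent of $r$. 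Your approach is arguably more elementary, since it needs only the pointwise expansion rather than the full process-level convergence; the paper's approach would extend more readily to situations where the jackknife combined infinitely many or data-dependent bandwidths. One small remark: in your closing comment~(ii), the fact that the cross-covariance between $\phi^{0}$ and $\phi^{m}$ ($m\geq1$) equals $\covmat$ is not really a consequence of exchangeability of simulation indices (since $m=0$ refers to the data, not a simulation) but is simply part of the statement of \enuref{H:limitdist}, which asserts \eqref{scorecovar} for \emph{all} $m_{1},m_{2}\in\{0,1,\ldots,M\}$.
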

 
\begin{prop}
\label{prop:unifcvg}~
\begin{enumerate}
\item \label{enu:Qunif}$Q_{nk}^{e}(\beta,\lambda_{n})\inprob Q_{k}^{e}(\beta,0)\eqdef Q^{e}(\beta)$
uniformly on $\Beta$;
\item \label{enu:Qwellsep}for every $\epsilon>0$, $\inf_{\beta\in\Beta\backslash N(\beta_{0},\epsilon)}Q^{e}(\beta)>Q(\beta_{0})$;
and
\end{enumerate}
\end{prop}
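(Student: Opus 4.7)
The plan is to prove the two parts separately, first establishing the auxiliary fact underlying part~(i) that $\abv{\theta}_{n}^{k}(\beta,\lambda_{n})$ converges uniformly on $\Beta$ to $\bind(\beta,0)$ in probability. I would split this as
$$\abv{\theta}_{n}^{k}(\beta,\lambda_{n}) - \bind(\beta,0) = [\abv{\theta}_{n}^{k}(\beta,\lambda_{n}) - \bind^{k}(\beta,\lambda_{n})] + [\bind^{k}(\beta,\lambda_{n}) - \bind(\beta,0)].$$
The first bracket is $o_{p}(1)$ uniformly on $\Beta$ by part~\enuref{smplbind} of \propref{jackbind}. The second equals $\sum_{r=0}^{k}\gamma_{rk}[\bind(\beta,\delta^{r}\lambda_{n}) - \bind(\beta,0)]$, since $\sum_{r}\gamma_{rk}=1$; joint continuity of $\bind$ on the compact $\Beta\times\Lambda$ (implicit in the extrapolation construction of \subref{bias}), together with $\lambda_{n}\inprob 0$, makes this $o_{p}(1)$ uniformly in $\beta$ as well.

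For the Wald specialization of part~(i), I combine this with $\hat{\t}_{n}\inprob\theta_{0}$ (standard $M$-estimator consistency, implicit in \enuref{H:limitdist} taken at $m=0$) and $W_{n}\inprob W$ from \enuref{R:weight}, and apply the continuous mapping theorem to obtain
$$Q_{nk}^{\Wald}(\beta,\lambda_{n})\inprob\smlnorm{\bind(\beta,0)-\theta_{0}}_{W}^{2} = Q^{\Wald}(\beta)$$
uniformly on $\Beta$. For the LR specialization, I invoke \enuref{H:unifcmpct} (with $l=0$) on a compact neighborhood of the compact image $\{\bind(\beta,0):\beta\in\Beta\}\subset\intr\Theta$, along with continuity of $\L$ on this neighborhood, to conclude $Q_{nk}^{\LR}(\beta,\lambda_{n})\inprob -\L(\bind(\beta,0))= Q^{\LR}(\beta)$ uniformly.

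For part~(ii), the argument proceeds by identification. In the Wald case, $Q^{\Wald}(\beta)=\smlnorm{\bind(\beta,0)-\bind(\beta_{0},0)}_{W}^{2}\geq 0$ vanishes iff $\bind(\beta,0)=\bind(\beta_{0},0)$, which by \enuref{R:injective} forces $\beta=\beta_{0}$. In the LR case, $\theta_{0}$ is the unique maximizer of $\L$ (implicit in the linearization of \enuref{H:limitdist} at $m=0$), so $-\L(\bind(\beta,0))>-\L(\theta_{0})$ unless $\bind(\beta,0)=\theta_{0}=\bind(\beta_{0},0)$, and \enuref{R:injective} again forces $\beta=\beta_{0}$. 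Either way, $Q^{e}$ is continuous on the compact set $\Beta\backslash N(\beta_{0},\epsilon)$, so its infimum there is attained and strictly exceeds $Q^{e}(\beta_{0})$.

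The main delicacy is the uniform convergence of $\bind^{k}(\beta,\lambda_{n})$ to $\bind(\beta,0)$: part~\enuref{nobias} of \propref{jackbind} only delivers this (at rate $O_{p}(\lambda_{n}^{k+1})$) at $\beta=\beta_{0}$, so my argument for part~(i) instead leans on uniform continuity of $\bind$ in $\lambda$ at $0$, uniformly in $\beta$, which follows from joint continuity of $\bind$ on $\Beta\times\Lambda$ (an ingredient already needed for the Taylor expansion underlying \eqref{kextrap}). A secondary technical point, pertinent only to the LR case, is confirming that $\{\bind(\beta,0):\beta\in\Beta\}$ is contained in a compact subset of $\intr\Theta$; this is immediate whenever $\Theta$ is open (as in the Gaussian SUR setting), and otherwise follows from a routine application of the smoothness of $\bind$ together with $\theta_{0}\in\intr\Theta$.
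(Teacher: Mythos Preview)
Your proposal is correct and follows essentially the same approach as the paper's proof: part~(i) via \propref{jackbind}\enuref{smplbind} (plus continuity of $\bind$ in $\lambda$) combined with \enuref{H:unifcmpct} and the continuous mapping theorem for the LR case, and part~(ii) via \enuref{R:injective}, continuity of $\bind$ and $\L$, and compactness of $\Beta$. If anything, you are more explicit than the paper about the step $\bind^{k}(\beta,\lambda_{n})\to\bind(\beta,0)$ uniformly in $\beta$, which the paper leaves implicit in its citation of \propref{jackbind}\enuref{smplbind}.
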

 
\begin{prop}
\label{prop:derivcvg}~ If \enuref{H:deriv} holds for $l=1$, then
\begin{enumerate}
\item \label{enu:Jacobian} $G_{n}(\beta_{n})\inprob G$; and
\end{enumerate}
if \enuref{H:deriv} holds for $l\in\{1,2\}$ then, uniformly on $\Beta$,
\begin{enumerate}[resume]
\item \label{enu:bindderiv}$\sup_{\beta\in\Beta}\smlnorm{\partial_{\beta}^{l}\abv{\theta}_{n}^{k}(\beta,\lambda_{n})-\partial_{\beta}^{l}\theta(\beta,0)}=o_{p}(1)$;
and
\item \label{enu:Qderiv} $\partial_{\beta}^{l}Q_{nk}^{e}(\beta,\lambda_{n})\inprob\partial_{\beta}^{l}Q_{k}^{e}(\beta,0)=\partial_{\beta}^{l}Q(\beta)$.
\end{enumerate}
\end{prop}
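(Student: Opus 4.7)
I would establish Part~(ii) first and then deduce Parts~(i) and (iii) from it. For Part~(ii), the jackknife representation gives
\[
\partial_\beta^l \abv{\theta}_n^k(\beta,\lambda_n) \;=\; \sum_{r=0}^{k} \gamma_{rk}\, \frac{1}{M}\sum_{m=1}^{M} \partial_\beta^l \hat{\theta}_n^m(\beta,\delta^r \lambda_n),
\]
with $\sum_{r=0}^k \gamma_{rk}=1$. Each sequence $\{\delta^r\lambda_n\}_{r=0}^k$ is $\filt$-measurable and of the same order as $\{\lambda_n\}$, so by the $\lambda$-uniform version of \enuref{H:deriv} noted in \remref{derivunif}, $\sup_{\beta\in\Beta}\smlnorm{\partial_\beta^l \hat{\theta}_n^m(\beta,\delta^r\lambda_n) - \partial_\beta^l\bind(\beta,\delta^r\lambda_n)} = o_p(1)$ for each $(r,m)$. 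Continuity of $\partial_\beta^l\bind(\beta,\cdot)$ at $\lambda=0$---implicit in the smoothness in $\lambda$ already invoked for the Taylor expansion underlying the jackknife in \subref{bias}---then gives $\sup_{\beta\in\Beta}\smlnorm{\partial_\beta^l \bind(\beta,\delta^r\lambda_n) - \partial_\beta^l \bind(\beta,0)} = o_p(1)$. Summing over $r$ and $m$ and using $\sum_r \gamma_{rk}=1$ yields Part~(ii).

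Part~(i) follows by the triangle inequality applied to Part~(ii) with $l=1$. Writing
\[
\smlnorm{G_n(\beta_n) - G} \;\leq\; \sup_{\beta\in\Beta} \smlnorm{\partial_\beta \abv{\theta}_n^k(\beta,\lambda_n) - \partial_\beta\bind(\beta,0)} + \smlnorm{\partial_\beta\bind(\beta_n,0) - \partial_\beta\bind(\beta_0,0)},
\]
the first term is $o_p(1)$ by Part~(ii), while the second is $o_p(1)$ because $\beta_n-\beta_0 = n^{-1/2}\delta_n = o_p(1)$ (since $\delta_n=o_p(n^{1/2})$) and $\partial_\beta \bind(\cdot,0)$ is continuous on $\intr\Beta$ by \enuref{R:uniqueness}.

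For Part~(iii) I would compute $\partial_\beta^l Q_{nk}^e(\beta,\lambda_n)$ by the chain rule for each $e\in\{\Wald,\LR\}$. The resulting expressions are polynomials in $\abv{\theta}_n^k(\beta,\lambda_n) - \hat{\theta}_n$, in $\partial_\beta^j \abv{\theta}_n^k(\beta,\lambda_n)$ for $j\in\{0,1,2\}$, in $W_n$, and---for $e=\LR$---in $\partial_\theta^j \Like_n(\abv{\theta}_n^k(\beta,\lambda_n))$ for $j\in\{1,2\}$. The $\abv{\theta}_n^k$-factors and their $\beta$-derivatives are uniformly controlled by \propref{jackbind}(i)--(ii) and Part~(ii); $\hat{\theta}_n\inprob\theta_0$ follows from \enuref{H:stocheq}; and $W_n\inprob W$ from \enuref{R:weight}. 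For the $\Like_n$-factors, \propref{jackbind}(i)--(ii) together with continuity of $\bind(\cdot,0)$ and $\bind(\beta_0,0)=\theta_0\in\intr\Theta$ place $\abv{\theta}_n^k(\beta,\lambda_n)$ uniformly in a fixed compact subset of $\intr\Theta$ with probability approaching one, on which the compact-uniform convergence of $\partial_\theta^j \Like_n$ in \enuref{H:unifcmpct} applies. Collecting terms gives uniform convergence on $\Beta$, and $\partial_\beta^l Q_k^e(\beta,0)=\partial_\beta^l Q(\beta)$ is immediate from applying the chain rule to the population criterion.

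\textbf{Main obstacle.} The individual steps are conceptually routine, so the main difficulty is the bookkeeping for Part~(iii) in the $\LR$ case with $l=2$: $\partial_\beta^2 Q_{nk}^\LR$ expands into one term involving $\ddot{\Like}_n(\abv{\theta}_n^k)$ contracted with $(\partial_\beta \abv{\theta}_n^k)^{\otimes 2}$ and another involving $\dot{\Like}_n(\abv{\theta}_n^k)$ contracted with $\partial_\beta^2 \abv{\theta}_n^k$, and one must verify that each piece converges uniformly and that the compact subset of $\intr\Theta$ on which \enuref{H:unifcmpct} is invoked can be chosen independently of $\beta$. This is exactly what forces Part~(iii) (and hence the convergence of the trust-region-applicable second derivatives used later) to rely on the stronger $l=2$ form of \enuref{H:deriv}, rather than on the $l=1$ form that suffices for Parts~(i)--(ii).
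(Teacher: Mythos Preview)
Your proposal is correct and follows essentially the same route as the paper's proof: Part~(ii) from the jackknife representation and \enuref{H:deriv}, Part~(i) as a consequence of Part~(ii), and Part~(iii) by computing the chain-rule derivatives and controlling each factor via \enuref{H:unifcmpct}, \enuref{R:weight}, and the convergence established in the earlier parts. Your detour through \remref{derivunif} for Part~(ii) is valid but slightly roundabout---since each $\delta^r\lambda_n$ is itself an $\filt$-measurable sequence satisfying \enuref{R:smoothing}, you can apply \enuref{H:deriv} directly with $\delta^r\lambda_n$ in place of $\lambda_n$ (the target in \enuref{H:deriv} is already $\partial_\beta^l\bind(\beta,0)$, so no separate continuity argument in $\lambda$ is needed), then average using $\sum_r\gamma_{rk}=1$; this is what the paper's ``immediate from \enuref{H:deriv}, \eqref{jackknifed} and the continuous mapping theorem'' amounts to.
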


For the next result, let $U:\Gamma\setmap\reals$ be twice continuously
differentiable with a global minimum at $\gamma^{\ast}$. Let $R_{U}\defeq\{\gamma\in\Gamma\mid\smlnorm{\partial_{\gamma}U(\gamma)}<\epsilon\}$
for some $\epsilon>0$, and $S_{U}\defeq\{\gamma\in R_{U}\mid\varrho_{\min}[\partial_{\gamma}^{2}U(\gamma)]\geq0\}$.
Applying a routine $r\in\{\gn,\qn,\tn\}$ to $U$ yields the iterates
$\{\gamma^{(s)}\}$; let
\[
\abv{\gamma}(\gamma^{(0)},r)\defeq\begin{cases}
\gamma^{(s^{\ast})} & \text{if }\gamma^{(s)}\in R_{U}\text{ for some }s\in\naturals\\
\gamma^{(0)} & \text{otherwise},
\end{cases}
\]
where $s^{\ast}$ denotes the smallest $s$ for which $\gamma^{(s)}\in R_{U}$.
When $r=\tn$, the definition of $\abv{\gamma}(\gamma^{(0)},\tn)$
is analogous, but with $S_{U}$ in place of $R_{U}$. In the statement
of the next result, $\Gamma_{0}\defeq\{\gamma\in\Gamma\mid U(\gamma)\leq U(\gamma_{1})\}$
for some $\gamma_{1}\in\Gamma$, and is a compact set with $\gamma^{\ast}\in\intr\Gamma_{0}$.
For a function $m:\Gamma\elmap\reals^{d_{m}}$, let $M(\gamma)\defeq[\partial_{\gamma}m(\gamma)]^{\trans}$
denote its Jacobian. 
\begin{prop}
\label{prop:detcvg} Let $r\in\{\qn,\tn\}$, and suppose that in addition
to the preceding, either
\begin{enumerate}
\item $r=\gn$ and $U(\gamma)=\smlnorm{m(\gamma)}^{2}$, with $\inf_{\gamma\in\Gamma_{0}}\sigma_{\min}[M(\gamma)]>0$;
or
\item $r=\qn$ and $U$ is strictly convex on $\Gamma_{0}$;
\end{enumerate}
then $\abv{\gamma}(\gamma^{(0)},r)\in R_{U}\intsect\Gamma_{0}$ for
all $\gamma^{(0)}\in\Gamma_{0}$. Alternatively, if $r=\tn$, then
$\abv{\gamma}(\gamma^{(0)},r)\in S_{U}\intsect\Gamma_{0}$ for all
$\gamma^{(0)}\in\Gamma_{0}$.
\end{prop}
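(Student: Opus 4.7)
The plan is to treat each routine $r \in \{\gn, \qn, \tn\}$ separately, first showing by monotonicity that the iterates $\{\gamma^{(s)}\}$ remain in the compact level set $\Gamma_0$, and then invoking the standard convergence theorem associated with that routine to deduce that $\gamma^{(s)}$ enters $R_U$ (or $S_U$) in finite time. All three routines are descent methods: the strong Wolfe conditions imply $U(\gamma^{(s+1)}) \leq U(\gamma^{(s)}) + c_1 \alpha_{(s)} \nabla_{(s)}^{\trans} p_{(s)}$ with $\nabla_{(s)}^{\trans} p_{(s)} < 0$ for the two line-search algorithms, while the Mor\'e--Sorensen update rule for the trust-region radius accepts only steps with $\rho_{(s)}$ bounded below by a positive constant, again giving $U(\gamma^{(s+1)}) \leq U(\gamma^{(s)})$. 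Starting from $\gamma^{(0)} \in \Gamma_0$ yields $U(\gamma^{(s)}) \leq U(\gamma_1)$, i.e.\ $\{\gamma^{(s)}\} \subset \Gamma_0$, and by compactness of $\Gamma_0$ this gives uniform bounds on $\nabla U$, $\nabla^2 U$ and (for Gauss--Newton) $M$ along the trajectory.

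For the second step I would invoke one classical theorem per routine. For Gauss--Newton, the assumption $\inf_{\gamma \in \Gamma_0} \sigma_{\min}[M(\gamma)] > 0$ combined with compactness bounds the condition number of $M^{\trans} M$ uniformly on $\Gamma_0$, which in turn gives a uniform angle bound between the Gauss--Newton direction $p_{(s)} = -(M_{(s)}^{\trans} M_{(s)})^{-1} M_{(s)}^{\trans} m(\gamma^{(s)})$ and $-\nabla U(\gamma^{(s)})$; Zoutendijk's condition (Theorems~3.2 and~10.1 in \citealp{NW06}) then yields $\|\nabla U(\gamma^{(s)})\| \goesto 0$. For BFGS on a strictly convex, twice continuously differentiable $U$ with bounded level set $\Gamma_0$, global convergence to the unique minimizer follows from Powell's theorem (Theorem~6.5 in \citealp{NW06}), so again $\|\nabla U(\gamma^{(s)})\| \goesto 0$. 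For the Mor\'e--Sorensen trust-region scheme, Theorem~4.13 of \citet{MS83SIAM} produces at least one accumulation point $\gamma^{\ast}$ of $\{\gamma^{(s)}\}$ with $\nabla U(\gamma^{\ast}) = 0$ and $\varrho_{\min}[\nabla^2 U(\gamma^{\ast})] \geq 0$. In the first two cases, $\|\nabla U(\gamma^{(s)})\| \goesto 0$ implies $\gamma^{(s)} \in R_U$ for all $s$ large, so $s^{\ast}$ is finite and $\abv{\gamma}(\gamma^{(0)}, r) \in R_U \intsect \Gamma_0$; in the trust-region case, a subsequence of iterates converges to $\gamma^{\ast}$, and by continuity of $\nabla U$ and of $\varrho_{\min} \compose \nabla^2 U$ this subsequence eventually lies in $S_U$, again making $s^{\ast}$ finite.

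The main obstacle is the subsequential nature of the trust-region conclusion, combined with the fact that $\varrho_{\min}[\nabla^2 U(\cdot)] \geq 0$ is a \emph{closed} condition that need not propagate from a limit point to nearby iterates when $\nabla^2 U(\gamma^{\ast})$ has a vanishing eigenvalue. In every application of this proposition in the paper, $\gamma^{\ast}$ corresponds to the global minimizer $\beta_0$ of the smooth population criterion $Q$, at which $\nabla^2 Q$ is positive definite under the identification conditions maintained in \enuref{R:injective} (together with \assref{routine} for the particular routine in use); continuity of $\varrho_{\min}[\nabla^2 U]$ then gives $\varrho_{\min}[\nabla^2 U(\gamma^{(s_j)})] > 0$ for all $j$ sufficiently large, completing the argument. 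The other routine-specific regularity checks -- bounded condition number for Gauss--Newton, and uniform positive-definiteness of the BFGS Hessian approximations $\Delta_{(s)}$ on $\Gamma_0$ -- are standard consequences of compactness and strict convexity, and would be verified \emph{en passant}.
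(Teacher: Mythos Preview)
Your approach is essentially the same as the paper's: the paper's proof consists of nothing more than the three citations --- Theorem~10.1 in \citet{NW06} for $\gn$, Theorem~6.5 in \citet{NW06} for $\qn$, and Theorem~4.13 in \citet{MS83SIAM} for $\tn$ --- and you invoke exactly these, with additional scaffolding (descent, containment in $\Gamma_0$, Zoutendijk's angle condition) that the paper leaves implicit.

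You go further than the paper in one respect: you correctly flag a subtlety in the $\tn$ case that the paper does not address. The condition $\varrho_{\min}[\partial_{\gamma}^{2}U(\gamma)]\geq 0$ defining $S_U$ is closed, so convergence of a subsequence to a second-order point $\gamma^{\ast}$ with $\varrho_{\min}[\partial_{\gamma}^{2}U(\gamma^{\ast})]=0$ does not by itself force any iterate into $S_U$. Your resolution --- that in every use of this proposition (namely the proof of \thmref{optim}) the relevant limit is $\beta_0$, at which the Hessian is strictly positive definite under the maintained identification conditions --- is the right one, and is tacitly what the paper relies on as well. Strictly speaking this means the abstract proposition as stated is slightly stronger than what either proof actually establishes for $\tn$, but the downstream application is unaffected.
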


\subsection{Proofs of Theorems~\ref{thm:limitdist}--\ref{thm:optim}}

Throughout this section, $\beta_{n}\defeq\beta_{0}+n^{-1/2}\delta_{n}$
for a (possibly) random $\delta_{n}=o_{p}(n^{1/2})$. Let $Q_{n}^{\Wald}(\beta)\defeq Q_{nk}^{\Wald}(\beta,\lambda_{n})$,
$Q_{n}^{\LR}(\beta)\defeq Q_{nk}^{\LR}(\beta,\lambda_{n})$, and $\abv{\theta}_{n}(\beta)\defeq\abv{\theta}_{n}^{k}(\beta,\lambda_{n})$.
\begin{proof}[Proof of \thmref{limitdist}]
 We first consider the Wald estimator. We have
\[
n[Q_{n}^{\Wald}(\beta_{n})-Q_{n}^{\Wald}(\beta_{0})]=2n^{1/2}[\abv{\theta}_{n}^{k}(\beta_{0})-\hat{\theta}_{n}]^{\trans}W_{n}\Delta_{n}^{k}(\beta_{n})+\Delta_{n}^{k}(\beta_{n})^{\trans}W_{n}\Delta_{n}^{k}(\beta_{n}).
\]
For $Z_{n}$ as defined in \eqref{Zn}, we see that by \propref{jackbind}\enuref{nobias}
and \enuref{R:jackknifing}
\begin{equation}
n^{1/2}[\abv{\theta}_{n}^{k}(\beta_{0})-\hat{\theta}]=Z_{n}+n^{1/2}[\bind^{k}(\beta_{0},\lambda_{n})-\theta_{0}]=Z_{n}+o_{p}(1),\label{eq:binddiffZ}
\end{equation}
whence by \propref{jackbind}\enuref{stocdiff}, 
\begin{equation}
n[Q_{n}^{\Wald}(\beta_{n})-Q_{n}^{\Wald}(\beta_{0})]=2Z_{n}^{\trans}WG\delta_{n}+\delta_{n}^{\trans}G^{\trans}WG\delta_{n}+o_{p}(1+\smlnorm{\delta_{n}}+\smlnorm{\delta_{n}}^{2}).\label{eq:Waldexp}
\end{equation}

Now consider the LR estimator. Twice continuous differentiability
of the likelihood yields 
\begin{align*}
n[Q_{n}^{\LR}(\beta)-Q_{n}^{\LR}(\beta_{0})] & =-n[\Like_{n}(\abv{\theta}_{n}^{k}(\beta_{n}))-\Like_{n}(\abv{\theta}_{n}^{k}(\beta_{0}))]\\
 & =-n^{1/2}\dot{\Like}_{n}(\abv{\theta}_{n}^{k}(\beta_{0}))^{\trans}\Delta_{n}^{k}(\beta_{n})-\frac{1}{2}\Delta_{n}^{k}(\beta_{n})^{\trans}\ddot{\L}_{n}(\abv{\theta}_{n}^{k}(\beta_{0}))\Delta_{n}^{k}(\beta_{n})\\
 & \qquad\qquad\qquad+o_{p}(\smlnorm{\Delta_{n}^{k}(\beta_{n})}^{2})
\end{align*}
where by \propref{jackbind}\enuref{nobias} and \enuref{H:limitdist},
\begin{align}
n^{1/2}\dot{\L}_{n}[\abv{\theta}_{n}^{k}(\beta_{0})] & =n^{1/2}\dot{\L}_{n}(\theta_{0})+\ddot{\L}_{n}(\theta_{0})n^{1/2}[\abv{\theta}_{n}^{k}(\beta_{0})-\theta_{0}]+o_{p}(1)\nonumber \\
 & =H[Z_{n}+n^{1/2}(\bind^{k}(\beta_{0},\lambda_{n})-\theta_{0})]\nonumber \\
 & =HZ_{n}+o_{p}(1)\label{eq:LHlimit}
\end{align}
for $Z_{n}$ as in \eqref{Zn}. Thus by \propref{jackbind}\enuref{stocdiff},
\begin{align}
n[Q_{n}^{\LR}(\beta_{n})-Q_{n}^{\LR}(\beta_{n})] & =-Z_{n}^{\trans}HG\delta_{n}-\frac{1}{2}\delta_{n}^{\trans}G^{\trans}HG\delta_{n}+o_{p}(1+\smlnorm{\delta_{n}}+\smlnorm{\delta_{n}}^{2}).\label{eq:LRexp}
\end{align}

Consistency of $\hat{\beta}_{nk}^{e}$ follows from parts \enuref{Qunif}
and \enuref{Qwellsep} of \propref{unifcvg} and Corollary~3.2.3
in \citet{VVW96}. Thus by applying Theorem~3.2.16 in \citet{VVW96}
-- or more precisely, the arguments following their (3.2.17) -- to
\eqref{Waldexp} and \eqref{LRexp}, we have
\begin{equation}
n^{1/2}(\hat{\beta}_{nk}^{e}-\beta_{0})=-(G^{\trans}U_{e}G)^{-1}G^{\trans}U_{e}Z_{n}+o_{p}(1)\label{eq:hatbetalimit}
\end{equation}
for $U_{e}$ as in \eqref{UVe}; the result now follows by \propref{auxdist}.
\end{proof}
 
\begin{proof}[Proof of \thmref{feasible}]
 We first note that, in consequence of \enuref{H:stocheq} and \thmref{limitdist},
$\hat{\beta}_{nk}^{e}\inprob\beta_{0}$, $\hat{\theta}_{n}\inprob\theta_{0}$,
and $\hat{\theta}_{n}^{m}\defeq\hat{\theta}_{n}^{m}(\beta_{nk}^{e},\lambda_{n})\inprob\theta_{0}$.
Part~\enuref{feas:hess} then follows from \enuref{R:scorevar},
\enuref{H:unifcmpct}, and Lemma~2.4 in \citet{New94}. Defining
$\dot{\like}_{i}^{m}(\theta_{0})\defeq\dot{\like}_{i}^{m}(\beta_{0},0;\theta_{0})$
for $m\in\{1,\ldots,M\}$ and
\[
\varsigma_{i}^{\trans}\defeq\begin{bmatrix}\dot{\like}_{i}^{0}(\theta_{0})^{\trans} & \dot{\like}_{i}^{1}(\beta_{0},0;\theta_{0})^{\trans} & \cdots & \dot{\like}_{i}^{M}(\beta_{0},0;\theta_{0})^{\trans}\end{bmatrix},
\]
\enuref{H:unifcmpct} and \enuref{H:limitdist} further imply that
\[
A^{\trans}\left(\frac{1}{n}\sum_{i=1}^{n}s_{ni}s_{ni}^{\trans}\right)A\inprob A^{\trans}(\expect\varsigma_{i}\varsigma_{i}^{\trans})A=A^{\trans}\begin{bmatrix}\varmat & \covmat & \cdots & \covmat\\
\covmat & \varmat & \cdots & \covmat\\
\vdots & \vdots & \ddots & \vdots\\
\covmat & \covmat & \cdots & \varmat
\end{bmatrix}A=V.
\]
Part\ \enuref{feas:G} is an immediate consequence of \propref{derivcvg}\enuref{Jacobian}.
\end{proof}
 
\begin{proof}[Proof of \thmref{rootdist}]
 We first prove part \enuref{setcons}. Let $\dot{Q}_{n}^{e}(\beta)\defeq\partial_{\beta}Q_{n}^{e}(\beta)$
and $\dot{Q}^{e}(\beta)\defeq\partial_{\beta}Q^{e}(\beta,0)$. Since
$\beta_{0}\in\intr\Beta$ and $Q_{n}^{e}(\beta)\inprob Q(\beta)$
uniformly on $\Beta$, the global minimum of $Q_{n}^{e}$ is interior
to $\Beta$, w.p.a.1., whence $R_{nk}^{e}$ is non-empty w.p.a.1.
Letting $\{\tilde{\beta}_{n}\}$ denote a (random) sequence with $\tilde{\beta}_{n}\in R_{nk}^{e}$
for all $n$ sufficiently large, we have by \propref{derivcvg}\enuref{Qderiv}
that 
\begin{equation}
\dot{Q}^{e}(\tilde{\beta}_{n})=\dot{Q}_{n}^{e}(\tilde{\beta}_{n})+o_{p}(1)=o_{p}(1+c_{n})=o_{p}(1).\label{eq:Qdotcvg}
\end{equation}
Since $\dot{Q}^{e}$ is continuous and $\Beta$ compact, it follows
that $d(\tilde{\beta}_{n},R^{e})\inprob0$, whence $d_{L}(R_{nk}^{e},R^{e})\inprob0$.

We now turn to part \enuref{consistentroot}. Recall $\beta_{n}=\beta_{0}+n^{1/2}\delta_{n}$
for some $\delta_{n}=o_{p}(n^{1/2})$. For the Wald criterion, taking
$\delta_{n}$ such that $\beta_{n}\in R_{nk}^{\Wald}$ gives
\[
o_{p}(1)=n^{1/2}\dot{Q}_{n}^{\Wald}(\beta_{n})^{\trans}=2[n^{1/2}(\abv{\theta}_{n}^{k}(\beta_{n})-\hat{\theta}_{n})]^{\trans}WG_{n}(\beta_{n})
\]
where, for $Z_{n}$ as in \eqref{Zn},
\[
n^{1/2}(\abv{\theta}_{n}^{k}(\beta_{n})-\hat{\theta}_{n})=n^{1/2}(\abv{\theta}_{n}^{k}(\beta_{0})-\hat{\theta}_{n})+\Delta_{n}^{k}(\beta_{n})=Z_{n}+G\delta_{n}+o_{p}(1+\smlnorm{\delta_{n}})
\]
by \eqref{binddiffZ}, \enuref{R:jackknifing}, and parts \enuref{nobias}
and \enuref{stocdiff} of \propref{jackbind}. Hence, using \propref{derivcvg}\enuref{Jacobian},
\begin{equation}
o_{p}(1)=2[\delta_{n}^{\trans}G^{\trans}WG+Z_{n}^{\trans}WG]+o_{p}(1+\smlnorm{\delta_{n}}).\label{eq:scoreexpW}
\end{equation}

Similarly, for the LR criterion, taking $\beta_{n}\in R_{nk}^{\LR}$
in this case gives
\[
o_{p}(1)=n^{1/2}\partial_{\beta}Q_{n}^{\LR}(\beta_{n})^{\trans}=n^{1/2}\dot{\L}_{n}[\abv{\theta}_{n}^{k}(\beta_{n})]^{\trans}G_{n}(\beta_{n})
\]
where by the twice continuous differentiability of the likelihood,
\propref{jackbind}\enuref{stocdiff} and \eqref{LHlimit}, 
\begin{align*}
n^{1/2}\dot{\L}_{n}[\abv{\theta}_{n}^{k}(\beta_{n})] & =n^{1/2}\dot{\L}_{n}[\abv{\theta}_{n}^{k}(\beta_{0})]+\ddot{\L}_{n}(\abv{\theta}_{n}^{k}(\beta_{0}))\Delta_{n}^{k}(\beta_{n})+o_{p}(\smlnorm{\Delta_{n}^{k}(\beta_{n})})\\
 & =HZ_{n}+HG\delta_{n}+o_{p}(1+\smlnorm{\delta_{n}}).
\end{align*}
Thus by \propref{derivcvg}\enuref{Jacobian},
\begin{equation}
o_{p}(1)=\delta_{n}^{\trans}G^{\trans}HG+Z_{n}^{\trans}HG+o_{p}(1+\smlnorm{\delta_{n}}).\label{eq:scoreexpLR}
\end{equation}

By specializing \eqref{scoreexpW} and \eqref{scoreexpLR} to the
case where $\delta_{n}=n^{1/2}(\tilde{\beta}_{nk}^{e}-\beta_{0})$,
for $\tilde{\beta}_{nk}^{e}$ satisfying the requirements of part
\enuref{consistentroot} of the theorem, we see that for $U_{e}$
as in \eqref{UVe}, 
\[
n^{1/2}(\tilde{\beta}_{nk}^{e}-\beta_{0})=-(G^{\trans}U_{e}G)^{-1}G^{\trans}U_{e}Z_{n}+o_{p}(1)=n^{1/2}(\hat{\beta}_{nk}^{e}-\beta_{0})+o_{p}(1)
\]
for $e\in\{\Wald,\LR\}$, in consequence of \eqref{hatbetalimit}.

Finally, we turn to part \enuref{consistexists}. Let $\hat{\beta}_{n}$
denote the minimizer of $Q_{n}^{e}(\beta)$, which lies in $R_{nk}$
w.p.a.1., by part \enuref{setcons}, and $\tilde{\beta}_{n}$ another
(random) sequence satisfying the requirements of part \enuref{consistexists}.
By \propref{unifcvg} and the consistency of $\hat{\beta}_{n}$ (\thmref{limitdist}),
\[
Q^{e}(\beta_{0})+o_{p}(1)=Q_{n}^{e}(\hat{\beta}_{n})+o_{p}(1)\geq Q_{n}^{e}(\tilde{\beta}_{n})\geq Q_{n}^{e}(\hat{\beta}_{n})=Q^{e}(\beta_{0})+o_{p}(1).
\]
Thus $Q^{e}(\tilde{\beta}_{n})=Q_{n}^{e}(\tilde{\beta}_{n})+o_{p}(1)\inprob Q^{e}(\beta_{0})$,
also by \propref{unifcvg}; whence $\tilde{\beta}_{n}\inprob\beta_{0}$,
since $Q^{e}$ has a well-separated minimum at $\beta_{0}$.
\end{proof}
 
\begin{proof}[Proof of \thmref{root2nd}]
 Let $\ddot{Q}_{n}^{e}(\beta)\defeq\partial_{\beta}^{2}Q_{n}^{e}(\beta)$,
$\ddot{Q}^{e}(\beta)\defeq\partial_{\beta}^{2}Q^{e}(\beta,0)$, and
$\{\tilde{\beta}_{n}\}$ be a (random) sequence with $\tilde{\beta}_{n}\in S_{nk}^{e}$
for all $n$ sufficiently large. Then by \propref{derivcvg}\enuref{Qderiv},
\[
\Prob\{\varrho_{\min}[\ddot{Q}^{e}(\tilde{\beta}_{n})]<-\epsilon\}=\Prob\{\varrho_{\min}[\ddot{Q}_{n}^{e}(\tilde{\beta}_{n})]+o_{p}(1)<-\epsilon\}\leq\Prob\{o_{p}(1)<-\epsilon\}\goesto0
\]
for any $\epsilon>0$. Hence by \thmref{rootdist}\enuref{setcons},
the continuity of $\dot{Q}^{e}$ and $\ddot{Q}^{e}$, $d(\tilde{\beta}_{n},S^{e})\inprob0$.
Part \enuref{consistentroot} follows immediately from the corresponding
part of \thmref{rootdist} and the fact that $S_{nk}^{e}\subseteq R_{nk}^{e}$.
\end{proof}
 
\begin{proof}[Proof of \thmref{optim}]
 For each $r\in\{\gn,\qn,\tn\}$, suppose that there exists a $\Beta_{0}\subseteq\Beta$
such that $U=Q_{n}^{e}(\beta)\defeq Q_{nk}^{e}(\beta,\lambda_{n})$
satisfies the corresponding part of \propref{detcvg}, w.p.a.1. Then
\[
\Prob\{\abv{\beta}_{nk}^{e}(\beta^{(0)},r)\in R_{nk}^{e}\intsect\Beta_{0}\sep\forall\beta^{(0)}\in\Beta_{0}\}\inprob0
\]
for $r\in\{\gn,\qn\}$, and also for $r=\tn$ with $S_{nk}^{e}$ in
place of $R_{nk}^{e}$; we may take $c_{n}=o_{p}(n^{-1/2})$ in the
definition $R_{nk}^{e}$. Further, $R^{e}\intsect\Beta_{0}=\{\beta_{0}\}$
under \GN{} and \QN{}, while $S^{e}\intsect\Beta_{0}=\{\beta_{0}\}$
under \TR{}. Thus, when $r\in\{\gn,\qn\}$ we have w.p.a.1, 
\[
\sup_{\beta^{(0)}\in\Beta_{0}}d(\abv{\beta}_{nk}^{e}(\beta^{(0)},r),\beta_{0})\leq d_{L}(R_{nk}^{e}\intsect\Beta_{0},\{\beta_{0}\})=o_{p}(n^{-1/2})
\]
with the final estimate following by \thmref{rootdist}. When $r=\tn$,
the preceding holds with $S_{nk}^{e}$ in place of $R_{nk}^{e}$,
in this case via \thmref{root2nd}.

It thus remains to verify that the requirements of \propref{detcvg}
hold w.p.a.1. When $r=\gn$, it follows from \propref{derivcvg}\enuref{Jacobian},
the continuity of $\sigma_{\min}(\cdot)$ and \GN{} that 
\[
0<\inf_{\beta\in\Beta_{0}}\sigma_{\min}[G(\beta)]=\inf_{\beta\in\Beta_{0}}\sigma_{\min}[G_{n}(\beta)]+o_{p}(1),
\]
whence $\inf_{\beta\in\Beta_{0}}\sigma_{\min}[G_{n}(\beta)]>0$ w.p.a.1.
When $r=\qn$, it follows from \propref{derivcvg}\enuref{Qderiv}
and \QN{} that 
\[
0<\inf_{\beta\in\Beta_{0}}\varrho_{\min}[\partial_{\beta}^{2}Q^{e}(\beta)]=\inf_{\beta\in\Beta_{0}}\varrho_{\min}[\partial_{\beta}^{2}Q_{n}^{e}(\beta)]+o_{p}(1)
\]
whence $Q_{n}^{e}$ is strictly convex on $\Beta_{0}$ w.p.a.1. When
$r=\tn$, there are no additional conditions to verify.
\end{proof}

\subsection{Proofs of Propositions~\ref{prop:jackbind}--\ref{prop:detcvg}}
\begin{proof}[Proof of \propref{jackbind}]
 Part~\enuref{smplbind} follows by \enuref{H:stocheq} and the
continuous mapping theorem. Part~\enuref{nobias} is immediate from
\eqref{kextrap}. For part~\enuref{stocdiff}, we note that for $\beta_{n}=\beta_{0}+n^{1/2}\delta_{n}$
with $\delta_{n}=o_{p}(n^{1/2})$ as above,
\begin{multline*}
\Delta_{n}^{k}(\beta_{n})=n^{1/2}[\abv{\theta}_{n}^{k}(\beta_{n},\lambda_{n})-\bind^{k}(\beta_{n},\lambda_{n})]\\
-n^{1/2}[\abv{\theta}_{n}^{k}(\beta_{0},\lambda_{n})-\bind^{k}(\beta_{0},\lambda_{n})]+n^{1/2}[\bind^{k}(\beta_{n},\lambda_{n})-\bind^{k}(\beta_{0},\lambda_{n})].
\end{multline*}
Since $\abv{\theta}_{n}^{k}$ is a linear combination of the $\hat{\theta}_{n}^{m}$'s,
it is clear from \enuref{H:stocheq} that the first two terms converge
jointly in distribution to identical limits (since $\beta_{n}\inprob\beta_{0}$).
For the final term, continuous differentiability of $\bind^{k}$ (\enuref{R:uniqueness}
above) entails that 
\begin{align*}
n^{1/2}[\bind^{k}(\beta_{n},\lambda_{n})-\bind^{k}(\beta_{0},\lambda_{n})] & =[\partial_{\beta}\bind^{k}(\beta_{0},\lambda_{n})]^{\trans}(\beta_{n}-\beta_{0})+o_{p}(\smlnorm{\beta_{n}-\beta_{0}})\\
 & =G\delta_{n}+o_{p}(1+\smlnorm{\delta_{n}}).
\end{align*}

\end{proof}
 
\begin{proof}[Proof of \propref{auxdist}]
 Note first that 
\begin{multline*}
n^{1/2}[\abv{\theta}_{n}^{k}(\beta_{0},\lambda_{n})-\bind^{k}(\beta_{0},\lambda_{n})]=\sum_{r=0}^{k}\gamma_{rk}\cdot n^{1/2}[\abv{\theta}_{n}(\beta_{0},\delta^{r}\lambda_{n})-\bind(\beta_{0},\delta^{r}\lambda_{n})]\\
=\frac{1}{M}\sum_{m=1}^{M}\sum_{r=0}^{k}\gamma_{rk}\psi_{n}^{m}(\beta_{0},\delta^{r}\lambda_{n})\wkc\frac{1}{M}\sum_{m=1}^{M}\psi^{m}(\beta_{0},0),
\end{multline*}
by \eqref{kextrap}, \eqref{jackknifed}, \enuref{H:stocheq} and
$\sum_{r=0}^{k}\gamma_{rk}=1$. By \enuref{H:stocheq}, this holds
jointly with
\[
n^{1/2}(\hat{\theta}_{n}-\theta_{0})\wkc\psi^{0}(\beta_{0},0).
\]
Since \enuref{H:limitdist} implies that $\psi^{m}(\beta_{0},0)=H^{-1}\phi^{m}$,
the limiting variance of $Z_{n}$ is equal to
\[
\var\left[\psi^{0}(\beta_{0},0)-\frac{1}{M}\sum_{m=1}^{M}\psi^{m}(\beta_{0},0)\right]=H^{-1}\var\left[\phi^{0}-\frac{1}{M}\sum_{m=1}^{M}\phi^{m}\right]H^{-1}=H^{-1}VH^{-1}
\]
where the final equality follows from \enuref{H:limitdist} and straightforward
calculations.
\end{proof}
 
\begin{proof}[Proof of \propref{unifcvg}]
 We first prove part~\enuref{Qunif}. For the Wald estimator, this
is immediate from \propref{jackbind}\enuref{smplbind}. For the LR
estimator, it follows from \propref{jackbind}\enuref{smplbind},
\enuref{H:unifcmpct} and the continuous mapping theorem (arguing
as on pp.~144f.\ of \citealp{Billing68}), that 
\[
Q_{nk}^{\LR}(\beta)=(\L_{n}\compose\abv{\theta}_{n}^{k})(\beta,\lambda_{n})\inprob(\L\compose\bind^{k})(\beta,0)=Q^{\LR}(\beta),
\]
uniformly on $\Beta$.

For part~\enuref{Qwellsep}, we note that $\beta\elmap\bind^{k}(\beta,0)$
is continuous by \enuref{R:uniqueness}, while the continuity of $\L$
is implied by \enuref{H:unifcmpct}, since $\L_{n}$ is continuous.
Thus $Q^{e}$ is continuous for $e\in\{\Wald,\LR\}$, and by \enuref{R:injective}
is uniquely minimized at $\beta_{0}$. Hence $\beta\elmap Q^{e}(\beta)$
has a well-separated minimum, which by \enuref{R:correct} is interior
to $\Beta$.
\end{proof}
 
\begin{proof}[Proof of \propref{derivcvg}]
 Part~\enuref{bindderiv} is immediate from \enuref{H:deriv}, \eqref{jackknifed}
and the continuous mapping theorem; it further implies part~\enuref{Jacobian}.
For part~\enuref{Qderiv}, recall $\dot{Q}_{n}^{e}(\beta)=\partial_{\beta}Q_{n}^{e}(\beta)$,
and $G_{n}(\beta)=[\partial_{\beta}\abv{\theta}_{n}^{k}(\beta)]^{\trans}$.
Then we have
\begin{align*}
\dot{Q}_{n}^{\Wald}(\beta) & =G_{n}(\beta)^{\trans}W_{n}[\abv{\theta}_{n}(\beta)-\hat{\theta}_{n}] & \dot{Q}_{n}^{\LR}(\beta) & =G_{n}(\beta)^{\trans}\dot{\Like}_{n}[\abv{\theta}_{n}^{k}(\beta)].
\end{align*}
Part~\enuref{Jacobian}, and similar arguments as were used are used
in the proof of part~\enuref{Qunif} of \propref{unifcvg}, yield
that $\dot{Q}_{n}^{e}(\beta)\inprob\partial_{\beta}Q^{e}(\beta,0)\eqdef\dot{Q}^{e}(\beta)$
uniformly on $\Beta$. The proof that the second derivatives converge
uniformly is analogous.
\end{proof}
 
\begin{proof}[Proof of \propref{detcvg}]
 For $r=\gn$, the result follows by Theorem~10.1 in \citet{NW06};
for $r=\qn$, by their Theorem~6.5; and for $r=\tn$, by Theorem~4.13
in \citet{MS83SIAM}.
\end{proof}

\section{Sufficiency of the low-level assumptions\label{app:sufficiency}}

We shall henceforth maintain both Assumptions~\ref{ass:lowlevel}
and \ref{ass:regularity}, and address the question of whether these
are sufficient for \assref{highlevel}; that is, we shall prove \propref{sufficiency}.
The main steps leading to the proof are displayed in \figref{propproof}.

\begin{figure}
\noindent \begin{centering}
\includegraphics[bb=50bp 240bp 546bp 330bp,clip,scale=0.8]{figures}
\par\end{centering}

\protect\caption{Proof of \propref{sufficiency}}

\label{fig:propproof}
\end{figure}

Recall that, as per \enuref{L:auxiliary}, the auxiliary model is
the Gaussian SUR displayed in \eqref{reggeneral} above. For simplicity,
we shall consider only the case where $\Sigma_{\xi}$ is unrestricted,
but our arguments extend straightforwardly to the case where $\Sigma_{\xi}$
is block diagonal (as would typically be imposed when $T>1$). Recall
that $\theta$ collects the elements of $\alpha$ and $\Sigma_{\xi}^{-1}$.
Fix an $m\in\{0,1,\ldots M\}$, and define
\[
\xi_{ri}(\alpha)\defeq y_{r}(z_{i};\beta,\lambda)-\alpha_{xr}^{\trans}\Pi_{xr}x(z_{i})-\alpha_{yr}^{\trans}\Pi_{yr}y(z_{i};\beta,\lambda),
\]
temporarily suppressing the dependence of $y$ (and hence $\xi_{ri}$)
on $m$. Collecting $\xi_{i}\defeq(\xi_{1i},\ldots,\xi_{d_{y}i})^{\trans}$,
the average log-likelihood of the auxiliary model can be written as
\begin{align*}
\Like_{n}(y,x;\theta) & =\frac{1}{n}\sum_{i=1}^{n}\like(y_{i},x_{i};\theta)=-\frac{1}{2}\log2\pi-\frac{1}{2}\log\det\Sigma_{\xi}-\frac{1}{2}\tr\left[\Sigma_{\xi}^{-1}\frac{1}{n}\sum_{i=1}^{n}\xi_{i}(\alpha)\xi_{i}(\alpha)^{\trans}\right].
\end{align*}
Deduce that there are functions $L$ and $l$, which are three times
continuously differentiable in both arguments (at least on $\intr\Theta$),
such that 
\begin{align}
\L_{n}(y,x;\theta) & =L(T_{n};\theta) & \like(y_{i},x_{i};\theta) & =l(t_{i};\theta)\label{eq:suffstatlike}
\end{align}
where 
\[
t_{i}^{m}(\beta,\lambda)=\begin{bmatrix}y(z_{i}^{m};\beta,\lambda)\\
x(z_{i}^{m})
\end{bmatrix}\begin{bmatrix}y(z_{i}^{m};\beta,\lambda)^{\trans} & x(z_{i}^{m})^{\trans}\end{bmatrix}
\]
and $T_{n}^{m}\defeq\vekh(\mathcal{T}_{n}^{m})$, for 
\begin{equation}
\mathcal{T}_{n}^{m}(\beta,\lambda)\defeq\frac{1}{n}\sum_{i=1}^{n}t_{i}^{m}(\beta,\lambda)t_{i}^{m}(\beta,\lambda)^{\trans}.\label{eq:BigT}
\end{equation}

Further, direct calculation gives
\begin{align}
\partial_{\alpha_{xr}}\like_{i}(\theta) & =\sum_{s=1}^{d_{y}}\sigma^{rs}\xi_{si}(\alpha)\Pi_{xr}x(z_{i}) & \partial_{\alpha_{yr}}\like_{i}(\theta) & =\sum_{s=1}^{d_{y}}\sigma^{rs}\xi_{si}(\alpha)\Pi_{yr}y(z_{i};\beta,\lambda)\label{eq:score1}
\end{align}
and
\begin{equation}
\partial_{\sigma^{rs}}\like_{i}(\theta)=\frac{1}{2}\sigma_{rs}-\frac{1}{2}\xi_{ri}(\alpha)\xi_{si}(\alpha).\label{eq:score2}
\end{equation}
Since the elements of the score vector $\dot{\like}_{i}(\theta)=\partial_{\theta}\like_{i}(\theta)$
necessarily take one of the forms displayed in \eqref{score1} or
\eqref{score2}, we may conclude that, for any compact subset $A\subset\Theta$,
there exists a $C_{A}$ such that
\begin{equation}
\expect\sup_{\theta\in A}\smlnorm{\dot{\like}_{i}(\theta)}^{2}\leq C_{A}\expect\smlnorm{z_{i}}^{4}<\infty\label{eq:finitescore}
\end{equation}
with the second inequality following from \enuref{L:moment}.

Regarding the maximum likelihood estimator (MLE), we note that the
concentrated average log-likelihood is given by
\[
\Like_{n}(y,x;\alpha)=-\frac{d_{y}}{2}(\log2\pi+1)-\frac{1}{2}\log\det\left[\frac{1}{n}\sum_{i=1}^{n}\xi_{i}(\alpha)\xi_{i}(\alpha)^{\trans}\right]=L_{c}(T_{n};\alpha)
\]
which is three times continuously differentiable in $\alpha$ and
$T_{n}$, so long as $\mathcal{T}_{n}$ is non-singular. By the implicit
function theorem, it follows that $\hat{\alpha}_{n}$ may be regarded
as a smooth function of $T_{n}$. Noting the usual formula for the
ML estimates of $\Sigma_{\xi}$, this holds also for the components
of $\theta$ referring to $\Sigma_{\xi}^{-1}$, whence
\begin{equation}
\hat{\theta}_{n}^{m}(\beta,\lambda)=h[T_{n}^{m}(\beta,\lambda)]\label{eq:suffstatest}
\end{equation}
for some $h$ that is twice continuously differentiable on the set
where $\mathcal{T}_{n}^{m}$ has full rank. Under \enuref{L:nonsingular},
this occurs uniformly on $\Beta\times\Lambda$ w.p.a.1., and so to
avoid tiresome circumlocution, we shall simply treat $h$ as if it
were everywhere twice continuously differentiable throughout the sequel.
Letting $T(\beta,\lambda)\defeq\expect T_{n}^{0}(\beta,\lambda)$,
we note that the population binding function is given by
\begin{equation}
\bind(\beta,\lambda)=h[T(\beta,\lambda)].\label{eq:bind}
\end{equation}

Define $\varphi_{n}^{m}(\beta,\lambda)\defeq n^{1/2}[T_{n}^{m}(\beta,\lambda)-T(\beta,\lambda)]$,
and let $[\varphi^{m}(\beta,\lambda)]_{m=0}^{M}$ denote a vector-valued
continuous Gaussian process on $\Beta\times\Lambda$ with covariance
kernel
\[
\cov(\varphi^{m_{1}}(\beta_{1},\lambda_{1}),\varphi^{m_{2}}(\beta_{2},\lambda_{2}))=\cov(T_{n}^{m_{1}}(\beta_{1},\lambda_{1}),T_{n}^{m_{2}}(\beta_{2},\lambda_{2})).
\]
Note that \enuref{L:moment}, in particular the requirement that $\expect\smlnorm{z_{i}}^{4}<\infty$,
ensures that this covariance exists and is finite.
\begin{lem}
\label{lem:Tlimits}~
\begin{enumerate}
\item \label{enu:FCLT} $\varphi_{n}^{m}(\beta,\lambda)\wkc\varphi^{m}(\beta,\lambda)$
in $\ell^{\infty}(\Beta\times\Lambda)$, jointly for $m\in\{0,\ldots,M\}$;
and
\item \label{enu:derivULLN}if \eqref{lambdacond} holds for $l^{\prime}=l\in\{1,2\}$,
then
\begin{equation}
\sup_{\beta\in\Beta}\smlnorm{\partial_{\beta}^{l}T_{n}^{m}(\beta,\lambda_{n})-\partial_{\beta}^{l}T(\beta,0)}=o_{p}(1)\label{eq:levderiv}
\end{equation}

\end{enumerate}
\end{lem}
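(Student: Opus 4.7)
The plan is to reduce both parts to empirical-process properties of the matrix-valued class
\[
\class{F}^{m}\defeq\{z\elmap t(z;\beta,\lambda)t(z;\beta,\lambda)^{\trans}:(\beta,\lambda)\in\Beta\times\Lambda\},
\]
with $t(z;\beta,\lambda)\defeq[y(z;\beta,\lambda)^{\trans},x(z)^{\trans}]^{\trans}$. Since $T_{n}^{m}-T=(\mathbb{P}_{n}-P)[t^{m}(t^{m})^{\trans}]$, part~\enuref{FCLT} is an FCLT on $\class{F}^{m}$, and part~\enuref{derivULLN} is a uniform law of large numbers for the analogous class of $l$th-order $\beta$-derivatives.

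For part~\enuref{FCLT}, I would show that $\class{F}^{m}$ is $P$-Donsker by handling the two building blocks of $t$ separately. The $x$-components contribute functions constant in $(\beta,\lambda)$ and pose no difficulty under \enuref{L:moment}. Each entry of $y(z;\beta,\lambda)=D\tilde{y}(\beta,\lambda)$ is, by \enuref{L:linear}--\enuref{L:gamma}, a finite linear combination of terms $\widx_{r}(z;\beta)\prod_{s\in\set{S}_{r}}K_{\lambda}[\vidx_{s}(z;\beta)]$, where $\widx_{r}$ is linear in $z$ with $C^{2}$ dependence on $\beta$ via $\gamma$, and each kernel factor is a uniformly bounded monotone function of the linear index $z^{\trans}\Pi_{\vidx s}\gamma(\beta)$. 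The $\widx_{r}$-class is Donsker with envelope $W_{r}$ (square-integrable by \enuref{L:moment}), while each kernel class lies in a uniformly bounded VC-subgraph class as $(\beta,\lambda)$ varies, because graphs of monotone functions of a one-dimensional linear projection form a finite-dimensional family. Standard closure of Donsker classes under products of a bounded VC class with an $L^{2}$-Donsker class (see \citet{VVW96}, Section~2.10) yields that $\class{F}^{m}$ is Donsker, with $L^{2}$-envelope dominated by products $W_{r}W_{r'}$; \enuref{L:moment} ensures $\expect(W_{r}W_{r'})^{2}<\infty$ by Cauchy--Schwarz. Joint convergence over $m\in\{0,\ldots,M\}$ is automatic because the tuples $(t_{i}^{0},\ldots,t_{i}^{M})$ are i.i.d.\ in $i$, so the $(M{+}1)$-fold vector class is Donsker by the same arguments, with the stated covariance kernel.

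Part~\enuref{derivULLN} is the main obstacle, since differentiation of the kernel factors introduces $\lambda^{-l}$ blowups. I would split
\[
\sup_{\beta\in\Beta}\smlnorm{\partial_{\beta}^{l}T_{n}^{m}(\beta,\lambda_{n})-\partial_{\beta}^{l}T(\beta,\lambda_{n})}+\sup_{\beta\in\Beta}\smlnorm{\partial_{\beta}^{l}T(\beta,\lambda_{n})-\partial_{\beta}^{l}T(\beta,0)}.
\]
The deterministic term is handled by differentiating under the integral -- justified by \enuref{L:density}, \enuref{L:marginals}, \enuref{L:moment} and the smoothness of $K$ via \enuref{R:kernel} -- and then sending $\lambda_{n}\goesto0$, with uniformity on the compact set $\Beta$ following from equicontinuity. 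For the stochastic term, Fa\`a~di~Bruno applied to each $\widx_{r}\prod K_{\lambda}[\vidx_{s}]$ expresses $\partial_{\beta}^{l}T_{n}^{m}(\beta,\lambda_{n})$ as a finite sum of empirical averages of functions of the form
\[
\lambda_{n}^{-j}\,K^{(j)}[\vidx_{s}(z;\beta)/\lambda_{n}]\cdot p_{j}(z;\beta),\qquad 0\le j\le l,
\]
where $p_{j}$ gathers $\widx$-factors, residual kernel factors, and $\beta$-derivatives of $\gamma$, all dominated by polynomials in $\smlnorm{z}$ and the $W_{r}$'s.

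The delicate ingredient is that the natural $L^{2}$-envelope of each such class has size only $O(\lambda_{n}^{1/2-l})$, rather than the pointwise $O(\lambda_{n}^{-l})$: this relies on \enuref{L:marginals}, which bounds the conditional density of $\vidx_{s}$ on a neighborhood of zero, so that the localization integral $\expect[\lambda^{-2j}K^{(j)}(\vidx_{s}/\lambda)^{2}\mid\err_{\widx},x]$ is of order $\lambda^{1-2j}$. Each derivative class is VC-subgraph as in part~\enuref{FCLT}, so a standard bracketing maximal inequality (e.g.\ Theorem~2.14.1 in \citet{VVW96}) combined with the moment structure in \enuref{L:moment} bounds the stochastic term by
\[
C\,n^{-1/2+1/(2p_{0})}\,\lambda_{n}^{1/2-l}\,[\log(\lambda_{n}^{-1}\pmax n)]^{1/2},
\]
which is $o_{p}(1)$ precisely under the condition \eqref{lambdacond}. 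The full bracketing-entropy calculation is deferred to \appref{ullnproof}.
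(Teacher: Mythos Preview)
Your proposal follows essentially the same route as the paper: part~(i) via Euclidean/Donsker properties of the products $y_{r}y_{s}$ and $x_{t}y_{r}$ with square-integrable envelopes, and part~(ii) via the stochastic/deterministic split, the crucial variance bound $\expect[\smlnorm{z_{\vidx s}}^{p}\lprod^{2}\mid\filtg_{\widx,x}]\lesssim\lambda$ coming from \enuref{L:marginals}, and a uniform law of large numbers for the derivative classes. The one point that deserves care is your appeal to Theorem~2.14.1 in \citet{VVW96}: that bracketing maximal inequality does not by itself produce the $n^{-1/2+1/(2p_{0})}$ rate when the envelope has only $p_{0}$ moments. The paper instead proves a tailored uniform-in-bandwidth LLN (\propref{LLN}) by truncating the envelope at level $n^{1/p_{0}}$, partitioning the bandwidth range geometrically, and applying a Bernstein-type exponential bound slice by slice; this is what forces the precise condition \eqref{lambdacond}. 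Your deferral ``to \appref{ullnproof}'' is therefore exactly the right instinct, but the off-the-shelf bracketing inequality you name would need to be replaced by that truncation-plus-Bernstein argument.
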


By an application of the delta method, we thus have
\begin{cor}
\label{cor:thetalims} For $\dot{h}(\beta,\lambda)\defeq\partial_{\beta}h[T(\beta,\lambda)]$,
\begin{equation}
\psi_{n}^{m}(\beta,\lambda)\defeq n^{1/2}[\hat{\theta}_{n}^{m}(\beta,\lambda)-\bind(\beta,\lambda)]\wkc\dot{h}(\beta,\lambda)\varphi^{m}(\beta,\lambda)\eqdef\psi^{m}(\beta,\lambda)\label{eq:psicvg}
\end{equation}
in $\ell^{\infty}(\Beta\times\Lambda)$, jointly for $m\in\{0,\ldots,M\}$.
\end{cor}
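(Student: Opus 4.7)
The plan is to derive the corollary by applying a functional-delta-method argument to the smooth representation $\hat{\theta}_n^m(\beta,\lambda) = h[T_n^m(\beta,\lambda)]$ and $\bind(\beta,\lambda) = h[T(\beta,\lambda)]$ given in \eqref{suffstatest} and \eqref{bind}. The essential input is \lemref{Tlimits}\enuref{FCLT}, which gives joint weak convergence of the sufficient-statistic processes $\varphi_n^m$ in $\ell^\infty(\Beta\times\Lambda)$. Since $h$ is (treated as) twice continuously differentiable on a neighborhood of the image of $T$, and $\dot{h}(\beta,\lambda)$ is a continuous deterministic matrix-valued function on $\Beta\times\Lambda$, the delta method should deliver the conclusion after a uniform linearization.

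First I would write down the exact Taylor expansion
\[
h[T_n^m(\beta,\lambda)] - h[T(\beta,\lambda)] = \dot{h}(\beta,\lambda)\,[T_n^m(\beta,\lambda)-T(\beta,\lambda)] + R_n^m(\beta,\lambda),
\]
where $R_n^m$ is the second-order Lagrange remainder. Multiplying by $n^{1/2}$ gives $\psi_n^m(\beta,\lambda) = \dot{h}(\beta,\lambda)\varphi_n^m(\beta,\lambda) + n^{1/2}R_n^m(\beta,\lambda)$. The first term converges weakly in $\ell^\infty(\Beta\times\Lambda)$, jointly over $m\in\{0,\ldots,M\}$, by combining \lemref{Tlimits}\enuref{FCLT} with the continuous linear map $\varphi\mapsto \dot{h}(\cdot)\varphi(\cdot)$; this is a continuous mapping in $\ell^\infty(\Beta\times\Lambda)$ because $(\beta,\lambda)\elmap\dot{h}(\beta,\lambda)$ is bounded and uniformly continuous on the compact set $\Beta\times\Lambda$ (which follows from $h$ being $C^2$ and $T$ being continuous, given \enuref{L:nonsingular}). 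The limit process is $\dot{h}(\beta,\lambda)\varphi^m(\beta,\lambda)$, which is a mean-zero continuous Gaussian process as required.

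The main obstacle is to show that the remainder is asymptotically negligible uniformly, i.e.\ $\sup_{(\beta,\lambda)\in\Beta\times\Lambda}\bignorm{n^{1/2}R_n^m(\beta,\lambda)} = o_p(1)$. To handle this, I would first observe that \lemref{Tlimits}\enuref{FCLT} implies tightness of $\varphi_n^m$, hence $\sup_{(\beta,\lambda)}\bignorm{T_n^m(\beta,\lambda)-T(\beta,\lambda)} = O_p(n^{-1/2})$. Consequently, with probability tending to one, $T_n^m(\beta,\lambda)$ remains in an arbitrarily small compact neighborhood $\mathcal{N}$ of the (compact) image $T(\Beta\times\Lambda)$ uniformly in $(\beta,\lambda)$. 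On any such $\mathcal{N}$ the second derivatives of $h$ are bounded (by the $C^2$ assumption together with uniform non-singularity of $\mathcal{T}_n^m$ w.p.a.1., via \enuref{L:nonsingular}), so the Lagrange form of the remainder gives the pointwise bound $\bignorm{R_n^m(\beta,\lambda)} \leq C \bignorm{T_n^m(\beta,\lambda)-T(\beta,\lambda)}^2$ uniformly in $(\beta,\lambda)$. Hence $\sup_{(\beta,\lambda)} n^{1/2}\bignorm{R_n^m(\beta,\lambda)} \leq C\cdot n^{-1/2}\sup_{(\beta,\lambda)}\bignorm{\varphi_n^m(\beta,\lambda)}^2 = O_p(n^{-1/2})$, which is $o_p(1)$.

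Finally, I would combine the linear term convergence with the uniform negligibility of the remainder and appeal to Slutsky's theorem in $\ell^\infty(\Beta\times\Lambda)$ (applied jointly across $m\in\{0,\ldots,M\}$) to obtain $(\psi_n^0,\ldots,\psi_n^M)\wkc(\psi^0,\ldots,\psi^M)$ in the product space, completing the proof. The only subtlety beyond bookkeeping is the uniform control of the remainder, but this is immediate once tightness of $\varphi_n^m$ and compactness of $\Beta\times\Lambda$ are in hand.
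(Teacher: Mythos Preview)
Your argument is correct and is precisely the functional delta method that the paper invokes; the paper merely states ``by an application of the delta method'' without spelling out the uniform linearization and remainder control that you provide. Your treatment of the remainder via tightness of $\varphi_n^m$ and local boundedness of the second derivatives of $h$ (guaranteed by \enuref{L:nonsingular}) is exactly the right justification.
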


The proof of \lemref{Tlimits} appears in \appref{Tlimitproof}.
\begin{proof}[Proof of \propref{sufficiency}]
 \enuref{H:twicediff} follows from the twice continuous differentiability
of $L$ in \eqref{suffstatlike}. The first part of \enuref{H:unifcmpct}
is an immediate consequence of \lemref{Tlimits}\enuref{FCLT} and
the smoothness of $L$; the second part is implied by \eqref{finitescore}
and Lemma~2.4 in \citet{New94}. \enuref{H:stocheq} follows from
\corref{thetalims}, and immediately entails that, for $\beta_{n}=\beta_{0}+o_{p}(1)$
and $m\in\{1,\ldots,M\}$, $\psi_{n}^{m}(\beta_{n},\lambda_{n})=\psi_{n}^{m}(\beta_{0},0)+o_{p}(1)$,
where
\[
\psi_{n}^{m}(\beta_{0},0)=n^{1/2}[\hat{\theta}_{n}^{m}(\beta_{0},0)-\bind(\beta_{0},0)]=-H^{-1}\frac{1}{n^{1/2}}\sum_{i=1}^{n}\dot{\like}_{i}^{m}(\beta_{0},0;\theta_{0})+o_{p}(1)
\]
for $m\in\{0,1,\ldots,M\}$; the final equality follows from the consistency
of $\hat{\theta}_{n}$ (as implied by \corref{thetalims}) and the
arguments used to prove Theorem~3.1 in \citet{New94}. By definition,
$\phi_{n}^{m}\defeq n^{-1/2}\sum_{i=1}^{n}\dot{\like}_{i}^{m}(\beta_{0},0;\theta_{0})$
whereupon the rest of \enuref{H:limitdist} follows by the central
limit theorem, in view of \enuref{L:iid} and \eqref{finitescore}.
Finally, \enuref{H:deriv} follows from \eqref{suffstatest}, \eqref{bind},
\lemref{Tlimits}\enuref{derivULLN} and the chain rule.
\end{proof}

\section{Proof of \lemref{Tlimits}\label{app:Tlimitproof}}

For the purposes of the proofs undertaken in this section, we may
suppose without loss of generality that $\tilde{D}=I_{d_{y}}$ in
\enuref{L:linear}, $\gamma(\beta)=\beta$ in \enuref{L:gamma}, and
$\smlnorm K_{\infty}\leq1$. Recalling \eqref{ytilde} above, we have
\begin{equation}
y_{r}(\beta,\lambda)=\widx_{r}(\beta)\cdot\prod_{s\in\set S_{r}}K_{\lambda}[\vidx_{s}(\beta)]\eqdef\widx_{r}(\beta)\cdot\kprod(\set S_{r};\beta,\lambda).\label{eq:yr}
\end{equation}
Let $\dot{K}$ and $\ddot{K}$ respectively denote the first and second
derivatives of $K$. For future reference, we here note that
\begin{align}
\partial_{\beta}y_{r}(\beta,\lambda) & =z_{wr}\cdot\kprod(\set S_{r};\beta,\lambda)+\lambda^{-1}w_{r}(\beta)\sum_{s\in\set S_{r}}z_{vs}\cdot\kprod_{s}(\set S_{r};\beta,\lambda)\label{eq:grad}\\
 & \eqdef D_{r1}(\beta,\lambda)+\lambda^{-1}D_{r2}(\beta,\lambda)\nonumber 
\end{align}
where $z_{vr}\defeq\Pi_{vr}^{\trans}z$, $z_{wr}\defeq\Pi_{wr}^{\trans}z$
and $\kprod_{s}(\set S;\beta,\lambda)\defeq\dot{K}_{\lambda}[v_{s}(\beta)]\cdot\kprod(\set S\backslash\{s\};\beta,\lambda)$;
and
\begin{align}
\partial_{\beta}^{2}y_{r}(\beta,\lambda) & =\lambda^{-1}\sum_{s\in\set S_{r}}[z_{wr}z_{vs}^{\trans}+z_{vs}z_{wr}^{\trans}]\cdot\kprod_{s}(\set S_{r};\beta,\lambda)\label{eq:hess}\\
 & \qquad\qquad\qquad+\lambda^{-2}w_{r}(\beta)\sum_{s\in\set S_{r}}\sum_{t\in\set S_{r}}z_{vs}z_{vt}^{\trans}\cdot\kprod_{st}(\class S_{r};\beta,\lambda)\nonumber \\
 & \eqdef\lambda^{-1}H_{r1}(\beta,\lambda)+\lambda^{-2}H_{r2}(\beta,\lambda)\nonumber 
\end{align}
for
\[
\kprod_{st}(\class S;\beta,\lambda)\defeq\begin{cases}
\ddot{K}_{\lambda}[v_{s}(\beta)]\cdot\kprod(\set S\backslash\{s\};\beta,\lambda) & \text{if }s=t,\\
\dot{K}_{\lambda}[v_{s}(\beta)]\cdot\dot{K}_{\lambda}[v_{t}(\beta)]\cdot\kprod(\set S\backslash\{s,t\};\beta,\lambda) & \text{if }s\neq t.
\end{cases}
\]

\subsection{Proof of part~(ii)\label{app:parttwoproof}}

In view of \eqref{BigT}, the scalar elements of $T_{n}(\beta,\lambda)$
that depend on $(\beta,\lambda)$ take either of the following forms:
\begin{align}
\tau_{n1}(\beta,\lambda) & \defeq\expect_{n}[y_{r}(\beta,\lambda)y_{s}(\beta,\lambda)] & \tau_{n2}(\beta,\lambda) & \defeq\expect_{n}[y_{r}(\beta,\lambda)x_{t}]\label{eq:taus}
\end{align}
for some $r,s\in\{1,\ldots,d_{y}\}$, or $t\in\{1,\ldots,d_{x}\}$,
where $\expect_{n}f(\beta,\lambda)\defeq\frac{1}{n}\sum_{i=1}^{n}f(z_{i};\beta,\lambda)$.
(Throughout the following, all statements involving $r$, $s$ and
$t$ should be interpreted as holding for all possible values of these
indices.) For $k\in\{1,2\}$ and $l\in\{0,1,2\}$, define $\tau_{k}(\beta,\lambda)\defeq\expect\tau_{nk}(\beta,\lambda)$
-- a typical scalar element of $T(\beta,\lambda)$ -- and $\tau_{k}^{[l]}(\beta,\lambda)\defeq\expect\partial_{\beta}^{l}\tau_{nk}(\beta,\lambda)$.
Thus part~\enuref{derivULLN} of \lemref{Tlimits} will follow once
we have shown that 
\begin{equation}
\partial_{\beta}^{l}\tau_{nk}(\beta,\lambda_{n})=\tau_{k}^{[l]}(\beta,\lambda_{n})+o_{p}(1)=\partial_{\beta}^{l}\tau_{k}(\beta,0)+o_{p}(1)\label{eq:tauderiv}
\end{equation}
uniformly in $\beta\in\Beta$. The second equality in \eqref{tauderiv}
is implied by
\begin{lem}
\label{lem:targetcvg}$\tau_{k}^{[l]}(\beta,\lambda_{n})\inprob\partial_{\beta}^{l}\tau_{k}(\beta,0)$,
uniformly on $\Beta$, for $k\in\{1,2\}$ and $l\in\{0,1,2\}$.
\end{lem}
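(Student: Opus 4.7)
The randomness in the claim enters only through $\lambda_n$, so it suffices to show that the deterministic map $\lambda \mapsto \tau_k^{[l]}(\beta, \lambda)$ admits a continuous extension to $\lambda = 0$ uniformly in $\beta \in \Beta$, with value equal to $\partial_\beta^l \tau_k(\beta, 0)$. The plan is three steps: (a) verify for each $\lambda > 0$ that $\partial_\beta^l$ passes under the expectation, so that $\tau_k^{[l]}(\beta, \lambda) = \partial_\beta^l \tau_k(\beta, \lambda)$; (b) establish the pointwise limit $\partial_\beta^l \tau_k(\beta, \lambda) \to \partial_\beta^l \tau_k(\beta, 0)$ with the right-hand side well-defined; and (c) upgrade to uniform convergence over $\beta \in \Beta$.

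Step (a) is routine: the explicit formulas \eqref{grad}--\eqref{hess}, together with the chain rule and the twice continuous differentiability of $\gamma$ from \enuref{L:gamma}, show that $\partial_\beta^l y_r(\beta, \lambda)$ is bounded by a $\lambda$-dependent constant times a polynomial of degree at most two in the components of $z$; this is integrable uniformly in $\beta \in \Beta$ by \enuref{L:moment}, and dominated convergence allows the interchange. Step (b) is the substantive one. Expanding by the chain rule, each summand appearing in $\partial_\beta^l \tau_k(\beta, \lambda)$ takes the form
\[
\lambda^{-l'} \mathbb{E}\!\left[G(z;\beta) \prod_{j=1}^{p} K^{(\alpha_j)}\!\left(\vidx_{s_j}(\beta) / \lambda\right)\right], \qquad \sum_{j=1}^{p} \alpha_j = l',
\]
where $G(\cdot; \beta)$ is polynomial in $z$, dominated by an $L^2$-envelope arising from $W_r \smlnorm{z}$ and hence integrable by \enuref{L:moment}, and $p \leq \lvert \mathcal{S}_r \rvert + \lvert \mathcal{S}_s \rvert$. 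Conditioning on the ``smooth'' randomness $(\err_\widx, x)$, Assumptions \enuref{L:density} and \enuref{L:marginals} jointly imply that the conditional density of $(\vidx_{s_1}(\beta), \ldots, \vidx_{s_p}(\beta))$ given $(\err_\widx, x)$ is bounded and varies continuously in $\beta$, uniformly on $\Beta$. The summand thus reduces to a convolution of this conditional density with a product of bandwidth-$\lambda$ kernel derivatives, and standard approximate-identity arguments (using the integer-moment assumption on $K$ from \enuref{R:kernel}) yield a well-defined limit as $\lambda \downarrow 0$, supported on the manifold $\{\vidx_{s_j}(\beta) = 0\}_{j=1}^{p}$.

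These boundary values match, by direct inspection, the derivatives at $\lambda = 0$ of the unsmoothed conditional expectation
\[
\widx_r(\beta) \widx_s(\beta) \cdot \Pr\!\left(\vidx_t(\beta) \geq 0 \ \forall\, t \in \mathcal{S}_r \cup \mathcal{S}_s \ \bigl|\ \err_\widx, x \right),
\]
which is itself $(l{+}1)$-times continuously differentiable in $\beta$ under the same conditional-density bounds (and an analogous expression handles $\tau_2$, with $x_t$ in place of one $y$ factor). Unconditional convergence then follows from another application of dominated convergence, with the envelope provided by \enuref{L:moment} (via the bound on $\expect\smlabs{W_r^2\smlnorm{z_i}^2}^{p_0}$); uniformity in $\beta$ follows from the joint equicontinuity of these convolutions on the compact set $\Beta \times [0, \epsilon]$, guaranteed by the continuity in $\beta$ of the conditional densities and of the polynomial factors $G$. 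The main obstacle is the bookkeeping for the multivariate approximate-identity argument when $p \geq 2$ indices are smoothed simultaneously: one must verify that each $\lambda^{-l'}$ prefactor is exactly absorbed by the Jacobian of the rescaling $v \mapsto v/\lambda$ in the $p$ smoothed coordinates, and that the resulting boundary limits combine consistently across the various chain-rule terms to reproduce the derivatives of the unsmoothed target. This is delicate but reduces to a standard multivariate approximate-identity computation, enabled by the uniform density bounds of \enuref{L:density} and the non-degeneracy of \enuref{L:marginals}.
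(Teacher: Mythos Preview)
Your proof is substantially more ambitious than the paper's. The paper's own argument is very brief: it carries out exactly your step~(a)---bounding $\smlnorm{\partial_\beta^2(y_r y_s)}\lesssim\lambda^{-2}W_rW_s(\smlnorm z^2\pmax 1)$ and invoking the dominated-derivatives theorem to obtain $\tau_k^{[l]}(\beta,\lambda)=\partial_\beta^l\tau_k(\beta,\lambda)$ for each fixed $\lambda>0$---and then simply \emph{asserts} ``by the uniform continuity of $\partial_\beta^2\tau_1$ on $\Beta\times\Lambda$'' to pass to the limit $\lambda_n\to 0$. No approximate-identity computation is performed; the continuity at $\lambda=0$ is treated as a known property of the population object, consistent with the smoothness already built into \enuref{R:uniqueness} and the density assumptions \enuref{L:density}--\enuref{L:marginals}.

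Your steps (b)--(c) attempt to \emph{prove} what the paper merely asserts, which is commendable, but there is a gap in the key multivariate step. You claim that \enuref{L:density} and \enuref{L:marginals} jointly imply that the \emph{joint} conditional density of $(\vidx_{s_1}(\beta),\ldots,\vidx_{s_p}(\beta))$ given $(\err_\widx,x)$ exists and is bounded. The assumptions do not give this: \enuref{L:marginals} only controls \emph{marginal} conditional variances, and two distinct linear indices $\vidx_{s_j},\vidx_{s_{j'}}$ could in principle be conditionally linearly dependent, in which case no joint density exists and your Jacobian-absorption argument (``each $\lambda^{-l'}$ prefactor is exactly absorbed by the rescaling in the $p$ smoothed coordinates'') breaks down. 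Indeed, the paper's own second-moment bound (\lemref{2ndmom}) is careful to integrate out only a \emph{single} $\err_\vidx$-direction and extract only one factor of $\lambda$, precisely because joint nondegeneracy is not assumed. For the first-moment limits you need, the situation is more delicate: a cross-term like $\lambda^{-2}\expect[\dot K_\lambda(\vidx_s)\dot K_\lambda(\vidx_t)G]$ with $s\neq t$ requires two independent directions of integration to converge, and this is not guaranteed by the stated hypotheses. So either you must add a joint-nondegeneracy condition, or---as the paper implicitly does---absorb the needed regularity into the assumed uniform continuity of $\partial_\beta^l\tau_k$ on $\Beta\times\Lambda$.
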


The proof appears at the end of this section. We turn next to the
first equality in \eqref{tauderiv}. We require the following definitions.
A function $F:\set Z\elmap\reals$ is an \emph{envelope} for the class
$\class F$ if $\sup_{f\in\class F}\smlabs{f(z)}\leq F(z)$. For a
probability measure $\Qrob$ and a $p\in(1,\infty)$, let $\smlnorm f_{p,\Qrob}\defeq(\expect_{\Qrob}\smlabs{f(z_{i})}^{p})^{1/p}$.
$\class F$ is \emph{Euclidean} for the envelope $F$ if
\[
\sup_{\Qrob}N(\epsilon\smlnorm F_{1,\Qrob},\class F,L_{1,\Qrob})\leq C_{1}\epsilon^{-C_{2}}
\]
for some $C_{1}$ and $C_{2}$ (depending on $\class F$), where $N(\epsilon,\class F,L_{1,\Qrob})$
denotes the minimum number of $L_{1,\Qrob}$-balls of diameter $\epsilon$
needed to cover $\class F$. For a parametrized family of functions
$g(\beta,\lambda)=g(z;\beta,\lambda):\spc Z\elmap\reals^{d_{1}\times d_{2}}$,
let $\class F(g)\defeq\{g(\beta,\lambda)\mid(\beta,\lambda)\in\Beta\times\Lambda\}$.
Since $\Beta$ is compact, we may suppose without loss of generality
that $\Beta\subseteq\{\beta\in\reals^{d_{\beta}}\mid\smlnorm{\beta}\leq1\}$,
whence recalling \eqref{vwidx} and \eqref{widx2} above, 
\[
\smlabs{w_{r}(z;\beta)}\leq W_{r}\leq\begin{cases}
\smlnorm z & \text{if }r\in\{1,\ldots d_{w}\}\\
1 & \text{if }r\in\{d_{w}+1,\ldots d_{y}\}.
\end{cases}
\]
Thus by Lemma~22 in \citet{NP87AS}
\begin{enumerate}[{label=\textnormal{\smaller[0.76]{\Alph{section}\arabic*}}}]
\item for $\lprod\in\{\kprod,\kprod_{s},\kprod_{st}\}$, $s,t\in\{1,\ldots,d_{y}\}$
and $\class S\subseteq\{1,\ldots,d_{v}\}$, the class
\[
\class F(\lprod,\set S)\defeq\{\lprod(\set S;\beta,\lambda)\mid(\beta,\lambda)\in\Beta\times\Lambda\}
\]
is Euclidean with constant envelope; and
\item for $r\in\{1,\ldots,d_{y}\}$, $\class F(w_{r})$ is Euclidean for
$W_{r}$.
\end{enumerate}
It therefore follows by a slight adaptation of the proof of Theorem~9.15
in \citet{Kos08book} that
\begin{enumerate}[resume, resume*]
\item \label{enu:eclFy}$\class F(y_{r})$ is Euclidean for $W_{r}$;
\item \label{enu:eclDy}$\class F(y_{r}D_{s1})$ and $\class F(y_{r}D_{s2})$
are Euclidean for $W_{r}W_{s}\smlnorm z$
\item \label{enu:eclDx}$\class F(x_{t}D_{s1})$ and $\class F(x_{t}D_{s2})$
are Euclidean for $W_{s}\smlnorm z^{2}$;
\item \label{enu:eclDD}$\class F(D_{s1}D_{r1}^{\trans})$, $\class F(D_{s1}D_{r2}^{\trans})$,
$\class F(D_{s2}D_{r1}^{\trans})$ and $\class F(D_{s2}D_{r2}^{\trans})$
are Euclidean for $W_{r}W_{s}\smlnorm z^{2}$;
\item \label{enu:eclHy}$\class F(y_{s}H_{r1})$ and $\class F(y_{s}H_{r2})$
are Euclidean for $W_{r}W_{s}\smlnorm z^{2}$; and
\item \label{enu:eclHx}$\class F(x_{t}H_{r1})$ and $\class F(x_{t}H_{r2})$
are Euclidean for $W_{s}\smlnorm z^{3}$.
\end{enumerate}
Let $\mu_{n}f\defeq\frac{1}{n}\sum_{i=1}^{n}[f(z_{i})-\expect f(z_{i})]$.
Using the preceding facts, and the uniform law of large numbers given
as \propref{LLN} below, we may prove
\begin{lem}
\label{lem:derivLLN} The convergence
\begin{equation}
\sup_{\beta\in\Beta}\mu_{n}\smlabs{\partial_{\beta}^{l}[y_{s}(\beta,\lambda_{n})y_{r}(\beta,\lambda_{n})]}+\sup_{\beta\in\Beta}\mu_{n}\smlabs{x_{t}\partial_{\beta}^{l}y_{r}(\beta,\lambda_{n})}=o_{p}(1).\label{eq:derivLLN}
\end{equation}
holds for $l=0$, and also for $l\in\{1,2\}$ if \eqref{lambdacond}
holds with $l^{\prime}=l$.
\end{lem}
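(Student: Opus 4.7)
The plan is to decompose each derivative in \eqref{derivLLN} into a sum of terms, each of which is an element of an Euclidean class carrying an explicit $\lambda_n^{-j}$ prefactor with $j\in\{0,\ldots,l\}$, and then to invoke Proposition D.1 termwise. For $l=0$, the functions $y_s(\beta,\lambda_n)y_r(\beta,\lambda_n)$ and $x_t y_r(\beta,\lambda_n)$ lie in products of the Euclidean classes listed in \enuref{eclFy}, so by standard product-stability they belong to Euclidean classes with envelopes $W_r W_s$ and $|x_t|W_r$ respectively, both in $L^2$ by \enuref{L:moment}. A version of Proposition D.1 that does not use the smoothing condition (indeed, a classical Glivenko--Cantelli result suffices) then yields $o_p(1)$.

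For $l\in\{1,2\}$, applying the Leibniz rule and substituting the formulas \eqref{grad}--\eqref{hess} yields expansions
\[
\partial_\beta^l[y_s(\beta,\lambda)y_r(\beta,\lambda)] = \sum_{j=0}^{l}\lambda^{-j}G_{rsj}^l(\beta,\lambda),\qquad x_t\partial_\beta^l y_r(\beta,\lambda) = \sum_{j=0}^{l}\lambda^{-j}H_{rtj}^l(\beta,\lambda),
\]
in which each summand is a product of finitely many factors drawn from $\{y_r, D_{r1}, D_{r2}, H_{r1}, H_{r2}, x_t\}$ with the $\lambda^{-j}$ factor pulled out in front. By facts \enuref{eclFy}--\enuref{eclHx} and the closure of Euclidean classes under finite products, each summand $G_{rsj}^l(\cdot;\beta,\lambda)$ and $H_{rtj}^l(\cdot;\beta,\lambda)$ belongs to an Euclidean class whose envelope is a monomial in $W_r,W_s,\|z\|,|x_t|$ that does \emph{not} depend on $\lambda$. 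A direct check using \enuref{L:moment} confirms that each such envelope lies in $L^{p_0}$: for example, the envelope $W_rW_s\|z\|^2$ that appears for the $l=2$ terms is dominated by $\tfrac{1}{2}(W_r^2\|z\|^2+W_s^2\|z\|^2)$, and $W_r^2\|z\|^2\in L^{p_0}$ by \enuref{L:moment}. Applying Proposition D.1 to the (Euclidean) class of absolute values $\{|\lambda_n^{-j}G_{rsj}^l(\cdot;\beta,\lambda_n)|:\beta\in\Beta\}$ (and analogously for the $H$ terms), under condition \eqref{lambdacond} with $l'=l$, then gives the termwise bound $o_p(1)$; summing over the finitely many terms delivers \eqref{derivLLN}.

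The main obstacle is the $j=l$ summand, where the $\lambda_n^{-l}$ prefactor is largest. The specific exponent $2l-1$ in \eqref{lambdacond} is driven by the fact that the scaled kernel derivatives appearing in $D_{r2}$ and $H_{r2}$ are localized to a bandwidth-sized neighborhood of the zero set of $v_s(\beta)$; by \enuref{L:density} and \enuref{L:marginals}, the conditional density of $v_s$ given the remaining randomness is bounded, so the second moment of the $\lambda_n^{-l}$-scaled summand grows like $\lambda_n^{-(2l-1)}$ rather than the worst-case $\lambda_n^{-2l}$. This single factor of $\lambda_n$ gained from localization is precisely what Proposition D.1 must exploit to convert a bound on the empirical process into the sharp condition \eqref{lambdacond}; once that proposition is invoked as a black box, the remaining work reduces to the mechanical assembly of finitely many applications, and checking that the envelopes produced by the Leibniz expansion match the moment hypotheses granted by \enuref{L:moment}.
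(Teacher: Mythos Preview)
Your proposal is correct and follows essentially the same route as the paper: Leibniz-expand the derivatives via \eqref{grad}--\eqref{hess}, identify each summand as belonging to a Euclidean class with a $p_0$-integrable envelope (facts \enuref{eclFy}--\enuref{eclHx}), and then apply the uniform-in-bandwidth LLN (\propref{LLN}) termwise, with the worst term $\lambda^{-l}$ dictating the rate condition \eqref{lambdacond}. One point worth making explicit: the localization argument you sketch in your final paragraph---that the second moment of the kernel-derivative terms is $O(\lambda)$ rather than $O(1)$---is not part of \propref{LLN} itself but is a \emph{hypothesis} of that proposition (condition \enuref{varbnd}), and the paper verifies it separately as \lemref{2ndmom} using exactly the conditional-density boundedness you invoke via \enuref{L:density} and \enuref{L:marginals}.
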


The first equality in \eqref{levderiv} now follows, and thus part~\enuref{derivULLN}
of \lemref{Tlimits} is proved.
\begin{proof}[Proof of \lemref{targetcvg}]
 Suppose $l=2$; the proof when $l=1$ is analogous (and is trivial
when $l=0$). Noting that
\begin{equation}
\partial_{\beta}^{2}(y_{r}y_{s})=y_{s}\partial_{\beta}^{2}y_{r}+(\partial_{\beta}y_{r})(\partial_{\beta}y_{s})^{\trans}+(\partial_{\beta}y_{s})(\partial_{\beta}y_{r})^{\trans}+y_{r}\partial_{\beta}^{2}y_{s},\label{eq:d2yrys}
\end{equation}
it follows from \eqref{grad}, \eqref{hess}, \enuref{eclDD} and
\enuref{eclHy} that for every $\lambda\in(0,1]$, 
\[
\smlnorm{\partial_{\beta}^{2}(y_{r}y_{s})}\lesssim\lambda^{-2}W_{r}W_{s}(\smlnorm z^{2}\pmax1),
\]
which does not depend on $\beta$, and is integrable by \enuref{L:moment}.
(Here $a\lesssim b$ denotes that $a\leq Cb$ for some constant $C$
not depending on $b$.) Thus by the dominated derivatives theorem,
the second equality in
\[
\tau_{1}^{[2]}(\beta,\lambda)=\expect\partial_{\beta}^{2}\tau_{n1}(\beta,\lambda)=\partial_{\beta}^{2}\expect\tau_{n1}(\beta,\lambda)=\partial_{\beta}^{2}\tau_{1}(\beta,\lambda)
\]
holds for every $\lambda\in(0,1]$; the other equalities follow from
the definitions of $\tau_{k}^{[l]}$ and $\tau_{k}$. Deduce that,
so long as $\lambda_{n}>0$ (as per the requirements of \propref{sufficiency}
above),
\[
\tau_{1}^{[2]}(\beta,\lambda_{n})=\partial_{\beta}^{2}\tau_{1}(\beta,\lambda_{n})\inprob\partial_{\beta}^{2}\tau_{1}(\beta,0)
\]
by the uniform continuity of $\partial_{\beta}^{2}\tau_{1}$ on $\Beta\times\Lambda$.
A similar reasoning -- but now using \enuref{eclHx} -- gives the
same result for $\tau_{2}^{[2]}$.
\end{proof}

The proof of \lemref{derivLLN} requires the following result. Let
$\filtg_{\widx,x}$ denote the $\sigma$-field generated by $\err_{\widx}(z_{i})$
and $x(z_{i})$, and let $\err_{\vidx}$ denote those elements of
$\err$ that are not present in $\err_{\widx}$. Recall that $\err_{\vidx}\indep\filtg_{\widx,x}$.
\begin{lem}
\label{lem:2ndmom}For every $p\in\{0,1,2\}$, $s,t\in\{1,\ldots,d_{v}\}$,
$\class S\subseteq\{1,\ldots,d_{v}\}$ and $\lprod\in\{\kprod_{s},\kprod_{st}\}$
\begin{equation}
\expect[\smlnorm{z_{\vidx s}}^{p}\smlnorm{z_{\vidx t}}^{p}\lprod(\set S;\beta,\lambda)^{2}\mid\filtg_{\widx,x}]\lesssim\lambda\expect[\smlnorm{z_{\vidx s}}^{p}\smlnorm{z_{\vidx t}}^{p}\mid\filtg_{\widx,x}].\label{eq:condexpbound}
\end{equation}
\end{lem}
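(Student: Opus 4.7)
The plan is to reduce to a clean core estimate and then handle the polynomial weights by case analysis. Since each factor $K_\lambda[\vidx_r]$ is a cdf bounded by $1$, the factor $\kprod(\set S \setminus \{s\})^2 \leq 1$ may be discarded from both $\kprod_s^2$ and $\kprod_{st}^2$. For $\lprod = \kprod_{st}$ with $s \neq t$, bounding $\dot K_\lambda(\vidx_t)^2 \leq \smlnorm{\dot K}_\infty^2$ (\enuref{R:kernel}) reduces matters to the $\kprod_s$-case up to a constant; the case $s = t$ gives $\ddot K_\lambda(\vidx_s)^2$ and is handled identically, using boundedness of $\ddot K$ and integrability of $\smlabs{w}^q \ddot K(w)^2$ for all $q \geq 0$, both from \enuref{R:kernel}. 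What remains is the core estimate
\[
\expect\bigl[\smlnorm{z_{\vidx s}}^p \smlnorm{z_{\vidx t}}^p \dot K_\lambda(\vidx_s)^2 \mid \filtg_{\widx,x}\bigr] \lesssim \lambda\, \expect\bigl[\smlnorm{z_{\vidx s}}^p \smlnorm{z_{\vidx t}}^p \mid \filtg_{\widx,x}\bigr].
\]

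The engine is a conditional change of variables. By \enuref{L:marginals}, the independence statement in \enuref{L:iid}, and the unit variances in \enuref{L:density}, on $\filtg_{\widx,x}$ we may write $\vidx_s(\beta) = c_s + b_s^{\trans}\err'$ where $\err'$ collects the $\err$-coordinates that are not measurable with respect to $\filtg_{\widx,x}$ and $\smlnorm{b_s}^2 \geq \epsilon$. Select an index $k^* = k^*(\beta,\filtg_{\widx,x})$ with $\smlabs{b_{s,k^*}} \geq (\epsilon/d_\err)^{1/2}$ and set $\filtg^* \defeq \sigma(\filtg_{\widx,x}, \{\err_k\}_{k \neq k^*})$; conditional on $\filtg^*$, $\vidx_s = d + b_{s,k^*}\err_{k^*}$ for some $\filtg^*$-measurable $d$. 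The substitution $w = (d + b_{s,k^*} u)/\lambda$, combined with $\smlnorm{f_{k^*}}_\infty < \infty$ (from \enuref{L:density}) and integrability of $\smlabs{w}^q \dot K(w)^2$ (from \enuref{R:kernel}), yields
\[
\expect\bigl[\smlabs{\err_{k^*}}^q \dot K_\lambda(\vidx_s)^2 \mid \filtg^*\bigr] \leq C\lambda(1 + \smlabs{d}^q)
\]
for every integer $q \geq 0$. The weights are then incorporated by using $\smlnorm{\cdot}^p \leq C \sum_i \smlabs{\cdot_i}^p$ (valid for $p \in \{0,1,2\}$) to reduce to single-component products $\smlabs{z_{\vidx s,i}}^p \smlabs{z_{\vidx t,j}}^p$; each component of $z_{\vidx s}$, $z_{\vidx t}$ is either $\filtg_{\widx,x}$-measurable (an $x$- or $\err_{\widx}$-entry) or one of the $\err$-coordinates, so components of the first type and any $\err_k$ with $k \neq k^*$ are $\filtg^*$-measurable and factor out of the inner expectation (leaving only $\expect[\dot K_\lambda^2 \mid \filtg^*] \leq C\lambda$), while a component equal to $\err_{k^*}$ is handled by the displayed estimate with $q \in \{p, 2p\} \subseteq \{0,1,2,4\}$.

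The main obstacle will be the final bookkeeping step: the upper bound obtained above is $\lambda$ times a polynomial in $\smlabs{d}$ and the other $\err_k$-coordinates, and one must show — by a case analysis on which $\err$-indices appear in $z_{\vidx s}$, in $z_{\vidx t}$, or in both — that each such term is absorbed by a corresponding factor on the right-hand side. This uses pointwise lower bounds of the type $\smlnorm{z_{\vidx s}}^p \geq \smlabs{z_{\vidx s,i}}^p$, the strictly positive unconditional moments of the $\err$-coordinates (from \enuref{L:density}), and the compactness of $\Beta$, which bounds the coefficients $b_{s,k}$ so that $\smlabs{d} \lesssim \smlabs{c_s} + \sum_{k \neq k^*}\smlabs{\err_k}$ with $\smlabs{c_s}$ controlled by the $\filtg_{\widx,x}$-measurable entries of $z_{\vidx s}$; the moment hypothesis $\expect\smlnorm{z_i}^4 < \infty$ from \enuref{L:moment} is exactly what is needed for $p \leq 2$ to ensure the polynomial remainders are integrable and the inequality closes.
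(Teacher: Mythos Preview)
Your overall architecture --- isolate one $\err$-coordinate $\err_{k^\ast}$ with a coefficient bounded away from zero, condition on everything else, and extract the factor $\lambda$ by a one-dimensional change of variables --- is exactly what the paper does. The gap is in the change-of-variables step. You use only $\smlnorm{f_{k^\ast}}_\infty<\infty$, which after the substitution $w=(d+b_{s,k^\ast}u)/\lambda$ leaves the factor $\smlabs u^{q}$ inside the integral and forces the bound
\[
\expect\bigl[\smlabs{\err_{k^\ast}}^{q}\dot K_\lambda(\vidx_s)^2\mid\filtg^\ast\bigr]\lesssim\lambda\bigl(1+\smlabs d^{q}\bigr).
\]
That extra $\smlabs d^{q}$ cannot, in general, be absorbed into the right-hand side of \eqref{condexpbound}. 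Take $p=2$, let $z_{\vidx s}$ have $\filtg_{\widx,x}$-measurable part $x_1$ together with $\err_1,\err_{k^\ast}$, and let $z_{\vidx t}$ consist of a $\filtg_{\widx,x}$-measurable $x_2$ and $\err_{k^\ast}$. Then the term $\smlabs{\err_{k^\ast}}^{4}$ in the expansion of $\smlnorm{z_{\vidx s}}^{2}\smlnorm{z_{\vidx t}}^{2}$ contributes $\lambda(1+\smlabs d^{4})$ with $\smlabs d\asymp\smlabs{x_1}+\smlabs{\err_1}$, so $\expect[\smlabs d^{4}\mid\filtg_{\widx,x}]\gtrsim x_1^{4}$; yet $\expect[\smlnorm{z_{\vidx s}}^{2}\smlnorm{z_{\vidx t}}^{2}\mid\filtg_{\widx,x}]$ is of order $(x_1^{2}+1)(x_2^{2}+1)$, which cannot dominate $x_1^{4}$ when $x_2$ is bounded. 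Your proposed bookkeeping (pointwise lower bounds on $\smlnorm{z_{\vidx s}}$, positive moments of the $\err$'s, $\expect\smlnorm z^{4}<\infty$) does not rescue this, because the required inequality is a pointwise-in-$\filtg_{\widx,x}$ statement with a \emph{uniform} constant.

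The fix is to use the full strength of \enuref{L:density}: bound $\smlabs u^{q}f_{k^\ast}(u)\leq\sup_{u}\smlabs u^{q}f_{k^\ast}(u)<\infty$ \emph{before} substituting, for each $q\in\{0,\ldots,4\}$. This eliminates the $\smlabs u^{q}$ from the integrand and yields directly
\[
\expect\bigl[\smlabs{\err_{k^\ast}}^{q}\lprod(\set S;\beta,\lambda)^2\mid\filtg^\ast\bigr]\lesssim\lambda,
\]
with no dependence on $d$. The remaining step is then immediate: writing $\smlnorm{z_{\vidx s}}^{2}=\smlnorm{z_{\vidx s}^{\ast}}^{2}+\smlabs{\err_{k^\ast}}^{2}$ (and similarly for $z_{\vidx t}$), expanding, pulling out the $\filtg^\ast$-measurable factors, and then taking $\expect[\,\cdot\mid\filtg_{\widx,x}]$ gives exactly the right-hand side, because the $\smlabs{\err_{k^\ast}}^{2j}$ contributions on the left are matched by the $\smlabs{\err_{k^\ast}}^{2j}$ contributions in $\smlnorm{z_{\vidx s}}^{p}\smlnorm{z_{\vidx t}}^{p}$ on the right (using $\expect\smlabs{\err_{k^\ast}}^{2}=1$ and independence). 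This is why \enuref{L:density} is stated with the $(1+\smlabs u^{4})$ weight rather than merely requiring a bounded density.
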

\begin{proof}
Note that for any $\lprod\in\{\kprod_{s},\kprod_{st}\}$, 
\[
\lprod(\class S;\beta,\lambda)\lesssim L_{\lambda}[\nu_{s}(\beta)]
\]
where $L(x)=\max\{\smlabs{\dot{K}(x)},\smlabs{\ddot{K}(x)}\}$. Let
$d$ denote the dimensionality of $\err_{\vidx}$, and fix a $\beta\in\Beta$.
By \enuref{L:density} and \enuref{L:marginals}, there is a $k\in\{1,\ldots d\}$,
possibly depending on $\beta$, and an $\epsilon>0$ which does not,
such that
\[
\vidx_{s}(\beta)=\vidx_{s}^{\ast}(\beta)+\beta_{k}^{\ast}\err_{\vidx k}
\]
with $\smlabs{\beta_{k}^{\ast}}\geq\epsilon$ and $\vidx_{s}^{\ast}(\beta)\indep\err_{\vidx k}$.
Let $\filtg_{\widx,x}^{\ast}\defeq\filtg_{\widx,x}\pmax\sigma(\{\err_{\vidx l}\}_{l\neq k})$,
so that $\vidx_{s}^{\ast}(\beta)$ is $\filtg_{\widx,x}^{\ast}$-measurable,
and let $f_{k}$ denote the density of $\err_{\vidx k}$. Then for
any $q\in\{0,\ldots,4\}$, 
\begin{align}
\expect\left[\smlabs{\err_{\vidx k}}^{q}\lprod(\set S;\beta,\lambda)^{2}\mid\filtg_{\widx,x}^{\ast}\right] & \lesssim\expect\left[\smlabs{\err_{\vidx k}}^{q}L_{\lambda}^{2}(\nu_{s}^{\ast}(\beta)+\beta_{k}^{\ast}\err_{\vidx k})\mid\filtg_{\widx,x}^{\ast}\right]\nonumber \\
 & =\int_{\reals}\smlabs u^{q}L_{\lambda}^{2}(\vidx_{s}^{\ast}(\beta)+\beta_{k}^{\ast}u)f_{k}(u)\diff u\nonumber \\
 & \lesssim(\beta_{k}^{\ast})^{-1}\lambda\int_{\reals}L^{2}(u)\diff u\cdot\sup_{u\in\reals}\smlabs u^{q}f_{k}(u)\nonumber \\
 & \lesssim\epsilon^{-1}\lambda,\label{eq:condbound}
\end{align}
since $\sup_{u\in\reals}\smlabs u^{q}f_{k}(u)<\infty$ under \enuref{L:density}.
Finally, we may partition $z_{\vidx s}=(z_{\vidx s}^{\ast\trans},\err_{\vidx k})^{\trans}$
and $z_{\vidx t}=(z_{\vidx t}^{\ast\trans},\err_{\vidx k})^{\trans}$,
with the possibility that $z_{\vidx s}=z_{\vidx s}^{\ast}$ and $z_{\vidx t}=z_{\vidx t}^{\ast}$.
Then by \eqref{condbound},
\[
\expect\left[\smlnorm{z_{\vidx s}}^{p}\smlnorm{z_{\vidx t}}^{p}\lprod(\set S;\beta,\lambda)^{2}\mid\filtg_{\widx,x}^{\ast}\right]\lesssim\lambda\smlnorm{z_{\vidx s}^{\ast}}^{p}\smlnorm{z_{\vidx t}^{\ast}}^{p}\leq\lambda\smlnorm{z_{\vidx s}}^{p}\smlnorm{z_{\vidx t}}^{p}.
\]
The result now follows by the law of iterated expectations.
\end{proof}
 
\begin{proof}[Proof of \lemref{derivLLN}]
 We shall only provide the proof for first term on the left side
of \eqref{derivLLN}, when $l=2$; the proof in all other cases are
analogous, requiring appeal only to \propref{LLN} (or Theorem~2.4.3
in \citealp{VVW96}, when $l=0$) and the appropriate parts of \enuref{eclFy}--\enuref{eclHx}.

Recalling the decomposition of $\partial_{\beta}^{2}(y_{r}y_{s})$
given in \eqref{d2yrys} above, we are led to consider
\begin{equation}
(\partial_{\beta}y_{r})(\partial_{\beta}y_{s})^{\trans}=D_{s1}D_{r1}^{\trans}+\lambda^{-1}D_{s2}D_{r1}^{\trans}+\lambda^{-1}D_{s1}D_{r2}^{\trans}+\lambda^{-2}D_{s2}D_{r2}^{\trans}\label{eq:H:DD}
\end{equation}
and
\begin{equation}
y_{s}\partial_{\beta}^{2}y_{r}=\lambda^{-1}y_{s}H_{r1}+\lambda^{-2}y_{s}H_{r2}.\label{eq:H:yH}
\end{equation}
Note that by \lemref{2ndmom}, and \enuref{L:moment} 
\begin{align*}
\expect\smlnorm{y_{s}H_{r2}}^{2} & \lesssim\expect\left[\smlabs{\widx_{s}(\beta)}^{2}\smlabs{\widx_{r}(\beta)}^{2}\sum_{s\in\set S_{r}}\sum_{t\in\set S_{r}}\expect\left[\smlnorm{z_{vs}}^{2}\smlnorm{z_{vt}}^{2}\smlabs{\kprod_{st}(\class S_{r};\beta,\lambda)}^{2}\mid\filtg_{\widx,x}\right]\right]\\
 & \lesssim\lambda\expect\left[W_{s}^{2}W_{r}^{2}\sum_{s\in\set S_{r}}\sum_{t\in\set S_{r}}\expect\smlnorm{z_{vs}}^{2}\smlnorm{z_{vt}}^{2}\right]\\
 & \lesssim\lambda
\end{align*}
and analogously for each of $H_{r1}$, $D_{s1}D_{r1}^{\trans}$, $D_{s2}D_{r1}^{\trans}$,
$D_{s1}D_{r2}^{\trans}$ and $D_{s2}D_{r2}^{\trans}$. By \enuref{eclDD}
and \enuref{eclHy}, the classes formed from these parametrized functions
are Euclidean, with envelopes that are $p_{0}$-integrable under \enuref{L:moment}
($p_{0}\geq2$). 

Application of \propref{LLN} to each of the terms in \enuref{eclDD}
and \enuref{eclHy}, with $\lambda$ playing the role of $\delta^{-1}$
there, thus yields the result. Negligibility of the final terms in
\eqref{H:DD} and \eqref{H:yH} entail the most stringent conditions
on the rate at which $\lambda_{n}$ may shrink to zero, due to the
multiplication of these by $\lambda^{-2}$.
\end{proof}

\subsection{Proof of part~(i)}

The typical scalar elements of $T_{n}$ are as displayed in \eqref{taus}
above, i.e.\ they are averages of random functions of the form $\zeta_{1}(\beta,\lambda)\defeq y_{r}(\beta,\lambda)y_{s}(\beta,\lambda)$
or $\zeta_{2}(\beta,\lambda)\defeq x_{t}y_{r}(\beta,\lambda)$, for
$r,s\in\{1,\ldots,d_{y}\}$ and $t\in\{1,\ldots,d_{x}\}$. It follows
from \enuref{eclFy} that $\class F(\zeta_{1})$ and $\class F(\zeta_{2})$
are Euclidean, with envelopes $F_{1}\defeq W_{r}W_{s}$ and $F_{2}\defeq\smlnorm zW_{r}$
respectively. Since both envelopes are square integrable under \enuref{L:moment},
we have
\[
\sup_{\Qrob}N(\epsilon\smlnorm{F_{k}}_{2,\Qrob},\class F(\zeta_{k}),L_{2,\Qrob})\leq C_{1}^{\prime}\epsilon^{-C_{2}^{\prime}}
\]
for $k\in\{1,2\}$. Hence \eqref{psicvg} follows by Theorem~2.5.2
in \citet{VVW96}.

\section{A uniform-in-bandwidth law of large numbers\label{app:ullnproof}}

This section provides a uniform law of large numbers (ULLN) for certain
classes of parametrized functions, broad enough to cover products
involving $K_{\lambda}[\vidx_{s}(\beta)]$, and such generalizations
as appear in \lemref{2ndmom} above. Our ULLN holds \emph{uniformly}
in the inverse `bandwith' parameter $\delta=\lambda^{-1}$; in this
respect, it is related to some of the results proved in \citet{EM05AS}.
However, while their arguments could be adapted to our problem, these
would lead to stronger conditions on the bandwidth: in particular,
$p$ would have to be replaced by $2p$ in \propref{LLN} below. (On
the other hand, their results yield explicit rates of uniform convergence,
which are not of concern here.)

Consider the (pointwise measurable) function class
\[
\class F_{\Delta}\defeq\{z\elmap f_{(\gamma,\delta)}(z)\mid(\gamma,\delta)\in\Gamma\times\Delta\},
\]
and put $\class F\defeq\class F_{[1,\infty)}$. The functions $f_{(\gamma,\delta)}:\spc Z\setmap\reals^{d}$
satisfy:
\begin{enumerate}[{label=\textnormal{\smaller[0.76]{\Alph{section}\arabic*}}}]
\item \label{enu:varbnd}$\sup_{\gamma\in\Gamma}\expect\smlnorm{f_{(\gamma,\delta)}(z_{0})}^{2}\lesssim\delta^{-1}$
for every $\delta>0$.
\end{enumerate}
Let $F:\spc Z\setmap\reals$ denote an envelope for $\class F$, in
the sense that
\[
\sup_{(\gamma,\delta)\in\Gamma\times[1,\infty)}\smlnorm{f_{(\gamma,\delta)}(z)}\leq F(z)
\]
for all $z\in\spc Z$. We will suppose that $F$ may be chosen such
that, additionally,
\begin{enumerate}[resume, resume*]
\item \label{enu:Fp}$\expect\smlabs{F(z_{0})}^{p}<\infty$; and
\item \label{enu:entropy}$\sup_{\Qrob}N(\epsilon\smlnorm F_{1,\Qrob},\class F,L_{1,\Qrob})\leq C\epsilon^{-d}$
for some $d\in(0,\infty)$.
\end{enumerate}
Let $\{\abv{\delta}_{n}\}$ denote a real sequence with $\abv{\delta}_{n}\geq1$,
and $\Delta_{n}\defeq[1,\abv{\delta}_{n}]$.
\begin{prop}
\label{prop:LLN} Under \enuref{varbnd}--\enuref{entropy}, if $n^{1-1/p}/\abv{\delta}_{n}^{2m-1}\log(\abv{\delta}_{n}\pmax n)\goesto\infty$
for some $m\geq1$, then

\begin{equation}
\sup_{(\gamma,\delta)\in\Gamma\times\Delta_{n}}\delta^{m}\smlnorm{\mu_{n}f_{(\gamma,\delta)}}=o_{p}(1).\label{eq:growingLLN}
\end{equation}

\end{prop}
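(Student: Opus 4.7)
The strategy is a three-step reduction: truncate the envelope to obtain a uniformly bounded class, partition $\Delta_{n}$ dyadically so that the variance bound in \enuref{varbnd} can be effectively exploited within each slice, and apply a standard maximal inequality for Euclidean classes within each slice combined with a union bound across slices. The weighting factor $\delta^{m}$ inflates each per-slice bound and must be balanced against the contracting variance $\delta^{-1}$ to yield the $\abv{\delta}_{n}^{2m-1}$ rate in the hypothesis.

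First I would set $M_{n}\defeq n^{1/p}$ and decompose $f_{(\gamma,\delta)}=f_{(\gamma,\delta)}\indic\{F\leq M_{n}\}+f_{(\gamma,\delta)}\indic\{F>M_{n}\}$. The tail piece is controlled by Markov's inequality and \enuref{Fp}: $\expect F(z_{0})\indic\{F(z_{0})>M_{n}\}\leq M_{n}^{-(p-1)}\expect F(z_{0})^{p}=O(n^{-(1-1/p)})$, so
\[
\sup_{(\gamma,\delta)\in\Gamma\times\Delta_{n}}\delta^{m}\smlnorm{\mu_{n}(f_{(\gamma,\delta)}\indic\{F>M_{n}\})}=O_{p}(\abv{\delta}_{n}^{m}n^{-(1-1/p)}),
\]
which is $o_{p}(1)$ since the hypothesis together with $m\leq2m-1$ (for $m\geq1$ and $\abv{\delta}_{n}\geq1$) yields $\abv{\delta}_{n}^{m}\leq\abv{\delta}_{n}^{2m-1}=o(n^{1-1/p})$.

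For the truncated piece, I would partition $\Delta_{n}$ into dyadic blocks $\Delta_{n,k}\defeq[2^{k},2^{k+1}]\cap\Delta_{n}$, $k=0,\ldots,K_{n}$, with $K_{n}=\lceil\log_{2}\abv{\delta}_{n}\rceil$. On the $k$-th slice, \enuref{varbnd} gives $\sigma_{k}^{2}\defeq\sup_{\gamma\in\Gamma,\delta\in\Delta_{n,k}}\expect\smlnorm{f_{(\gamma,\delta)}(z_{0})\indic\{F\leq M_{n}\}}^{2}\lesssim 2^{-k}$, while restriction to the slice and truncation by a fixed threshold preserve Euclidean-ness (with envelope $F\pmin M_{n}\leq M_{n}$ and the same polynomial covering-number constants as in \enuref{entropy}). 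A standard Talagrand/Bousquet-type maximal inequality for Euclidean classes (as in Theorem~2.14.16 of \citealp{VVW96}) then yields, with probability at least $1-n^{-2}$,
\[
\sup_{\gamma\in\Gamma,\delta\in\Delta_{n,k}}\smlnorm{\mu_{n}(f_{(\gamma,\delta)}\indic\{F\leq M_{n}\})}\lesssim\frac{2^{-k/2}\sqrt{\log(\abv{\delta}_{n}\pmax n)}}{n^{1/2}}+\frac{M_{n}\log(\abv{\delta}_{n}\pmax n)}{n}.
\]
Multiplying the $k$-th bound by $\sup_{\delta\in\Delta_{n,k}}\delta^{m}\leq2^{(k+1)m}$, union bounding across the $K_{n}=O(\log\abv{\delta}_{n})$ slices (absorbed into the log factor), and using $\max_{k}2^{k(m-1/2)}=\abv{\delta}_{n}^{m-1/2}$ and $\max_{k}2^{km}=\abv{\delta}_{n}^{m}$ (both monotone in $k$ because $m\geq1$), yields
\[
\sup_{(\gamma,\delta)\in\Gamma\times\Delta_{n}}\delta^{m}\smlnorm{\mu_{n}(f_{(\gamma,\delta)}\indic\{F\leq M_{n}\})}\lesssim\sqrt{\frac{\abv{\delta}_{n}^{2m-1}\log(\abv{\delta}_{n}\pmax n)}{n}}+\frac{\abv{\delta}_{n}^{m}n^{1/p}\log(\abv{\delta}_{n}\pmax n)}{n}.
\]
Both summands are $o_{p}(1)$ under the stated growth condition, the second using $\abv{\delta}_{n}^{m}\leq\abv{\delta}_{n}^{2m-1}$.

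The delicate step is the dyadic peeling: the factor $\delta^{m}$ amplifies the per-slice bound by $2^{km}$, and it is essential that the variance within the slice contracts at the rate $2^{-k}$ so that the product $2^{km}\cdot2^{-k/2}=2^{k(m-1/2)}$ remains controlled by $\abv{\delta}_{n}^{m-1/2}$ across all slices—producing precisely the $\abv{\delta}_{n}^{2m-1}$ in the final rate, which matches the hypothesis. Verifying that Euclidean-ness transfers from $\class F$ to each truncated slice with constants independent of $k$ is routine (slices are subclasses of $\class F$ under the same envelope, and a fixed truncation preserves the polynomial covering bound up to a universal constant); the matching of $\log(\abv{\delta}_{n}\pmax n)$ to the hypothesis arises from both the entropy integral and the union bound over the $O(\log n)$ slices.
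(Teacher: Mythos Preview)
Your proposal is correct and follows essentially the same three-step architecture as the paper's proof: truncate the envelope at level $n^{1/p}$, slice $\Delta_{n}$ geometrically so that \enuref{varbnd} delivers a per-slice variance bound $\sigma_{k}^{2}\asymp\delta_{k}^{-1}$, apply a concentration inequality within each slice, and union-bound across the $O(\log\abv{\delta}_{n})$ slices. The only noteworthy difference is the concentration tool: the paper uses a tailored Bernstein-type exponential bound (its \lemref{bernstein}, derived from Theorem~II.37 in \citealp{Pollard84}) rather than a Talagrand-type moment inequality, but both route through the same $\sigma_{k}\asymp\delta_{k}^{-1/2}$, $\tau=n^{1/p}$ inputs and produce the identical $\abv{\delta}_{n}^{2m-1}$ rate.
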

 
\begin{rem}
Suppose $\delta_{n}$ is an $\filt$-measurable sequence for which
$n^{1-1/p}/\delta_{n}^{2m-1}\log(\delta_{n}\pmax n)\inprob\infty$.
Then for every $\epsilon>0$, there exists a deterministic sequence
$\{\abv{\delta}_{n}\}$ satisfying the requirements of \propref{LLN},
and for which $\limsup_{n\goesto\infty}\Prob\{\delta_{n}\leq\abv{\delta}_{n}\}>1-\epsilon$.
Deduce that 
\[
\sup_{\gamma\in\Gamma}\delta_{n}^{m}\smlnorm{\mu_{n}f_{(\gamma,\delta_{n})}}=o_{p}(1).
\]

\end{rem}

The proof requires the following
\begin{lem}
\label{lem:bernstein} Suppose $\class F$ is a (pointwise measurable)
class with envelope $F$, satisfying
\begin{enumerate}
\item \label{enu:unifbound}$\smlnorm F_{\infty}\leq\tau$;
\item \label{enu:unifvar}$\sup_{f\in\class F}\smlnorm f_{2,\Prob}\leq\sigma$;
and
\item \label{enu:unifcover} $\sup_{\Qrob}N(\epsilon\smlnorm F_{1,\Qrob},\class F,L_{1,\Qrob})\leq C\epsilon^{-d}$.
\end{enumerate}
Let $\theta\defeq\tau^{-1/2}\sigma$, $m\in\naturals$ and $x>0$.
Then there exist $C_{1},C_{2}\in(0,\infty)$, not depending on $\tau$,
$\sigma$ or $x$, such that
\begin{equation}
\Prob\left\{ \sigma^{-2}\sup_{f\in\class F}\smlabs{\mu_{n}f}>x\right\} \leq C_{1}\exp[-C_{2}n\theta^{2}(1+x^{2})+d\log(\theta^{-2}x^{-1})]\label{eq:llnbnd}
\end{equation}
for all $n\geq\tfrac{1}{8}x^{-2}\theta^{-2}$.
\end{lem}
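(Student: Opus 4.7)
The plan is to prove this uniform Bernstein-type inequality by combining a pointwise Bernstein concentration bound with an $\epsilon$-net argument, using the polynomial covering-number condition (iii) to produce a finite approximating subfamily whose cardinality is reflected in the $d\log(\theta^{-2}x^{-1})$ entropy correction in the exponent. Conditions (i)--(ii) supply the envelope and variance inputs to Bernstein's inequality on each element of the net.

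Concretely, I would proceed as follows. First, for each fixed $f \in \class F$, Bernstein's inequality together with $\|f\|_\infty \le \tau$ and $\var(f) \le \|f\|_{2,\Prob}^2 \le \sigma^2$ gives
\[
\Prob\{|\mu_n f| > \tfrac12 \sigma^2 x\} \;\le\; 2\exp\!\left(-\tfrac{n\sigma^4 x^2/8}{\sigma^2 + \tau\sigma^2 x/6}\right);
\]
substituting $\sigma^2 = \tau\theta^2$ and splitting into the Gaussian regime ($\tau x \le 6$) and sub-exponential regime ($\tau x > 6$), this can be reduced to a unified bound of the form $2\exp(-C_2' n\theta^2(1+x^2))$, with the sample-size restriction $n \ge \tfrac18 x^{-2}\theta^{-2}$ playing the role of ensuring $n\theta^2 x^2 \ge \tfrac18$ so that the quadratic Bernstein exponent is never swamped by the linear (third-moment) one. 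Second, I would instantiate (iii) at $\Qrob = \Prob_n$ at a scale $\epsilon \asymp \sigma^2 x / \|F\|_{L_1(\Prob_n)}$, producing a net $\{f_j\}_{j=1}^N$ of cardinality $N \le C\epsilon^{-d}$, with the approximation error bounded by
\[
|\mu_n(f - f_{j(f)})| \;\le\; \Prob_n|f-f_{j(f)}| + \Prob|f-f_{j(f)}| \;\le\; \tfrac12\sigma^2 x
\]
on a separately controlled high-probability event on which $\Prob_n F \le 2 \Prob F$ (this event has probability $1 - O(e^{-cn})$ by Markov/Bernstein applied to $F$, and is absorbed into $C_1$). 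Third, a union bound over the $N$ net elements of the single-function Bernstein event contributes a factor whose logarithm is $d\log\epsilon^{-1} \le d\log(\theta^{-2}x^{-1}) + O(1)$, giving the advertised bound.

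The main obstacle will be the careful bookkeeping needed to make the three scales $\sigma^2$ (variance), $\tau$ (envelope), and $\theta^2 = \sigma^2/\tau$ interact correctly across both branches of Bernstein's inequality, so as to yield the unified exponent $-C_2 n\theta^2(1+x^2)$ rather than separate cases. The explicit lower bound $n \ge \tfrac18 x^{-2}\theta^{-2}$ is the mechanism by which this unification succeeds: in its absence, the sub-exponential regime $\tau x > 6$ would only deliver an exponent of order $-n\theta^2 x$, strictly weaker than the claimed $-n\theta^2 x^2$. A secondary technical point is that (iii) is phrased in terms of the random envelope $\|F\|_{1,\Prob_n}$, which forces the additional high-probability step comparing $\Prob_n F$ with $\Prob F$; this is routine provided $F$ has a finite first moment (implicit here, since $\sigma^2 \le \tau \Prob F$ under (i)--(ii)), but must be tracked when choosing the net scale $\epsilon$.
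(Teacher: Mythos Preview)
Your high-level strategy (pointwise exponential bound on a net plus union bound, with the polynomial covering producing the $d\log(\theta^{-2}x^{-1})$ entropy term) is exactly what underlies the paper's proof. The paper's argument is very short: it rescales to $\class G=\{\tau^{-1}f:f\in\class F\}$, so that the envelope $G=\tau^{-1}F$ satisfies $\smlnorm{G}_\infty\le 1$ and $\sup_{g}\smlnorm{g}_{2,\Prob}\le\theta$, observes that $\sigma^{-2}\sup_f\smlabs{\mu_nf}>x$ is equivalent to $\sup_g\smlabs{\mu_ng}>\theta^2x$, and then cites the proof of Theorem~II.37 in Pollard (1984) for the exponential bound on the rescaled class. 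So the substance of your plan matches the paper's, with the paper outsourcing the details to Pollard rather than working them out.

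There is, however, a genuine gap in the way you control the net approximation error. You instantiate (iii) at $\Qrob=\Prob_n$ and then write
\[
\smlabs{\mu_n(f-f_{j(f)})}\le \Prob_n\smlabs{f-f_{j(f)}}+\Prob\smlabs{f-f_{j(f)}}.
\]
The first term is indeed bounded by $\epsilon\smlnorm{F}_{1,\Prob_n}$ from the covering, but the second term $\Prob\smlabs{f-f_{j(f)}}$ is not: the net element $f_{j(f)}$ was selected using the random measure $\Prob_n$, and an $L_1(\Prob_n)$ cover gives you no control over $L_1(\Prob)$ distances. Your proposed high-probability event $\{\Prob_nF\le 2\Prob F\}$ compares the envelope under the two measures, which does not bound $\Prob\smlabs{f-f_{j(f)}}$. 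This is precisely the difficulty that Pollard's argument resolves by symmetrizing first: conditionally on $z_1,\dots,z_n$, the Rademacher process satisfies $\bigl|n^{-1}\sum_i\varepsilon_i(f(z_i)-f_j(z_i))\bigr|\le \Prob_n\smlabs{f-f_j}$ deterministically, so an $L_1(\Prob_n)$ cover suffices with no population term to control. Inserting a symmetrization step (and then applying Hoeffding or Bernstein conditionally on the data to each net element) closes the gap; the paper's rescaling to envelope~$1$ also streamlines the subsequent exponential inequality and avoids the regime-splitting bookkeeping you describe.
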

 
\begin{proof}[Proof of \propref{LLN}]
 We first note that, by \enuref{Fp}, 
\[
\max_{i\leq n}\smlabs{F(z_{i})}=o_{p}(n^{-1/p})
\]
and so, letting $f_{(\gamma,\delta)}^{n}(z)\defeq f_{(\gamma,\delta)}(z)\indic\{F(z)\leq n^{1/p}\}$,
we have
\[
\Prob\left\{ \sup_{(\gamma,\delta)\in\Gamma\times\Delta_{n}}\delta^{m}\smlabs{\mu_{n}[f_{(\gamma,\delta)}-f_{(\gamma,\delta)}^{n}]}=0\right\} \leq\Prob\left\{ \max_{i\leq n}\smlabs{F(z_{i})}>n^{1/p}\right\} =o(1).
\]
It thus suffices to show that \eqref{growingLLN} holds when $f_{(\gamma,\delta)}$
is replaced by $f_{(\gamma,\delta)}^{n}$. Since \enuref{varbnd}
and \enuref{entropy} continue to hold after this replacement, it
suffices to prove \eqref{growingLLN} when \enuref{Fp} is replaced
by the condition that $\smlnorm F_{\infty}\leq n^{1/p}$, which shall
be maintained throughout the sequel. (The dependence of $f$ and $F$
upon $n$ will be suppressed for notational convenience.)

Letting $\delta_{k}\defeq\e^{k}$, define $\Delta_{nk}\defeq[\delta_{k},\delta_{k+1}\pmin\abv{\delta}_{n}]$
for $k\in\{0,\ldots,K_{n}\}$, where $K_{n}\defeq\log\abv{\delta}_{n}$;
observe that $\Delta_{n}=\Union_{k=0}^{K_{n}}\Delta_{nk}$. Set
\[
\class F_{nk}\defeq\{z\elmap f_{(\gamma,\delta)}(z)\mid(\gamma,\delta)\in\Gamma\times\Delta_{nk}\}
\]
and note that $\smlnorm F_{\infty}\leq n^{1/p}$ and $\sup_{f\in\class F_{nk}}\smlnorm f_{2,\Prob}\leq\delta_{k}^{-1/2}$.
Under \enuref{entropy}, we may apply apply \lemref{bernstein} to
each $\class F_{nk}$, with $(\tau,\sigma)=(n^{1/p},\delta_{k}^{-1/2})$
and $x=\delta_{k}^{1-m}\epsilon$, for some $\epsilon>0$. There thus
exist $C_{1},C_{2}\in(0,\infty)$ depending on $\epsilon$ such that
\begin{align}
\Prob\left\{ \sup_{(\gamma,\delta)\in\Gamma\times\Delta_{n}}\delta^{m}\smlabs{\mu_{n}f_{(\gamma,\delta)}}>\epsilon\right\}  & \leq\sum_{k=0}^{K_{n}}\Prob\left\{ \delta_{k}^{m}\sup_{(\gamma,\delta)\in\Gamma\times\Delta_{nk}}\smlabs{\mu_{n}f_{(\gamma,\delta)}}>\e^{-1}\epsilon\right\} \nonumber \\
 & \leq C_{1}\sum_{k=0}^{K_{n}}\exp[-C_{2}n\theta_{nk}^{2}\delta_{k}^{2(1-m)}+d\log(\theta_{nk}^{-2}\delta_{k}^{m-1})]\label{eq:ullnbnd}
\end{align}
where $\theta_{nk}\defeq n^{-1/2p}\delta_{k}^{-1/2}$, provided 
\begin{equation}
n\geq\tfrac{1}{8}\delta_{k}^{2(m-1)}\theta_{nk}^{-2}\epsilon^{-2}\sep\forall k\in\{0,\ldots,K_{n}\}\impliedby n^{1-1/p}/\abv{\delta}_{n}^{2m-1}\geq\tfrac{1}{8}\epsilon^{-2},\label{eq:ncond}
\end{equation}
which holds for all $n$ sufficiently large. In obtaining \eqref{ncond}
we have used $\delta_{k}\leq\abv{\delta}_{n}$ and $\theta_{nk}\geq n^{-1/2p}\abv{\delta}_{n}^{-1/2}$,
and these further imply that \eqref{ullnbnd} may be bounded by 
\[
C_{1}(\log\abv{\delta}_{n})\exp[-C_{2}n^{1-1/p}\abv{\delta}_{n}^{-2m-1}(1+\epsilon^{2})+d\log(\abv{\delta}_{n}^{m}n^{1/p})]\goesto0
\]
as $n\goesto\infty$. Thus \eqref{growingLLN} holds.
\end{proof}
 
\begin{proof}[Proof of \lemref{bernstein}]
 Suppose \enuref{unifcover} holds. Define $\class G\defeq\{\tau^{-1}f\mid f\in\class F\}$,
and $G\defeq\tau^{-1}F$. Then
\[
\sup_{g\in\class G}\smlnorm g_{2,\Prob}\leq\tau^{-1}\sup_{f\in\class F}\smlnorm f_{2,\Prob}\leq\tau^{-1/2}\sigma\eqdef\theta;
\]
$\smlnorm g_{\infty}\leq1$ for all $g\in\class G$; and since $\smlnorm{G_{n}}_{1,\Qrob}\leq1$,
$N(\epsilon,\class G,L_{1,\Qrob})\leq C\epsilon^{-d}$. Hence, by
arguments given in the proof of Theorem~II.37 in \citet{Pollard84},
there exist $C_{1},C_{2}>0$, depending on $x$, such that
\[
\Prob\left\{ \sigma^{-2}\sup_{f\in\class F}\smlabs{\mu_{n}f}>x\right\} =\Prob\left\{ \sup_{g\in\class G}\smlabs{\mu_{n}g}>\theta^{2}x\right\} \leq C_{1}\exp[-C_{2}n\theta^{2}(1+x^{2})+d\log(\theta^{-2}x^{-1})]
\]
for all $n\geq\tfrac{1}{8}x^{-2}\theta^{-2}$.
\end{proof}
\cleartooddpage

\section{Index of key notation\label{app:notation}}

\newref{ass}{name = Ass.~}

\newref{sub}{name = Sec.~}

\newref{sec}{name = Sec.~}

\newref{rem}{name = Rem.~}

\newref{thm}{name = Thm.~}

\newref{prop}{name = Prop.~}

\newref{app}{name = App.~}

\subsubsection*{Greek and Roman symbols}

Listed in (Roman) alphabetical order. Greek symbols are listed according
to their English names: thus $\Omega$, as `omega', appears before
$\theta$, as `theta'.

\noindent %
\begin{longtable}[l]{>{\raggedright}p{0.13\textwidth}>{\raggedright}p{0.67\textwidth}l}
$\beta$, $\beta_{0}$, $\Beta$ & structural model parameters, true value, parameter space \cellfill & \secref{model}\tabularnewline[\doublerulesep]
$\hat{\beta}_{nk}^{e}$ & GII estimator; near-minimizer of $Q_{nk}^{e}$ \cellfill & \subref{giidist}\tabularnewline[\doublerulesep]
$\abv{\beta}_{nk}^{e}(\beta^{(0)},r)$ & terminal value for routine $r$ started at $\beta^{(0)}$ \cellfill & \eqref{terminal}\tabularnewline[\doublerulesep]
$c_{n}$ & tuning sequence in the definition of $R_{nk}^{e}$ \cellfill & \subref{optperform}\tabularnewline[\doublerulesep]
$d_{\beta}$, $d_{\theta}$, \ldots{} & dimensionality of $\beta$, $\theta$, etc. \cellfill & \subref{ii}\tabularnewline[\doublerulesep]
$\expect_{n}f$ & sample average, $\frac{1}{n}\sum_{i=1}^{n}f(z_{i})$ \cellfill & \appref{parttwoproof}\tabularnewline[\doublerulesep]
$\eta_{it}$ & stochastic components of the structural model \cellfill & \secref{model}\tabularnewline[\doublerulesep]
$\filt$ & $\sigma$-field supporting all observed and simulated variates \cellfill & \subref{general}\tabularnewline[\doublerulesep]
$G$ & Jacobian of the population binding function \cellfill & \subref{giidist}\tabularnewline[\doublerulesep]
$G_{n}(\beta)$ & Jacobian of the smoothed sample binding function \cellfill & \remref{cvgconditions}\tabularnewline[\doublerulesep]
$\gamma(\beta)$ & (re-)parametrizes the structural model \cellfill & \eqref{vwidx}\tabularnewline[\doublerulesep]
$\gamma_{rk}$ & jackknifing weights \cellfill & \eqref{kextrap}\tabularnewline[\doublerulesep]
$H$ & auxiliary model (population) log-likelihood Hessian \cellfill & \subref{giidist}\tabularnewline[\doublerulesep]
$J$ & total number of alternatives \cellfill & \secref{model}\tabularnewline[\doublerulesep]
$k$ & order of jackknifing (unless otherwise defined) \cellfill & \enuref{R:jackknifing}\tabularnewline[\doublerulesep]
$k_{0}$ & maximum order (less $1$) of differentiability of $\beta\elmap\theta(\beta,\lambda)$
\cellfill & \enuref{R:uniqueness}\tabularnewline[\doublerulesep]
$K$, $K_{\lambda}$ & smoothing kernel, $K_{\lambda}(x)\defeq K(\lambda^{-1}x)$ \cellfill & \eqref{ytilde}\tabularnewline[\doublerulesep]
$\kprod$, $\kprod_{s}$, $\kprod_{st}$ & product of kernel-type functions \cellfill & \appref{ullnproof}\tabularnewline[\doublerulesep]
$\like(y_{i},x_{i};\theta)$ & $i$th contribution to auxiliary model log-likelihood \cellfill & \eqref{thetahat}\tabularnewline[\doublerulesep]
$\like(\beta,\lambda;\theta)$ & abbreviates $\like(y_{i}(\beta,\lambda),x_{i};\theta)$ & \subref{general}\tabularnewline[\doublerulesep]
$\ell^{\infty}(D)$ & space of bounded functions on the set $D$ \cellfill & \enuref{H:stocheq}\tabularnewline[\doublerulesep]
$\Like_{n}(y,x;\theta)$ & auxiliary model average log-likelihood \cellfill & \eqref{thetahat}\tabularnewline[\doublerulesep]
$\lambda$, $\Lambda$ & smoothing parameter, set of allowable values \cellfill & \subref{proposal}\tabularnewline[\doublerulesep]
$m$ & indexes the simulated dataset; $m=0$ denotes the data \cellfill & \subref{ii}\tabularnewline[\doublerulesep]
$M$ & total number of simulations \cellfill & \subref{ii}\tabularnewline[\doublerulesep]
$\mu_{n}f$ & centered sample average, $\frac{1}{n}\sum_{i=1}^{n}[f(z_{i})-\expect f(z_{i})]$
\cellfill & \appref{parttwoproof}\tabularnewline[\doublerulesep]
$n$  & total number of individuals \cellfill & \secref{model}\tabularnewline[\doublerulesep]
$N(\theta,\epsilon)$ & open ball of radius $\epsilon$ centered at $\theta$ \cellfill & \appref{prelimlem}\tabularnewline[\doublerulesep]
$\vidx_{r}(z;\beta)$ & linear index in structural model \cellfill & \eqref{vidx}\tabularnewline[\doublerulesep]
$\widx_{r}(z;\beta)$ & linear index in structural model \cellfill & \eqref{vidx}\tabularnewline[\doublerulesep]
$\Omega(U,V)$ & variance matrix function \cellfill & \eqref{Omega}\tabularnewline[\doublerulesep]
$p_{0}$ & order of moments possessed by model variates \cellfill & \enuref{L:moment}\tabularnewline[\doublerulesep]
$\phi_{n}^{m}$, $\phi^{m}$ & standardized auxiliary sample score and its weak limit \cellfill & \eqref{psitophi}\tabularnewline[\doublerulesep]
$\psi_{n}^{m}$, $\psi^{m}$ & centered auxiliary estimator process and its weak limit \cellfill & \enuref{H:stocheq}\tabularnewline[\doublerulesep]
$Q_{nk}^{e}$ & sample criterion for estimator $e$ (jackknifed) \cellfill & \subref{bias}\tabularnewline[\doublerulesep]
$Q_{k}^{e}$ & large-sample (unsmoothed) limit of $Q_{nk}^{e}$; note $Q_{k}^{e}=Q^{e}$
\cellfill & \subref{bias}\tabularnewline[\doublerulesep]
$\covmat$ & auxiliary model score covariance, $\expect\dot{\like}_{i}^{m}(\theta_{0})\dot{\like}_{i}^{m^{\prime}}(\theta_{0})^{\trans}$
for $m^{\prime}\neq m$ & \eqref{scorecovar}\tabularnewline[\doublerulesep]
$R_{nk}^{e}$, $R^{e}$ & set of near-roots of $Q_{nk}^{e}$, exact roots of $Q^{e}$ \cellfill & \subref{optperform}\tabularnewline[\doublerulesep]
$\varrho_{\min}(A)$ & smallest eigenvalue of symmetric matrix $A$ \cellfill & \subref{general}\tabularnewline[\doublerulesep]
$S_{nk}^{e}$, $S^{e}$ & subset of $R_{nk}^{e}$, $R^{e}$ satisfying second-order conditions
\cellfill & \eqref{S}\tabularnewline[\doublerulesep]
$\sigma_{\min}(B)$ & smallest singular value of matrix $B$ \cellfill & \subref{convergence}\tabularnewline[\doublerulesep]
$\varmat$ & auxiliary model score variance, $\expect\dot{\like}_{i}^{m}(\theta_{0})\dot{\like}_{i}^{m}(\theta_{0})^{\trans}$
\cellfill & \eqref{scorecovar}\tabularnewline[\doublerulesep]
$T$ & total number of time periods \cellfill & \secref{model}\tabularnewline[\doublerulesep]
$\theta$, $\Theta$ & auxiliary model parameters, parameter space \cellfill & \subref{ii}\tabularnewline[\doublerulesep]
$\theta_{0}$ & pseudo-true parameters implied by $\beta_{0}$ \cellfill & \subref{ii}\tabularnewline[\doublerulesep]
$\hat{\theta}_{n}$ & data-based estimate of $\theta$ \cellfill & \subref{ii}\tabularnewline[\doublerulesep]
$\hat{\theta}_{n}^{m}(\beta,\lambda)$ & simulation-based estimate of $\theta$ \cellfill & \eqref{simlest}\tabularnewline[\doublerulesep]
$\theta^{k}(\beta,\lambda)$ & population binding function (smoothed, jackknifed) \cellfill & \eqref{kextrap}\tabularnewline[\doublerulesep]
$\abv{\theta}_{n}^{k}(\beta,\lambda)$ & sample binding function (smoothed, jackknifed) \cellfill & \eqref{jackknifed}\tabularnewline[\doublerulesep]
$u_{itj}$ & utility of individual $i$ from alternative $j$ in period $t$ \cellfill & \secref{model}\tabularnewline[\doublerulesep]
$u_{itj}^{m}(\beta)$ & simulated utilities at $\beta$ \cellfill & \subref{proposal}\tabularnewline[\doublerulesep]
$U_{e}$ & ``Hessian'' component of limiting variance \cellfill & \eqref{UVe}\tabularnewline[\doublerulesep]
$V_{e}$ & ``score'' component of limiting variance \cellfill & \eqref{UVe}\tabularnewline[\doublerulesep]
w.p.a.1 & with probability approaching one \cellfill & \thmref{rootdist}\tabularnewline[\doublerulesep]
$W_{r}(z)$ & envelope for $\widx_{r}(z;\beta)$ \cellfill & \subref{general}\tabularnewline[\doublerulesep]
$W_{n}$, $W$ & Wald weighting matrix and its probability limit \cellfill & \subref{ii}\tabularnewline[\doublerulesep]
$x_{it}$ & exogenous covariates for individual $i$ in period $t$ \cellfill & \secref{model}\tabularnewline[\doublerulesep]
$y_{itj}$ & set $=1$ if individual $i$ chooses $j$ in period $t$ \cellfill & \secref{model}\tabularnewline[\doublerulesep]
$y_{itj}^{m}(\beta,\lambda)$ & smoothed simulated choice indicators at $\beta$ \cellfill & \subref{proposal}\tabularnewline[\doublerulesep]
$z_{i}^{m}$ & collects $x_{i}$ and $\eta_{i}^{m}$ \cellfill & \subref{general}\tabularnewline[\doublerulesep]
\end{longtable}

\subsubsection*{Symbols not connected to Greek or Roman letters}

Ordered alphabetically by their description.

\noindent %
\begin{longtable}[l]{>{\raggedright}p{0.13\textwidth}>{\raggedright}p{0.67\textwidth}l}
$\wkc$ & weak convergence (\citealp{VVW96}) \cellfill & \ref{enu:H:stocheq}\tabularnewline[\doublerulesep]
$\inprob$ & convergence in probability \cellfill & \secref{asymptotics}\tabularnewline[\doublerulesep]
$\smlnorm x$, $\smlnorm x_{A}$ & Euclidean norm, $A$-weighted norm of $x$ \cellfill & \subref{ii}\tabularnewline[\doublerulesep]
$\dot{f}$, $\ddot{f}$ & gradient, hessian of $f$ \cellfill & \subref{giidist}\tabularnewline[\doublerulesep]
$\partial_{\beta}f$, $\partial_{\beta}^{2}f$ & gradient, hessian of $f$ w.r.t.\ $\beta$ \cellfill & \remref{reparam}\tabularnewline[\doublerulesep]
$\lesssim$ & left side bounded by the right side times a constant \cellfill & \appref{parttwoproof}\tabularnewline[\doublerulesep]
$\smlnorm f_{p,\Qrob}$ & $L^{p}(\Qrob)$ norm of $f$, i.e.\ $(\expect_{\Qrob}\smlabs{f(z_{i})}^{p})^{1/p}$
\cellfill & \appref{parttwoproof}\tabularnewline[\doublerulesep]
\end{longtable}
\end{document}